\documentclass{article}

\AtEndDocument{

\bigskip{\footnotesize%
Y.~Harpaz: \textsc{Radboud Universiteit Nijmegen, Institute for
Mathematics, Astrophysics, and Particle Physics, Heyendaalseweg 135, 6525
AJ Nijmegen, The Netherlands} \par  
  \textit{E-mail address:} \texttt{y.harpaz@math.ru.nl} \par

  \addvspace{\medskipamount}
M.~Prasma: \textsc{Radboud Universiteit Nijmegen, Institute for
Mathematics, Astrophysics, and Particle Physics, Heyendaalseweg 135, 6525
AJ Nijmegen, The Netherlands} \par
  \textit{E-mail address:}  \texttt{mtnprsm@gmail.com}
}}

\usepackage[nottoc,numbib]{tocbibind}
\usepackage[centertags]{amsmath}
\usepackage{hyperref}
\usepackage{amsfonts}

\usepackage{MnSymbol}

\usepackage{amsthm}

\usepackage{newlfont}
\usepackage{amscd}
\usepackage{amsmath}
\usepackage{enumerate}
\usepackage[all,2cell]{xy}
\UseAllTwocells
\input xy
\xyoption{2cell}
\xyoption{all}

\usepackage{verbatim}
\usepackage{eucal}






\newcommand{\Cof}{\mathcal{C}of}
\newcommand{\Fib}{\mathcal{F}ib}


\newcommand{\NN}{\mathbb{N}}
\newcommand{\GG}{\mathfrak{G}}

\renewcommand{\phi}{\varphi}

\def\B{\mathbb{B}}
\def\C{\mathcal{C}}

\def\G{\mathcal{G}}

\def\F{\mathcal{F}}
\def\SS{\mathcal{S}}
\def\M{\mathcal{M}}
\def\N{\mathcal{N}}
\def\W{\mathcal{W}}
\def\I{\mathcal{I}}
\def\SS{\mathcal{S}}

\def\P{\mathcal{P}}
\def\J{\mathcal{J}}
\def\R{\mathcal{R}}
\renewcommand{\L}{\mathcal{L}}
\def\U{\mathcal{U}}
\def\V{\mathcal{V}}
\def\W{\mathcal{W}}


\def\sig{{\sigma}}

\def\Om{{\Omega}}

\def\Del{{\Delta}}
\def\Sig{{\Sigma}}

\def\Lam{{\Lambda}}
\def\vphi{\varphi}
\def\l{\left}
\def\r{\right}


\newcommand{\HSwarrow}{\kern0.05ex\vcenter{\hbox{\Huge\ensuremath{\Swarrow}}}\kern0.05ex}
\newcommand{\hSwarrow}{\kern0.05ex\vcenter{\hbox{\huge\ensuremath{\Swarrow}}}\kern0.05ex}
\newcommand{\LLSwarrow}{\kern0.05ex\vcenter{\hbox{\LARGE\ensuremath{\Swarrow}}}\kern0.05ex}
\newcommand{\LSwarrow}{\kern0.05ex\vcenter{\hbox{\Large\ensuremath{\Swarrow}}}\kern0.05ex}

\newcommand{\HSearrow}{\kern0.05ex\vcenter{\hbox{\Huge\ensuremath{\Searrow}}}\kern0.05ex}
\newcommand{\hSearrow}{\kern0.05ex\vcenter{\hbox{\huge\ensuremath{\Searrow}}}\kern0.05ex}
\newcommand{\LLSearrow}{\kern0.05ex\vcenter{\hbox{\LARGE\ensuremath{\Searrow}}}\kern0.05ex}
\newcommand{\LSearrow}{\kern0.05ex\vcenter{\hbox{\Large\ensuremath{\Searrow}}}\kern0.05ex}

\newcommand{\HDownarrow}{\kern0.05ex\vcenter{\hbox{\Huge\ensuremath{\Downarrow}}}\kern0.05ex}
\newcommand{\hDownarrow}{\kern0.05ex\vcenter{\hbox{\huge\ensuremath{\Downarrow}}}\kern0.05ex}
\newcommand{\LLDownarrow}{\kern0.05ex\vcenter{\hbox{\LARGE\ensuremath{\Downarrow}}}\kern0.05ex}
\newcommand{\LDownarrow}{\kern0.05ex\vcenter{\hbox{\Large\ensuremath{\Downarrow}}}\kern0.05ex}

\newcommand{\HUparrow}{\kern0.05ex\vcenter{\hbox{\Huge\ensuremath{\Uparrow}}}\kern0.05ex}
\newcommand{\hUparrow}{\kern0.05ex\vcenter{\hbox{\huge\ensuremath{\Uparrow}}}\kern0.05ex}
\newcommand{\LLUparrow}{\kern0.05ex\vcenter{\hbox{\LARGE\ensuremath{\Uparrow}}}\kern0.05ex}
\newcommand{\LUparrow}{\kern0.05ex\vcenter{\hbox{\Large\ensuremath{\Uparrow}}}\kern0.05ex}

\newtheorem{thm}{Theorem}[subsection]
\newtheorem{cor}[thm]{Corollary}
\newtheorem{lem}[thm]{Lemma}
\newtheorem{pro}[thm]{Proposition}

\theoremstyle{definition}
\newtheorem{define}[thm]{Definition}

\newtheorem{example}[thm]{Example}
\newtheorem{defn}[thm]{Definition}

\newtheorem{notn}[thm]{Notation}

\theoremstyle{remark}
\newtheorem{rem}[thm]{Remark}

\numberwithin{equation}{subsection}

\DeclareMathOperator{\holim}{holim}

\DeclareMathOperator{\Id}{Id}
\DeclareFontFamily{OT1}{pzc}{}
\DeclareFontShape{OT1}{pzc}{m}{it}{<-> s * [1.10] pzcmi7t}{}
\DeclareMathAlphabet{\mathpzc}{OT1}{pzc}{m}{it}
\DeclareMathOperator{\cof}{cof}
\DeclareMathOperator{\fib}{fib}

\DeclareMathOperator{\res}{res}

\DeclareMathOperator{\ModCat}{ModCat}
\DeclareMathOperator{\sGr}{sGr}
\DeclareMathOperator{\sS}{s\mathcal{S}}
\DeclareMathOperator{\diag}{diag}
\DeclareMathOperator{\op}{op}
\DeclareMathOperator{\Map}{Map}
\DeclareMathOperator{\red}{red}
\DeclareMathOperator{\inj}{inj}
\DeclareMathOperator{\df}{def}
\DeclareMathOperator{\Set}{Set}

\DeclareMathOperator{\Cat}{Cat}

\DeclareMathOperator{\Fun}{Fun}
\DeclareMathOperator{\AdjCat}{AdjCat}
\DeclareMathOperator{\Obj}{Obj}

\DeclareMathOperator{\P_n}{P_n}
\DeclareMathOperator{\racts}{\curvearrowright}

\DeclareMathOperator{\der}{h}
\DeclareMathOperator{\cosk}{cosk}
\DeclareMathOperator{\seg}{seg}
\DeclareMathOperator{\Seg}{\left(\sS_0\right)_{\seg}}
\DeclareMathOperator{\Act}{act}
\DeclareMathOperator{\sk}{sk}
\DeclareMathOperator{\equ}{equ}
\DeclareMathOperator{\WW}{W}
\DeclareMathOperator{\Mon}{Mon}

\DeclareMathOperator{\sMon}{sMon}
\DeclareMathOperator{\sSet}{sSet}
\DeclareMathOperator{\sGp}{sGp}
\DeclareMathOperator{\act}{act}
\DeclareMathOperator{\segal}{seg}

\DeclareMathOperator{\Sp}{Sp}

\def\x{\overset}



\newcommand{\tgpd}{\kern0.05ex\vcenter{\hbox{\footnotesize\ensuremath{2}}}\kern0.05ex\mathcal{G}pd} 



\def\rar{\rightarrow}

\def\lrar{\longrightarrow}

\def\hrar{\hookrightarrow}


\def\ovl{\overline}



\newcommand\ackname{Acknowledgements:}
\if@titlepage
  \newenvironment{acknowledgements}{%
      \titlepage
      \null\vfil
      \@beginparpenalty\@lowpenalty
      \begin{center}
        \bfseries \ackname\
        \@endparpenalty\@M
      \end{center}}%
     {\par\vfil\null\endtitlepage}
\else
  
\fi

\title{An integral model structure and truncation theory for coherent group actions}

\author{Yonatan Harpaz \;\;\;\; Matan Prasma}

\date{}
\begin{document}
\maketitle

\begin{abstract}
In this work we study the homotopy theory of coherent group actions from a global point of view, where we allow both the group and the space acted upon to vary. Using the model of Segal group actions and the model categorical Grothendieck construction we construct a model category encompassing all Segal group actions simultaneously. We then prove a global rectification result in this setting. We proceed to develop a general truncation theory for the model-categorical Grothendieck construction and apply it to the case of Segal group actions. We give a simple characterization of $n$-truncated Segal group actions and show that every Segal group action admits a convergent Postnikov tower.
%
\end{abstract}


\tableofcontents

\section{Introduction} 

Let $\SS$ be the category of simplicial sets (which we shall refer to as \textbf{spaces}) and let $G$ be a simplicial group. Homotopy theories of spaces equipped with an action of $G$ are of fundamental interest in algebraic topology. The robust machinery of modern homotopy theory can be applied to this theory in various ways. For example, one can form the \textbf{Borel model category} $\SS^{\B G}$ (where $\B G$ is the simplicial groupoid with one object and automorphism space $G$), whose objects are spaces equipped with an action of $G$ and whose weak equivalences are the $G$-equivariant maps which induce a weak equivalence on the underlying spaces. The category $\SS^{\B G}$ is a strict model for this theory -- each object in $\SS^{\B G}$ corresponds to an actual space equipped with an honest $G$-action. Alternatively, if one wishes to work in a non-strict setting,  one can replace $G$ with its classifying space $\ovl{W}(G)$ and use the model category $\SS_{/\ovl{W}(G)}$ of spaces over $\ovl{W}(G)$ (see~\cite{DDK})

In algebraic topology one often wishes to study group actions for several different groups simultaneously. This setup should combine the homotopy theory of simplicial groups and the homotopy theory of the various equivariant spaces in a compatible way. In a previous paper~\cite{HP}, the authors provide a general machinery to study this setup through a suitable model category. More specifically, given a family $\F$ of model categories parametrized by a model category $\M$, the authors construct a model structure on the Grothendieck construction $\int_\M\F \lrar \M$ called the \textbf{integral model structure}. This model structure combines the model structures of $\M$ and the various fibers $\F(A)$ in a coherent fashion and provides a model for the corresponding $\infty$-categorical Grothendieck construction. Moreover, the integral model structure is shown to be invariant (up to Quillen equivalence) under replacing the pair $(\M,\F)$ with a suitably Quillen equivalent one.

In~\cite{HP}, the authors provide two examples of integral model structures in the setting of group actions. In the first model one integrates the functor which assigns to each simplicial group $G$ the Borel model category $\SS^{\B G}$. In the second model one replaces simplicial groups with the equivalent model of reduced simplicial sets, under which a simplicial group $G$ is modelled by its classifying space $\ovl{W}(G)$. One can then integrate the functor which assigns to each reduced simplicial set $B$ the model category of spaces over $B$. The invariance of the integral model structure alluded to above is then used to show that the resulting model categories are Quillen equivalent. We refer to such integral model categories as \textbf{global homotopy theories} for group actions.

The two models described above both have advantages and disadvantages. In the strict case one has direct access to the simplicial group $G$ and the $G$-space $X$. However, working only with strict models makes it difficult to form homotopical constructions which only preserve Cartesian products up to homotopy. On the other hand, the weak model of spaces over $\ovl{W}(G)$ is flexible and amenable to homotopical constructions, but does not give a direct access to the actual group $G$ or the underlying space on which it acts. 

There exists a third model for groups and group actions which enjoys the advantages of both worlds. A famous result of Segal (see \cite[Proposition 1.5]{Seg}) essentially shows that the homotopy theory of simplicial groups is equivalent to the homotopy theory of \textbf{Segal groups} (called \textbf{special $\Del$-spaces} in~\cite{Seg}), a model in which the group structure is encoded in a homotopy coherent way. It is tempting to try and extend the correspondence between simplicial groups and Segal groups into group actions. This was indeed done in \cite{Pra} where the author defines the notion of a \textbf{Segal group action} over a fixed Segal group $A_\bullet$ and constructs a model category for such objects. This model category is then shown to be Quillen equivalent to the Borel model category $\SS^{\B G}$, where $G$ is a simplicial group model for $A_\bullet$.

In this paper, we will adapt the construction of~\cite{Pra} to the setting of the integral model structure. This will require setting up a good model category for Segal groups, and showing that the functor which associates to each Segal group its model category of Segal group actions can be integrated in the sense discussed above. We will then prove that the resulting integral model category is equivalent to the two models constructed in~\cite{HP}. This can be viewed as a global rectification result for Segal group actions.

Finally, we study truncation theory in integral model categories and characterize $n$-truncated objects and $n$-truncation maps in terms of their counterparts in the base and in the fibers. We apply these results to case of Segal group actions where truncations are shown to take a particularly nice form. We will show that every Segal group action admits a convergent Postnikov tower. This gives a canonical filtration on any Segal group action, which can be used to obtain a corresponding filtration on strict group actions as well. We expect these constructions to have interesting applications in the future.


%
%
%

\section{Preliminaries }\label{s:prelim}

Throughout, a \textbf{space} will always mean a simplicial set and we denote by $\SS$ the category of spaces.

\subsection{Model categories}\label{ss:model}

We shall use an adjusted version of Quillen's original definition of a (closed) model category (see~\cite{Qui}).

\begin{defn}
A \textbf{model category} $\M$ is category with three distinguished classes of morphisms 
$\W=\W_{\M},\F ib=\F ib_{\M}$ and $\C of=\C of_{\M}\;$ called \textbf{weak equivalences}, \textbf{fibrations} and \textbf{cofibrations} (respectively), satisfying the following axioms:
\begin{enumerate}[MC1]
\item  The category $\M$ is complete and cocomplete.
\item  Each of the classes $\W,\Fib$ and $\Cof$ contains all isomorphisms and is closed under composition and retracts.
\item  If $f,g$ are composable maps such that two of $f,g$ and $gf$ are in $\W$ then so is the third. 
\item  Given the commutative solid diagram in $\M$ 
$$\xymatrix{A\ar[d]_{i}\ar[r] & X\ar[d]^p\\ B\ar[r]\ar@{-->}[ur] & Y\\}$$ in which $i\in \Cof$ and $p\in \Fib$, a dashed arrow exists if either $i$ or $p$ are in $\W$.
\item Any map $f$ in $\M$ has two functorial factorizations:
\begin{enumerate}[(i)]
\item $f=pi$ with $i\in \Cof$ and $p\in \Fib\cap \W$;
\item $f=qj$ with $j\in \Cof \cap \W$ and $q\in \Fib$.
\end{enumerate}
\end{enumerate} 

We shall refer to the maps in $\Fib\cap \W$ (resp. $\Cof\cap \W$) as \textbf{trivial fibrations} (resp. \textbf{trivial cofibrations}). For an object $X\in \M$ we denote by $X^{\fib}$ (resp. $X^{\cof}$) the functorial fibrant (resp. cofibrant) replacement of $X$, obtained by factorizing the map to the terminal object $X\lrar *$ (resp. from the initial object $\emptyset \lrar X$) into a trivial cofibration followed by a fibration (resp. a cofibration followed by trivial fibration). 
\end{defn}

\begin{defn}(cf. \cite[Definition $2.16$]{HP})
We denote by $\ModCat$ the $(2,1)$-category whose objects are the model categories and whose morphisms the Quillen pairs (composition is done in the direction of the left Quillen functor). The $2$-morphisms are given by the pseudo-natural isomorphisms of (Quillen) adjunctions.
\end{defn}

\subsection{The integral model structure}\label{ss:integral}

In this subsection we will recall the construction of the integral model structure on the Grothendieck construction of a diagram of model categories indexed by a model category (as developed in~\cite{HP}).
 
Suppose $\M$ is a model category and $\F:\M\rar \ModCat$ a (pseudo-)functor. For a morphism $f:A\lrar B$ in $\M$, we denote the associated adjunction in $\ModCat$ by
$$\xymatrix{f_{!}:\F(A)\ar[r]<1ex> & \F(B):f^*\ar[l]<1ex>_{\upvdash}.}$$
Recall that an object of the Grothendieck construction $\int_{\M} \F$ is a pair $(A,X)$ where $A\in \Obj\M$ and $X\in \Obj\F(A)$ and a morphism $(A,X)\rar (B,Y)$ in $\int_{\M} \F$ is a pair $(f,\phi)$ where $f:A\rar B$ is a morphism in $\M$ and $\phi:f_{!}X\rar Y$ is a morphism in $\F(B)$. In this case, we denote by $\phi^{ad}:X\rar f^*Y$ the adjoint map of $\phi$.
\begin{defn}\label{d:model}
Call a morphism $(f,\phi):(A,X)\rar (B,Y)$ in $\int_{\M} \F$
\begin{enumerate}
\item a \textbf{weak equivalence} if $f:A\rar B$ is a weak equivalence in $\M$ and the composite $f_{!}(X^{cof})\rar f_{!}X\rar Y$ is a weak equivalence in $\F(B)$;
\item a \textbf{fibration} if $f:A\rar B$ is a fibration and $\phi^{ad}:X\rar f^*Y$ is a fibration in $\F(A)$;
\item a \textbf{cofibration} if $f:A\rar B$ is a cofibration in $\M$ and $\phi:f_{!}X\rar Y$ is a cofibration in $\F(B)$.
\end{enumerate}
We denote these classes by $\W$, $\Fib$ and $\Cof$ respectively.
\end{defn}

\begin{defn}\label{d:relative}
We will say that a functor $\F: \M \lrar \ModCat$ is \textbf{relative} if for every weak equivalence $f:A\lrar B$ in $\M$, the associated Quillen pair $f_{!}\dashv f^*$ is a Quillen equivalence.
\end{defn}

\begin{defn}\label{d:proper}
Let $\M$ be a model category and $\F:\M\rar \ModCat$ a functor. We shall say that $\F$ is \textbf{proper} if whenever  $f:A \lrar B$ is a trivial cofibration in $\M$ the associated left Quillen functor preserves weak equivalences, i.e., $f_{!}(\W_{\F(A)})\subseteq \W_{\F(B)}$ and whenever $f:A \lrar B$ is a trivial fibration in $\M$ the associated right Quillen functor preserves weak equivalences, i.e., $f^*(\W_{\F(A)})\subseteq \W_{\F(B)}$.
\end{defn}

\begin{lem}(\cite[$3.8$]{HP})\label{characterization}
Let $\F:\M \lrar \ModCat$ be a proper relative functor.
\begin{enumerate}[(i)]
\item A morphism $(f,\phi):(A,X)\lrar (B,Y)$ is in $\Cof \cap \W$ if and only if $f:A\lrar B$ is a trivial cofibration and $\phi:f_{!}X\lrar Y$ is a trivial cofibration in $\F(B)$;
\item A morphism $(f,\phi):(A,X)\lrar (B,Y)$ is in $\Fib \cap \W$ if and only if $f:A\lrar B$ is a trivial fibration and $\phi^{ad}:X\lrar f^*Y$ is a trivial fibration in $\F(A)$.
\end{enumerate}
\end{lem}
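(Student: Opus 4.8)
The plan is to prove both statements by combining the defining conditions of the three classes (Definition \ref{d:model}) with the characterization of the relevant maps in the fibers, exploiting properness and relativity to pass between ``weak equivalence in the fiber over $A$'' and ``weak equivalence in the fiber over $B$'' along the Quillen pair $f_!\dashv f^*$. Since $\Cof\cap\W$ and $\Fib\cap\W$ are the classes detected by lifting against $\Fib$ and $\Cof$ respectively, the cleanest route is to first establish (i), then deduce (ii) by a retract/lifting argument, or to prove each directly from the definitions; I will sketch the direct approach.

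For (i), suppose first that $(f,\phi)$ lies in $\Cof\cap\W$. Then by Definition \ref{d:model} $f$ is a cofibration, $\phi\colon f_!X\to Y$ is a cofibration in $\F(B)$, and the composite $f_!(X^{\cof})\to f_!X\xrightarrow{\phi} Y$ is a weak equivalence in $\F(B)$. I must upgrade ``$f$ is a cofibration and a weak equivalence'' to ``$f$ is a trivial cofibration'' — but that is immediate, these are the same thing — and upgrade ``$\phi$ is a cofibration'' to ``$\phi$ is a trivial cofibration''. For the latter, note $X^{\cof}\to X$ is a trivial cofibration in $\F(A)$, so $f_!(X^{\cof})\to f_!X$ is a trivial cofibration in $\F(B)$ (left Quillen functors preserve trivial cofibrations); combined with two-out-of-three and the fact that the composite with $\phi$ is a weak equivalence, $\phi$ is a weak equivalence, hence a trivial cofibration. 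Conversely, if $f$ is a trivial cofibration and $\phi$ is a trivial cofibration in $\F(B)$, then $(f,\phi)$ is a cofibration by definition, and it is a weak equivalence because $f_!(X^{\cof})\to f_!X$ is a weak equivalence (here properness is the key input: $f$ trivial cofibration forces $f_!$ to preserve all weak equivalences, and $X^{\cof}\to X$ is one) and $\phi$ is a weak equivalence, so the composite is too.

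For (ii), suppose $(f,\phi)\in\Fib\cap\W$. Then $f$ is a fibration, $\phi^{\ad}\colon X\to f^*Y$ is a fibration in $\F(A)$, $f$ is a weak equivalence (hence a trivial fibration), and $f_!(X^{\cof})\to Y$ is a weak equivalence in $\F(B)$. I need $\phi^{\ad}$ to be a weak equivalence in $\F(A)$. Since $f$ is a trivial fibration, properness gives that $f^*$ preserves weak equivalences, and relativity gives that $f_!\dashv f^*$ is a Quillen equivalence; I will use the derived unit/counit criterion for Quillen equivalences: the composite $f_!(X^{\cof})\to Y$ being a weak equivalence in $\F(B)$ is precisely the statement that the adjoint $X^{\cof}\to f^*Y$ (or rather $X^{\cof}\to f^*(Y^{\fib})$) exhibits the correct derived unit, and combined with $X^{\cof}\to X$ being a weak equivalence this forces $\phi^{\ad}\colon X\to f^*Y$ to be a weak equivalence in $\F(A)$. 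Conversely, given $f$ a trivial fibration and $\phi^{\ad}$ a trivial fibration in $\F(A)$, then $(f,\phi)$ is a fibration by definition; to see it is a weak equivalence I trace the composite $f_!(X^{\cof})\to f_!X\to Y$, noting $\phi^{\ad}$ a weak equivalence plus $f^*$ detecting/preserving weak equivalences along the Quillen equivalence $f_!\dashv f^*$ yields that $\phi\colon f_!X\to Y$ corresponds to a weak equivalence after deriving, and $f_!(X^{\cof})\to f_!X$ is handled by properness again.

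The main obstacle is the bookkeeping in (ii): unlike cofibrations, where everything happens in $\F(B)$ and two-out-of-three closes the argument quickly, the fibration case mixes the fiber over $A$ (where $\phi^{\ad}$ lives) with the weak-equivalence condition stated in the fiber over $B$ (involving $f_!(X^{\cof})\to Y$). Bridging these requires the full strength of the hypothesis that $\F$ is proper \emph{and} relative — properness to move weak equivalences across $f^*$ for trivial fibrations $f$, and relativity so that $f_!\dashv f^*$ is a Quillen equivalence and hence reflects weak equivalences between (co)fibrant-enough objects. I would organize this step by reducing to the case where $X$ is cofibrant and $Y$ is fibrant in their respective fibers (using that all four classes in $\int_\M\F$ are well-behaved under the functorial replacements), at which point the derived unit of the Quillen equivalence is modeled on the nose and the equivalence ``$f_!X\to Y$ is a weak equivalence'' $\Leftrightarrow$ ``$X\to f^*Y$ is a weak equivalence'' becomes a direct consequence of the definition of a Quillen equivalence. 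This essentially appears already in the proof of the integral model structure in \cite{HP}, so I would also note that (i) and (ii) can alternatively be read off from the proof that $(\W,\Fib,\Cof)$ satisfies the model category axioms — in particular from the factorization and lifting axioms established there — but the direct argument above is more transparent.
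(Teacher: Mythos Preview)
The paper does not supply its own proof of this lemma: it is quoted verbatim from \cite[3.8]{HP} as background for Theorem~\ref{model structure}, so there is nothing to compare your argument against here. That said, your overall strategy---unwinding Definition~\ref{d:model} and using properness/relativity to transport weak equivalences across $f_!\dashv f^*$---is exactly the right one and matches the argument in \cite{HP}.

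There is one genuine slip in your forward direction of (i). You write that $X^{\cof}\to X$ is a trivial cofibration in $\F(A)$ and then invoke that left Quillen functors preserve trivial cofibrations. But by the paper's conventions (MC5), $X^{\cof}\to X$ is the second factor of $\emptyset\to X$ and is therefore a \emph{trivial fibration}, not a trivial cofibration; a left Quillen functor need not preserve it. The fix is the one you already use in the converse: since $f$ is a trivial cofibration, properness (Definition~\ref{d:proper}) says $f_!$ preserves \emph{all} weak equivalences, so $f_!(X^{\cof})\to f_!X$ is a weak equivalence and two-out-of-three finishes. With that correction (i) is fine. Your sketch for (ii) is on the right track but, as you note yourself, the bookkeeping is delicate; the cleanest way to close it is indeed to reduce to $X$ cofibrant and $Y$ fibrant so that the Quillen-equivalence criterion applies on the nose, and then use properness (for the trivial fibration $f$, $f^*$ preserves weak equivalences) to descend back to the general case.
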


We are now ready to state the main theorem of this subsection.
\begin{thm}\label{model structure}
Let $\M$ be a model category and $\F: \M \lrar \ModCat$ a proper relative functor. The classes of weak equivalences $\W$, fibrations $\Fib$ and cofibrations $\Cof$ of~\ref{d:model} endow $\int_{\M}\F$ with the structure of a model category, called the \textbf{integral model structure}.
\end{thm}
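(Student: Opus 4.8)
The plan is to verify the model category axioms MC1--MC5 for $\int_{\M}\F$ directly, leaning on the characterization of trivial cofibrations and trivial fibrations provided by Lemma~\ref{characterization}. Axiom MC1 (completeness and cocompleteness) is standard for a Grothendieck construction over a complete and cocomplete base with complete and cocomplete fibers: limits and colimits are computed by first taking the (co)limit in $\M$ and then the (co)limit of the pushed/pulled objects in the appropriate fiber, using that $f_!$ preserves colimits and $f^*$ preserves limits. Axiom MC2 (isomorphisms, closure under composition and retracts) is a routine diagram chase; the only mildly delicate point is closure of $\W$ under composition, which uses the two-out-of-three property in $\M$ together with the fact that for composable $(f,\phi)$ and $(g,\psi)$ one has a canonical identification $(gf)_!\cong g_!f_!$ compatible with the structure maps, so that the derived composite in the fiber over the final object can be analyzed via the $2$-out-of-$3$ property in $\F(C)$ after cofibrant replacement. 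Axiom MC3 (two-out-of-three) follows similarly: the base component satisfies $2$-out-of-$3$ in $\M$, and then the fiber component is handled by pushing everything forward along a zig-zag to a common fiber and invoking $2$-out-of-$3$ there, where properness of $\F$ is what guarantees the pushforward functors used are homotopically well-behaved.

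The heart of the proof is MC4 and MC5, and here is where I would spend the real effort. For the factorization axiom MC5(i), given $(f,\phi):(A,X)\to (B,Y)$, first factor $f = p\circ i$ in $\M$ with $i:A\to C$ a cofibration and $p:C\to B$ a trivial fibration. Then push $X$ forward to $i_!X \in \F(C)$, and in $\F(C)$ factor the composite $i_!X \to i_!X$... more precisely one must factor a suitable map into $\F(C)$; one factors $i_!(X^{\cof}) \to (\text{something})$ so that the first map is a cofibration and the second a trivial fibration in $\F(C)$, arranging the intermediate object $Z$ so that $(A,X)\to (C,Z)$ is a cofibration and $(C,Z)\to (B,Y)$ is a trivial fibration. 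The verification that $(C,Z)\to(B,Y)$ is a trivial fibration uses Lemma~\ref{characterization}(ii): $p$ is a trivial fibration in $\M$ and one needs the adjoint structure map $Z \to p^*Y$ to be a trivial fibration in $\F(C)$, which is where properness ($p^*$ preserving weak equivalences for trivial fibrations $p$) enters crucially. The factorization MC5(ii) is dual, factoring $f = q\circ j$ with $j$ a trivial cofibration and $q$ a fibration, and using properness of $j_!$ on weak equivalences. Functoriality of these factorizations is inherited from the functorial factorizations in $\M$ and in each fiber.

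For the lifting axiom MC4, consider a square with left edge $(i,\alpha):(A,X)\to(B,Y)$ a cofibration and right edge $(p,\beta):(C,Z)\to(D,W)$ a fibration, one of them trivial. First solve the lifting problem on the base: since $i:A\to B$ is a cofibration and $p:C\to D$ a fibration in $\M$ with one of them trivial, obtain a lift $h:B\to C$ in $\M$. Then transport the remaining data along $h$: the fiber components of the square, after applying $h_!$ (or equivalently working over $B$), reduce to a lifting problem in $\F(B)$ between the cofibration $\alpha:i_!X\to Y$ and the pulled-back fibration. When the left edge is trivial, Lemma~\ref{characterization}(i) tells us $i$ is a trivial cofibration and $\alpha$ a trivial cofibration in $\F(B)$, so the fiber lifting problem has a solution by MC4 in $\F(B)$; when the right edge is trivial, Lemma~\ref{characterization}(ii) gives $p$ trivial and $\beta^{\ad}$ a trivial fibration, and again the fiber lift exists. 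The main obstacle --- and the step I expect to require the most care --- is bookkeeping the adjunctions: one must consistently translate between the ``$f_!$ side'' description of cofibrations and the ``$f^*$ side'' description of fibrations, check that the various mate/Beck--Chevalley-type squares relating $h_!$, $i_!$, $p^*$ commute on the nose (or coherently, given $\F$ is only a pseudofunctor), and confirm that the lift $(h,\eta)$ assembled from the base lift and the fiber lift is genuinely a morphism in $\int_\M\F$ making both triangles commute. Properness and relativity of $\F$ are exactly the hypotheses that make all the homotopical comparisons in the fibers go through, so no step should require more than what Lemma~\ref{characterization} and Definitions~\ref{d:relative}--\ref{d:proper} already supply.
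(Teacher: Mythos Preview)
The paper does not supply a proof of this theorem: it is stated in the preliminaries section as a recall of the main result of~\cite{HP}, and no argument is given in the present paper. So there is nothing to compare your attempt against here directly.

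That said, your sketch is essentially the strategy used in~\cite{HP}: factor/lift first in the base $\M$, then in the appropriate fiber, using Lemma~\ref{characterization} to identify trivial (co)fibrations componentwise. Two small imprecisions are worth flagging. First, in MC5(i) the map you should factor in $\F(C)$ is the adjoint map $i_!X \to p^*Y$ (not anything involving a cofibrant replacement); factoring it as a cofibration followed by a trivial fibration immediately gives the desired decomposition, with Lemma~\ref{characterization}(ii) certifying that the second half lies in $\Fib\cap\W$. Second, in MC4 the fiber lifting problem naturally lives in $\F(C)$ (the target of the base lift $h$), with left map $h_!\alpha$ and right map $\beta^{\ad}$; since $h_!$ is left Quillen and $h^*$ is right Quillen for any $h$, the (trivial) cofibration and (trivial) fibration conditions transport as needed. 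Your remark about the Beck--Chevalley/pseudofunctor bookkeeping is apt: that is indeed where most of the care in~\cite{HP} is spent.
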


\subsection{Invariance of the integral model structure}\label{ss:invariance}

In this subsection we briefly recall the invariance property of the integral model structure, as appears in \cite[\S$2.2$]{HP}.
For the sake of clarity, we start with the categorical setting. Let $\AdjCat$ denote the $(2,1)$-category of categories and adjunctions. A morphism in $\AdjCat$ is an adjnuction (having the direction of the left adjoint) and a $2$-morphism is a pseudo-natural transformation of adjunctions which is an isomorphism in each component. 
We consider a diagram of categories
$$ \xymatrix{
\I \ar[dr]_\F\ar[rr]<1.3ex>^{\L} & & \J\ar[dl]^\G\ar[ll]<0.7ex>^{\R}_\upvdash \\ &  \AdjCat & \\}$$
such that the horizontal pair forms an adjunction. 

\begin{define}\label{d:left-right}
A \textbf{left morphism} from $\F$ to $\G$ over $\L \dashv \R$ is a pseudo-natural transformation $\F \Rightarrow \G \circ \L$, i.e., a compatible family of adjunctions.
$$\xymatrix{
\Sig^L_A:\F(A) \ar[r]<1ex> & \G(\L(A)):\Sig^R_A\ar[l]<1ex>_(0.55){\upvdash}.}$$
indexed by $A \in \I$. Similarly, a \textbf{right morphism} from $\F$ to $\G$ is a pseudo-natural transformation $\F \circ \R \Rightarrow \G$, i.e., a compatible family of adjunctions
$$\xymatrix{
\Theta^L_B:\F(\R(B)) \ar[r]<1ex> & \G(B):\Theta^R_B\ar[l]<1ex>_(0.45){\upvdash}.}$$
indexed by $B \in \J$.
\end{define}

\begin{rem}
Throughout we will be dealing with a pair of functors $\F,\G$ into $\AdjCat$ with different domains. To keep the notation simple, we shall, as before, use the notation $f_{!} \dashv f^*$ to indicate the image of a morphism $f$ under either $\F$ or $\G$. The possible ambiguity can always be resolved since $\F$ and $\G$ have different domains.
\end{rem}

We can now recall the model categorical counterpart.
\begin{defn}
Let $\M,\N$ be model categories and 
$$ \xymatrix{
\M \ar[dr]_\F\ar[rr]<1.3ex>^{\L} & & \N\ar[dl]^\G\ar[ll]<0.7ex>^{\R}_\upvdash \\ &  \ModCat & \\}$$
a diagram such that the horizontal pair is a Quillen adjunction and $\F,\G$ are proper relative functors. We will say that a left morphism $\F \Rightarrow \G \circ \L$ is a \textbf{left Quillen morphism} if the associated adjunctions
$$\xymatrix{
\Sig^L_A:\F(A) \ar[r]<1ex> & \G(\L(A)):\Sig_A^R\ar[l]<1ex>_(0.55){\upvdash}.}$$
are Quillen adjunction. Similarly we define \textbf{right Quillen morphisms}.
\end{defn}

\begin{define}\label{d:left-right-equiv}
Let $\M,\N,\F,\G$ be as above. We will say that a left Quillen morphism
$$\xymatrix{
\Sig^L_A:\F(A) \ar[r]<1ex> & \G(\L(A)):\Sig^R_A\ar[l]<1ex>_(0.55){\upvdash}.}$$
indexed by $A \in \M$ is a \textbf{left Quillen equivalence} if $\Sig^L_A \dashv \Sig^R_A$ is a Quillen equivalence for every cofibrant $A \in \M$. Similarly, we will say that a right Quillen morphism
$$\xymatrix{
\Theta^L_B:\F(\R(B)) \ar[r]<1ex> & \G(B) :\Theta^R_B\ar[l]<1ex>_(0.45){\upvdash}}$$
indexed by $B \in \N$ is a \textbf{right Quillen equivalence} if $\Theta^L_B \dashv \Theta^R_B$ is a Quillen equivalence for every fibrant $B \in \N$
\end{define}

Having established the necessary terminology, we can now state the invariance property:  
\begin{thm}\cite[$4.4$]{HP}\label{qa}
Let $\M,\N$ be model categories and
$$ \xymatrix{
\M \ar[dr]_\F\ar[rr]<1.2ex>^{\L} & & \N\ar[dl]^\G\ar[ll]<0.6ex>^{\R}_\upvdash \\ &  \ModCat & \\}$$
a diagram in which the horizontal pair is a Quillen adjunction and $\F,\G$ are proper relative functors. Let $\F \Rightarrow \G \circ \L$ be a left Quillen morphism given by a compatible family of adjunctions $\left(\Sig^L_A, \Sig^R_A\right)_{A \in \M}$. Then the induced adjunction
$$
\xymatrix{\Phi^L:\int_{\M}\F\ar[r]<1ex> & \int_{\N}\G:\Phi^R\ar[l]<1ex>_(0.5){\upvdash}.}
$$
is a Quillen adjunction. Furthermore, if the left Quillen morphism is a left Quillen equivalence then $\left(\Phi^L,\Phi^R\right)$ is a Quillen equivalence. The same result holds for the adjunction induced by a right Quillen morphism 
\end{thm}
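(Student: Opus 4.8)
The plan is to prove the left-morphism case in detail; the right-morphism case is entirely dual, with ``cofibrant'' replaced by ``fibrant'' and left/right adjoints interchanged, so I would only indicate the needed changes. First I would make the induced adjunction $\Phi^L \dashv \Phi^R$ explicit: on objects $\Phi^L(A,X) = (\L(A), \Sig^L_A(X))$, and on a morphism $(f,\phi)\colon (A,X)\to(B,Y)$ one uses the pseudo-naturality square relating $\Sig^L$ to the functoriality of $\F$ and $\G$ to produce a canonical map $\L(f)_!\Sig^L_A(X) \to \Sig^L_B(f_!X) \xrightarrow{\Sig^L_B(\phi)} \Sig^L_B(Y)$. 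The right adjoint $\Phi^R$ sends $(B,Y)$ to $(\R(B),\Sig^R_{\R(B)}(\R_{\to}(Y)))$ where one must compose with the unit/counit of $\L\dashv\R$ appropriately; I would record the adjunction unit and counit and check the triangle identities follow formally from those of $\L\dashv\R$ and of each $\Sig^L_A\dashv\Sig^R_A$.

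Next, to show $\Phi^L \dashv \Phi^R$ is a Quillen adjunction I would verify that $\Phi^L$ preserves cofibrations and trivial cofibrations, using Definition~\ref{d:model} and Lemma~\ref{characterization}(i). A cofibration $(f,\phi)$ has $f$ a cofibration in $\M$ and $\phi\colon f_!X\to Y$ a cofibration in $\F(B)$; since $\L$ is left Quillen, $\L(f)$ is a cofibration in $\N$, and since $\Sig^L_B$ is left Quillen, $\Sig^L_B(\phi)$ is a cofibration in $\G(\L(B))$; the remaining component of $\Phi^L(f,\phi)$ is the composite displayed above, and I must check the comparison map $\L(f)_!\Sig^L_A(X)\to\Sig^L_B(f_!X)$ is an isomorphism (it is, by pseudo-naturality of the left Quillen morphism, which is a strict transformation of pseudofunctors into $\ModCat$), so the composite is a cofibration. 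For trivial cofibrations I would invoke Lemma~\ref{characterization}(i): $(f,\phi)\in\Cof\cap\W$ iff $f$ is a trivial cofibration and $\phi$ is a trivial cofibration, and both $\L$ and $\Sig^L_B$ being left Quillen preserve these. This shows $\Phi^L$ is left Quillen.

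For the Quillen equivalence statement, assuming each $\Sig^L_A\dashv\Sig^R_A$ is a Quillen equivalence for cofibrant $A$, I would show $\Phi^L\dashv\Phi^R$ is a Quillen equivalence by checking: for every cofibrant $(A,X)\in\int_\M\F$ and fibrant $(B,Y)\in\int_\N\G$, a map $\Phi^L(A,X)\to(B,Y)$ is a weak equivalence iff its adjoint $(A,X)\to\Phi^R(B,Y)$ is. The key point is that cofibrancy of $(A,X)$ forces $A$ cofibrant in $\M$ and $X$ cofibrant in $\F(A)$ (by the characterization of cofibrant objects in the integral model structure, obtained from~\ref{d:model} with source the initial object), so the hypothesis on $\Sig^L_A$ applies. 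One then unwinds the definition of weak equivalence in $\int_\N\G$: the map $\L(f)\colon\L(A)\to B$ must be a weak equivalence in $\N$ — equivalent, since $\L\dashv\R$ is a Quillen equivalence and $A$ is cofibrant, $B$ fibrant, to $f^{ad}\colon A\to\R(B)$ being a weak equivalence in $\M$ — and the fiber component $\L(f)_!((\Sig^L_A X)^{cof})\to Y$ must be a weak equivalence in $\G(B)$. I would translate this fiber condition across the Quillen equivalence $\Sig^L_A\dashv\Sig^R_A$ (using properness and relativeness of $\G$ to move $Y$ along the weak equivalence $\L(f)$ and compare $\Sig^R$ applied to the base change of $Y$ with $\R$ applied), matching it with the corresponding condition defining a weak equivalence of $(A,X)\to\Phi^R(B,Y)$ in $\int_\M\F$.

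I expect the main obstacle to be the bookkeeping in the last step: carefully identifying, up to the coherence isomorphisms of the pseudofunctors $\F,\G$ and of the pseudo-natural transformation $\Sig$, the derived functor on fibers of $\Phi^L$ with $\Sig^L$ composed with the base-change functors, and ensuring that the cofibrant/fibrant replacements appearing in Definition~\ref{d:model}(1) are handled compatibly. The properness and relativeness hypotheses on $\F$ and $\G$ are exactly what make these comparisons of homotopy categories of fibers well-behaved, so the argument is a matter of assembling Lemma~\ref{characterization}, the two-out-of-three property, and the definition of Quillen equivalence on fibers; once the left case is done, the right case follows by the evident dualization, noting that fibrant objects $(B,Y)\in\int_\N\G$ have $B$ fibrant and $Y$ fibrant in the fiber, which is what the hypothesis on $\Theta^L_B\dashv\Theta^R_B$ for fibrant $B$ requires.
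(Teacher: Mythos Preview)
The paper does not prove this theorem at all: it is stated with the citation \cite[4.4]{HP} and no proof is given, as it is being recalled from prior work for later use. There is therefore nothing in the present paper to compare your proposal against.

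That said, your sketch follows the expected line of argument and is essentially correct in outline. One point deserves attention: in the Quillen-equivalence step you write ``since $\L\dashv\R$ is a Quillen equivalence and $A$ is cofibrant, $B$ fibrant''. The hypotheses of the theorem, as stated here, only require $\L\dashv\R$ to be a Quillen \emph{adjunction}; the definition of ``left Quillen equivalence'' (Definition~\ref{d:left-right-equiv}) concerns only the fiberwise adjunctions $\Sig^L_A\dashv\Sig^R_A$. But the conclusion cannot hold without $\L\dashv\R$ being a Quillen equivalence (take $\F,\G$ constant at the terminal model category to see this), so this is a missing hypothesis in the statement as recorded here rather than a flaw in your argument. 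You are right to assume it, but you should flag that you are doing so. The remaining bookkeeping you anticipate---tracking cofibrant replacements through the pseudo-naturality isomorphisms and using properness/relativeness to move objects along weak equivalences in the base---is indeed where the work lies, and Lemma~\ref{characterization} together with the 2-out-of-3 property are the right tools.
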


\subsection{Integral model categories of group actions}\label{ss:global-strict}

Here we will recall two models for a global homotopy theory of group actions on spaces which were constructed in~\cite{HP}. In the first model we associated to each simplicial group $G$ the Borel model category $\SS^{\B G}$ and then used Theorem~\ref{model structure} to integrate the corresponding functor. In the second model we replaced simplicial groups with the equivalent model of \textbf{reduced simplicial sets}, under which a group corresponds to its classifying space. One can then assign to each reduced simplicial set $B$ the model category of spaces over $B$ and again use theorem~\ref{model structure} to integrate the associated functor. The equivalence between $G$-spaces and spaces over the classifying space of $G$, combined with Theorem~\ref{qa}, yields an Quillen equivalence between these two integral model categories.

Let $\sGr$ be the category of simplicial groups. This category admits a model structure which is transferred from the Kan-Quillen model structure on spaces via the adjunction
\begin{equation}
\xymatrix@=13pt{
\SS \ar[rr]<1ex>^(0.5){F} && \sGr \ar[ll]<1ex>_(0.5){\upvdash}^(0.5){U}
}
\end{equation}
where $U$ is the forgetful functor and $F$ is the free group functor. In particular, a map of simplicial groups $f: G \lrar H$ is a weak equivalence (resp. fibration) if and only if the map $U(f): U(G) \lrar U(H)$ is a weak equivalence (resp. fibration). 

For a simplicial group $G$ one can consider the category of spaces endowed with an action of $G$. This category can be identified with the simplicial functor category $\SS^{\B G}$ where $\B G$ is the simplicial groupoid with one object having $G$ as its automorphism group. As such, one can consider $\SS^{\B G}$ with the \textbf{projective model structure}, also called the \textbf{Borel model structure}. In this model structure a map of $G$-spaces is a weak equivalence (resp. fibration) if and only if it is such as a map of spaces. In addition, a $G$-space $X$ is cofibrant if and only if the action of $G$ on $X$ is \textbf{free} on each simplicial level (see~\cite{DDK} Proposition 2.2).

Let $f: G \lrar H$ be a map of simplicial groups. Then we have a Quillen adjunction
$$ \xymatrix@=13pt{
\SS^{\B G} \ar[rr]<1ex>^(0.5){f_{!}} && \SS^{\B H} \ar[ll]<1ex>_(0.5){\upvdash}^(0.5){f^*}
}$$
where $f_{!}(X) = H \times_G X$ is the quotient of $H \times X$ by the action of $G$ given by $g(h,x) = (hg^{-1},gx)$
and $f^*(X) = \res^H_G(X)$ is the restriction functor.

The following proposition appears, without proof, in~\cite{HP}.
\begin{pro}\label{p:cof}
If $f: G \lrar H$ is a cofibration of simplicial groups then $U(f): U(G) \lrar U(H)$ is a cofibration of simplicial sets. 
\end{pro}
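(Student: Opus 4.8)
The plan is to reduce the statement to the analogous fact about free simplicial groups, using the standard small object argument description of cofibrations in the transferred model structure on $\sGr$. Since the model structure on $\sGr$ is transferred along the adjunction $F : \SS \rightleftarrows \sGr : U$ from the Kan--Quillen model structure, the generating cofibrations of $\sGr$ are the maps $F(\partial\Delta^n \hookrightarrow \Delta^n)$ for $n \geq 0$. Every cofibration $f : G \lrar H$ is therefore a retract of a (possibly transfinite) composition of pushouts of such generating cofibrations. Because $U$ preserves retracts and transfinite compositions (it is a right adjoint but also preserves all filtered colimits, as these are computed levelwise in both $\SS$ and $\sGr$ — the forgetful functor from groups to sets preserves filtered colimits), it suffices to show that $U$ carries a single pushout of a generating cofibration to a cofibration of simplicial sets, and more precisely that the class of maps $f$ with $U(f)$ a cofibration is closed under pushout, transfinite composition and retract. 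Closure under retract and transfinite composition is immediate from the corresponding closure properties of cofibrations in $\SS$ together with the colimit-preservation of $U$; so the crux is pushouts.

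First I would reduce to the levelwise/algebraic statement. Fix a simplicial degree $k$ and consider the pushout square in $\sGr$
$$\xymatrix{F(\partial\Delta^n)_k \ar[r]\ar[d] & G_k \ar[d] \\ F(\Delta^n)_k \ar[r] & H_k}$$
obtained by applying the degree-$k$ functor (which preserves colimits). Here $F(\Delta^n)_k$ is a free group and the left vertical map is the inclusion of the free subgroup on a subset of the free generators. The key algebraic input is then: \emph{if $A \hookrightarrow B$ is an inclusion of groups which, in each relevant free presentation, is the inclusion of a free factor, then for any pushout $A \to C$, the induced map $C \to C *_A B$ is injective.} This is a classical fact about amalgamated free products — a pushout of groups $C *_A B$ along a subgroup inclusion $A \hookrightarrow B$ always contains $C$ as a subgroup (by the normal form theorem for amalgamated free products), so in particular $C_k \lrar H_k$ is injective for each $k$, i.e. $U(f)$ is a levelwise injection, hence a cofibration of simplicial sets.

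The main obstacle is making the reduction to pushouts of \emph{generating} cofibrations rigorous and then identifying, at each simplicial level, the map $G_k \lrar H_k$ with an inclusion into an amalgamated free product along a subgroup. The subtlety is that $F(\partial\Delta^n)_k \lrar F(\Delta^n)_k$ need not be a free-factor inclusion on the nose, but it is always an \emph{injection} of groups (indeed $\partial\Delta^n \hookrightarrow \Delta^n$ is injective, $F$ applied levelwise to an injection of sets gives a subgroup inclusion of free groups, which is automatically a free factor since a subset of a basis generates a free factor), and the amalgamated-free-product argument only needs injectivity of $A \hookrightarrow B$. Thus the chain of reductions is: cofibration $\Rightarrow$ retract of a transfinite composite of pushouts of $F(\partial\Delta^n \hookrightarrow \Delta^n)$ $\Rightarrow$ (levelwise) retract of a transfinite composite of pushouts of subgroup inclusions of free groups $\Rightarrow$ (by the normal form theorem for amalgamated products, plus closure of injections under transfinite composition and retract) a levelwise injection $\Rightarrow$ a cofibration in $\SS$. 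I would spell out the normal-form step by citing the standard structure theory of amalgamated free products (e.g. Serre's \emph{Trees}), and handle the colimit compatibility of $U$ by noting all the colimits in question are filtered or pushouts along monomorphisms, both computed degreewise and compatibly with the forgetful functor $\Gp \to \Set$.
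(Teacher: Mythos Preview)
Your overall strategy---reduce to pushouts of generating cofibrations and argue levelwise in groups---is sound, but the justification of the key algebraic step contains a genuine error. You assert that ``a pushout of groups $C *_A B$ along a subgroup inclusion $A \hookrightarrow B$ always contains $C$ as a subgroup (by the normal form theorem for amalgamated free products)'', and later that the argument ``only needs injectivity of $A \hookrightarrow B$''. This is false: the normal form theorem requires \emph{both} structure maps to be injective. For a counterexample, take $A=F_2=\langle a_1,a_2\rangle$ and let $B=\langle a_1,a_2,t\mid ta_1t^{-1}=a_2\rangle$ be the HNN extension, so that $A\hookrightarrow B$ by Britton's lemma. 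Let $C=\langle c\rangle\cong\mathbb{Z}$ with $j\colon A\to C$ given by $j(a_1)=1$, $j(a_2)=c$. Then
\[
C *_A B \;\cong\; \langle c,t\mid t\cdot 1\cdot t^{-1}=c\rangle \;=\; \langle c,t\mid c=1\rangle \;\cong\; \mathbb{Z},
\]
and the map $C\to C*_A B$ kills $c$, hence is not injective.

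The repair is exactly the observation you made but then set aside: the maps $F(\partial\Delta^n)_k\hookrightarrow F(\Delta^n)_k$ are inclusions of a \emph{free factor}, being induced by an inclusion of generating sets. Writing $F(\Delta^n)_k\cong F(\partial\Delta^n)_k * F(S_k)$ with $S_k=(\Delta^n)_k\setminus(\partial\Delta^n)_k$, the pushout in groups along such a map is $G_k *_{F(\partial\Delta^n)_k}\bigl(F(\partial\Delta^n)_k*F(S_k)\bigr)\cong G_k * F(S_k)$, into which $G_k$ embeds by the normal form for ordinary free products---no amalgamation hypothesis on the other leg is needed. With this correction your argument goes through.

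For comparison, the paper takes a different route for the pushout step. Rather than invoking combinatorial group theory, it reduces to showing that monomorphisms of simplicial groups are closed under pushout and then passes to simplicial \emph{monoids}: the forgetful functor $\sGp\to\sMon$ has a right adjoint (invertible elements) and therefore preserves pushouts; a retract trick then reduces to a pushout along a free-monoid map, to which a result of Schwede--Shipley on monoid pushouts applies. Your approach, once the free-factor point is used correctly, is more elementary and self-contained; the paper's trades group-theoretic input for an external citation about monoids.
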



\begin{proof}
We will show that the weakly saturated class generated by the maps $\{F(\Lam^n_i)\hrar F(\Del^n)\}$ is contained in the class of all monomorphisms of simplicial groups. Since any map $F(\Lam^n_i)\hrar F(\Del^n)$ is a monomorphism, it is enough to show that the class of monomorphisms is weakly saturated. Since this class is clearly closed under retracts and transfinite compositions, it remains to prove that if 
\begin{equation}\label{e:push-gp}
\xymatrix{K\ar@{^{(}->}[r]\ar[d] & L\ar[d]\\ X\ar[r] & P}
\end{equation} 
is a pushout diagram of simplicial groups in which the map $K\hrar L$ is a monomorphism, then the map $X\lrar P$ is a monomorphism as well.

Now the forgetful functor $U_{\Mon}:\sGp\lrar \sMon$ admits a right adjoint obtained by taking the sub-monoid of invertible elements in each simplicial degree. This implies that $U_{\Mon}$ preserves colimits and hence the square \ref{e:push-gp} is also a pushout square of simplicial monoids. Let $T_{\Mon}:\sGr\lrar \sMon$ be the composite of the forgetful functor $U:\sGr \lrar \sSet$ and the free simplicial monoid functor $\sSet \lrar \sMon$. We then have a pair of natural transformations
$$ U_{\Mon} \Rightarrow T_{\Mon} \Rightarrow U_{\Mon} $$
whose composite is the identity (i.e., the functor $U_{\Mon}$ is a retract of $T_{\Mon}$ in \\ $\Fun(\sGr,\sMon)$). Furthermore, the functor $T_{\Mon}$ sends monomorphisms to monomorohisms. We can then form the following pushout in $\sMon$

\begin{equation}\label{e:push-gp-2}
\xymatrix{T_{\Mon}(K)\ar@{^{(}->}[r]\ar[d] & T_{\Mon}(L)\ar[d]\\ U_{\Mon}(X)\ar[r] & P_T.}
\end{equation} 

Since $U_{\Mon}(K)\lrar U_{\Mon}(L)$ is a retract of $T_{\Mon}(K)\lrar T_{\Mon}(L)$ in the arrow category of $\sMon$ and we get that the map $U_{\Mon}(X)\lrar U_{\Mon}(P)$ is a retract of $U_{\Mon}(X)\lrar P_T$. By \cite[Theorem 4.1]{SS}, the map $U_{\Mon}(X)\lrar P_T$ is a monomorphism and so $U_{\Mon}(X)\lrar U_{\Mon}(P)$ is a monomorphism. This implies that $X \lrar P$ is a monomorphism as desired.

\end{proof} 

\begin{cor}(cf.~\cite[Proposition 6.4]{HP})\label{c:G-spaces}
The functor $\U: \sGr \lrar \ModCat$ given by $\U(G) = \SS^{\B G}$ is proper and relative so that we obtain an integral model structure on $\displaystyle \int_{G\in\sGp}\SS^{\B G}$.
\end{cor}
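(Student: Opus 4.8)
The plan is to verify the two conditions in Definitions~\ref{d:relative} and~\ref{d:proper} for the functor $\U(G) = \SS^{\B G}$, invoking Proposition~\ref{p:cof} to compare the relevant Quillen functors with their underlying-space counterparts. First I would check relativeness: given a weak equivalence $f: G \lrar H$ of simplicial groups, I must show that the adjunction $f_{!} \dashv f^*$ between $\SS^{\B G}$ and $\SS^{\B H}$ is a Quillen equivalence. Since weak equivalences and fibrations in the Borel model structure are detected on underlying spaces, and $f^* = \res^H_G$ obviously preserves both, the pair is Quillen; for the equivalence, I would argue that on cofibrant objects (i.e. degreewise-free $G$-spaces) the derived unit and counit are underlying weak equivalences. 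The cleanest route is to recall (from~\cite{DDK} or~\cite{HP}) that $\SS^{\B G} \simeq \SS_{/\ovl{W}G}$ and that under this identification $f_{!}$, $f^*$ correspond to composition/pullback along the weak equivalence $\ovl{W}(f): \ovl{W}G \lrar \ovl{W}H$, which is plainly a Quillen equivalence of slice categories. Alternatively, one argues directly that for a free $G$-space $X$ the natural map $X \lrar f^* f_{!} X = \res^H_G(H \times_G X)$ is a weak equivalence (both sides have homotopy type $X$, $X \times_G EG$, etc.), and that $f_{!}$ of a cofibrant object is cofibrant, so the derived counit is also an equivalence.

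Next I would check properness. Suppose $f: G \lrar H$ is a trivial cofibration of simplicial groups. I must show $f_{!}$ preserves all weak equivalences of $G$-spaces. The key observation is that $f_{!}(X) = H \times_G X$, and since a trivial cofibration is in particular a cofibration, Proposition~\ref{p:cof} tells us $U(f): U(G) \hookrightarrow U(H)$ is a monomorphism; moreover it is a weak equivalence. Hence $H$, viewed as a left $G$-space via $f$, is a free $G$-space (the $G$-action on $H$ by left translation through $f$ is free because $f$ is injective in each degree) and is $G$-equivariantly weakly contractible onto its quotient in the appropriate sense — more precisely, for any $G$-space $X$ one has $H \times_G X \simeq \hocolim_G (H \times X)$ since $H$ is degreewise free over $G$, and $H \times X \lrar X$ is a $G$-equivariant weak equivalence because $U(f)$ is. Therefore $f_{!}$ computes a homotopy colimit and preserves weak equivalences between arbitrary (not necessarily cofibrant) $G$-spaces. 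For the fibration half: if $f: G \lrar H$ is a trivial fibration, then $f^* = \res^H_G$ preserves weak equivalences since these are underlying weak equivalences and restriction does not change the underlying space at all — so this direction is immediate and in fact holds for any $f$.

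With relativeness and properness established, Theorem~\ref{model structure} applies verbatim and produces the integral model structure on $\int_{G \in \sGr}\SS^{\B G}$.

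The main obstacle I anticipate is the properness verification for trivial cofibrations, specifically making precise that $f_{!} = H \times_G (-)$ preserves weak equivalences between \emph{non-cofibrant} $G$-spaces. The point is that $H$ is not just any $G$-space but a degreewise-free one (this is where Proposition~\ref{p:cof} enters essentially — without knowing $U(f)$ is mono one cannot conclude freeness), and a degreewise-free resolution-type argument — e.g. identifying $H \times_G X$ with the realization of the bar construction $B(H, G, X)$ and noting each simplicial level involves a free $G$-action so the whole thing computes a homotopy colimit invariant under $G$-equivariant underlying equivalences — lets one avoid cofibrancy hypotheses. I would spell this out carefully, as it is the one step where the model-categorical formalism does not hand the result to us for free.
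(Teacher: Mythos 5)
Your overall strategy is correct and is essentially the intended one: the paper does not reprove this corollary at all, but only supplies the missing ingredient (Proposition~\ref{p:cof}) and then defers to \cite[Proposition 6.4]{HP}; your direct verification of the properness and relativeness conditions, with Proposition~\ref{p:cof} entering precisely to guarantee that $G$ acts (levelwise) freely on $H$ via $f$, is the standard way that cited result is established, so there is no real divergence of method.

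One sentence in your properness paragraph is false as stated: the projection $H \times X \lrar X$ is $G$-equivariant, but it is a weak equivalence only when $H$ is weakly contractible, whereas the hypothesis that $U(f)$ is a weak equivalence gives $H \simeq G$, not $H \simeq \ast$. Fortunately this claim is not load-bearing. What your argument actually needs is only the identification $H \times_G X \simeq \hocolim_G(H \times X)$ (valid because the $G$-action on $H \times X$ is levelwise free, which is exactly where Proposition~\ref{p:cof} is used) together with the facts that $H \times (-)$ and homotopy orbits preserve underlying weak equivalences; this already shows that $f_!$ preserves all weak equivalences for any cofibration $f$, which is more than Definition~\ref{d:proper} asks. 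If what you intended was instead the $G$-equivariant map $G \times X \lrar H \times X$ induced by $f$, that map is an underlying weak equivalence when $U(f)$ is, and passing to quotients of free actions it shows that $X \cong G \times_G X \lrar H \times_G X = f^* f_! X$ is a weak equivalence for \emph{every} $G$-space $X$ when $f$ is a trivial cofibration; this corrected statement both gives properness and feeds into the relativeness check. Finally, in the relativeness argument the phrase ``$f_!$ of a cofibrant object is cofibrant, so the derived counit is also an equivalence'' is not by itself a justification: conclude instead from the standard criterion that $f^*$ preserves and reflects all weak equivalences (it does not change underlying spaces) while the derived unit is an equivalence on cofibrant objects, the latter being your bundle comparison over $X/G$ with fibers $G$ and $H$.
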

We shall refer to the model structure of Proposition \ref{c:G-spaces} as the \textbf{integral Borel model structure}.

Let us now recall the second model for global homotopy theory of group actions constructed in~\cite{HP}. As was established in~\cite{Kan}, the homotopy theory of simplicial groups may be equivalently described as the homotopy theory of \textbf{reduced simplicial sets}, i.e., simplicial sets $X \in \SS$ with $X_0 = \{*\}$. We will denote by $\SS_{0}$ the category of reduced spaces and by $\iota: \SS_0 \lrar \SS$ the full inclusion of reduced spaces in spaces. Then there exists a model strucutre on $\SS_0$  in which a map $f: X \lrar Y$ in $\SS_0$ is a weak equivalence (resp. cofibration) if and only if $\iota(f): \iota(X) \lrar \iota(Y)$ is a weak equivalence (resp. cofibration) in $\SS$ (\cite{Kan}, see also~\cite[\textrm{VI}.6.2]{GJ}). One then has a Quillen equivalence (see~\cite[$\mathrm{V}.6.3$]{GJ})
$$ \xymatrix@=13pt{\SS_0\ar[rr]<1ex>^(0.5){\GG} &&  \sGr \ar[ll]<1ex>_(0.5){\upvdash}^(0.5){\ovl{W}}} $$
where $\GG$ is the Kan loop group functor. Furthermore, for each simplicial group $G$ there exists a Quillen equivalence between $\SS^{\B G}$ and $\SS_{\iota(\ovl{W}(G))}$ (see~\cite{DDK}), where the latter is endowed with the corresponding slice model structure. In other words, the homotopy theory of $G$-spaces can be equivalently described as the homotopy theory of spaces over the classifying space $\ovl{W}(G)$.


Theorem~\ref{model structure} can then be applied to the functor \begin{equation}\label{e:V}\V:\SS_0 \lrar \ModCat ;\;\;B \mapsto \SS_{/\iota(B)},\end{equation} as is established in the following

\begin{pro}[\cite{HP}, Proposition $6.6$]\label{p:integral reduced}
The functor $\V$ above is proper and relative and gives rise to an integral model structure on $\displaystyle\int_{B\in \SS_0} \SS_{/\iota(B)}$.
\end{pro}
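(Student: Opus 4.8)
The plan is to factor $\V$ as the composite of the slice functor $\SS_{/(-)}: \SS\lrar\ModCat$ with the inclusion $\iota: \SS_0\lrar\SS$, to verify that $\SS_{/(-)}$ is itself proper and relative, and then to observe that $\iota$ transports enough of the model structure that $\V$ inherits both properties; Theorem~\ref{model structure} then applies. Before starting one should check that $\V$ genuinely lands in $\ModCat$: for $f: A\lrar B$ in $\SS_0$ the left adjoint $f_{!}$ is postcomposition with $\iota(f)$ and $f^*$ is pullback along $\iota(f)$, and since cofibrations, fibrations and weak equivalences of the slice model structures over $\SS$ are all created on underlying simplicial sets, $f_{!}$ preserves cofibrations and trivial cofibrations; hence $f_{!}\dashv f^*$ is a Quillen pair, and pseudofunctoriality is the coherence of iterated pullbacks.

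The two standard inputs I would use are that the Kan--Quillen model structure on $\SS$ is right proper and that trivial fibrations in any model category are stable under base change. Granting these, I would prove relativity of $\SS_{/(-)}$ as follows: let $g: A\lrar B$ be a weak equivalence in $\SS$; the claim is that the Quillen pair $g_{!}\dashv g^*$ between $\SS_{/A}$ and $\SS_{/B}$ is a Quillen equivalence. Every object of $\SS_{/A}$ is cofibrant, and a fibrant object of $\SS_{/B}$ is a Kan fibration $p: Z\lrar B$; for such $Z$ the projection $g^*Z = A\times_B Z\lrar Z$ is the pullback of the weak equivalence $g$ along the fibration $p$, hence a weak equivalence by right properness. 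Unwinding the slice adjunction, a map $\phi: g_{!}X\lrar Z$ over $B$ is the same datum as a map $\bar\phi: X\lrar Z$ over $B$, its adjoint $\phi^{ad}: X\lrar g^*Z$ composes with the projection above to give $\bar\phi$, and a map in either slice is a weak equivalence exactly when it is one on underlying simplicial sets; two-out-of-three then yields precisely the criterion for $g_{!}\dashv g^*$ to be a Quillen equivalence. For properness of $\SS_{/(-)}$: if $g$ is a trivial cofibration then $g_{!}$, being postcomposition with $g$, changes neither underlying objects nor underlying maps and hence preserves all weak equivalences (indeed this holds for any $g$); if $g$ is a trivial fibration, then for any $Z$ over $B$ the projection $g^*Z\lrar Z$ is a base change of the trivial fibration $g$, hence a trivial fibration, and two-out-of-three in the resulting naturality square shows $g^*$ preserves weak equivalences.

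It then remains to pass from $\SS_{/(-)}$ to $\V = \SS_{/(-)}\circ\iota$, for which it is enough that $\iota$ preserve weak equivalences, trivial cofibrations and trivial fibrations: a weak equivalence (resp.\ trivial cofibration, resp.\ trivial fibration) $f$ of $\SS_0$ then has $\iota(f)$ of the same type in $\SS$, and the clause established for $\SS_{/(-)}$ at $\iota(f)$ delivers the required clause for $\V$ at $f$. That $\iota$ preserves weak equivalences and cofibrations --- hence trivial cofibrations --- is built into the model structure on $\SS_0$; that $\iota$ preserves trivial fibrations --- equivalently, that a map of reduced simplicial sets is a trivial fibration in $\SS_0$ if and only if it is one in $\SS$ --- is the single point calling for a separate argument, for which I would appeal to the construction of this model structure in \cite{Kan} and \cite[\textrm{VI}.6]{GJ}. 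With this in place, $\V$ is proper and relative and Theorem~\ref{model structure} produces the integral model structure on $\int_{B\in\SS_0}\SS_{/\iota(B)}$.

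I expect the only genuinely delicate step to be this last one, the statement that $\iota$ detects trivial fibrations; everything else reduces to diagram chases resting on right properness of $\SS$ and stability of trivial fibrations under base change. (One could instead carry out the relativity and properness arguments directly over $\SS_0$, but since the relevant pullbacks still take place in $\SS$ the same two facts would reappear, so the factorization through $\SS_{/(-)}$ is the cleaner organization.)
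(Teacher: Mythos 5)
Your proposal is correct and takes essentially the same route as the paper's source for this statement ([HP, Proposition 6.6]) and as the parallel argument the paper itself gives for simplicial spaces in \S\ref{ss:global-act}: first show the slice functor $\SS_{/(-)}$ is proper and relative using right properness of $\SS$ and base-change stability of trivial fibrations, then transport along $\iota:\SS_0\lrar\SS$, which preserves weak equivalences, trivial cofibrations and trivial fibrations. The one point you flag as delicate --- that $\iota$ sends trivial fibrations of $\SS_0$ to trivial fibrations of $\SS$ --- is treated at the same level of detail in the paper (asserted with a reference), so deferring it to \cite{Kan} and \cite{GJ} is in line with the original.
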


We now wish to compare the two models constructed above. In \cite[\S 6]{HP}, the authors showed that the Quillen equivalences of~\cite{DDK} assemble to form a right Quillen equivalence $\V \circ \ovl{W} \lrar \U$. In light of Theorem~\ref{qa} we then have
\begin{cor}\label{c:equivalence-1}
There exists a Quillen equivalence of integral model structures
$$
\xymatrix{\Phi^L: \displaystyle\mathop{\int}_{B \in \SS_0}\SS_{/\iota(B)} \ar[r]<2ex> & \displaystyle\mathop{\int}_{G \in \sGr}\SS^{\B G}:\Phi^R\ar[l]<0.7ex>_(0.5){\upvdash}.}
$$
\end{cor}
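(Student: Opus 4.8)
The plan is to deduce Corollary~\ref{c:equivalence-1} directly from the invariance theorem~\ref{qa}, so the real content is to produce a right Quillen morphism $\V \circ \ovl{W} \Rightarrow \U$ which is moreover a right Quillen \emph{equivalence} in the sense of Definition~\ref{d:left-right-equiv}. First I would recall the setup: we have the Quillen equivalence $\GG : \SS_0 \rightleftarrows \sGr : \ovl{W}$, and we need a compatible family of Quillen adjunctions
$$\xymatrix{\Theta^L_G : \SS_{/\iota(\ovl{W}(G))} \ar[r]<1ex> & \SS^{\B G} : \Theta^R_G \ar[l]<1ex>_(0.55){\upvdash}}$$
indexed by $G \in \sGr$. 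The functor $\Theta^R_G$ sends a $G$-space $X$ to the Borel construction $X \times_G \ovl{W}(G) \to \ovl{W}(G)$ (equivalently, $X \mapsto X/G$ together with the map classifying the $G$-bundle $X \to X/G$ after replacing $X$ by a free resolution), and $\Theta^L_G$ sends a space $Y \to \iota(\ovl{W}(G))$ to the pullback of the universal $G$-bundle $\iota(W(G)) \to \iota(\ovl{W}(G))$ along $Y$. These are exactly the Quillen equivalences of~\cite{DDK}. I would then verify pseudo-naturality in $G$: for a map $f : G \to H$ of simplicial groups one must check that the square relating $\Theta^L$ to the pullback functors $\ovl{W}(f)^*$ on the base and the induction functors $f_!$ on the fibers commutes up to coherent isomorphism; this follows from the fact that both composites classify the $G$-bundle obtained from the universal $H$-bundle by restriction along $f$, and such identifications are functorial because pullbacks are.

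With the right Quillen morphism in hand, the next step is to observe that $\F := \V$ and $\G := \U$ are proper relative functors --- this is Proposition~\ref{p:integral reduced} and Corollary~\ref{c:G-spaces} --- and that the horizontal pair $\GG \dashv \ovl{W}$ is a Quillen adjunction (indeed a Quillen equivalence), which is cited from~\cite{GJ}. Then Theorem~\ref{qa} applies verbatim: the induced adjunction $\Phi^L : \int_{B}\SS_{/\iota(B)} \rightleftarrows \int_G \SS^{\B G} : \Phi^R$ is a Quillen adjunction, and it is a Quillen equivalence provided the right Quillen morphism is a right Quillen equivalence, i.e.\ provided $\Theta^L_G \dashv \Theta^R_G$ is a Quillen equivalence for every \emph{fibrant} $G \in \sGr$. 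But in $\sGr$ with the transferred model structure every object is fibrant (since every simplicial group is a Kan complex), so this is just the statement that $\SS_{/\iota(\ovl{W}(G))} \rightleftarrows \SS^{\B G}$ is a Quillen equivalence for every $G$, which is precisely the Dwyer--Dwyer--Kan result~\cite{DDK} recalled in the paragraph preceding the corollary.

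The main obstacle, and the only step requiring genuine care, is establishing the pseudo-naturality of the family $\{\Theta^L_G\}_G$ with the precise coherence data demanded by Definition~\ref{d:left-right} --- one must exhibit, for each $f : G \to H$, an invertible $2$-cell filling the square of left adjoints and check the cocycle condition for composites $G \to H \to K$. I expect this to be bookkeeping rather than a real difficulty: the left adjoints are all given by honest pullbacks along maps between bundle classifying spaces, and pullback squares paste, so the coherence isomorphisms are the canonical ones and the cocycle condition is automatic. Since the actual verification was carried out in~\cite[\S 6]{HP}, I would simply cite that reference for the construction of the right Quillen equivalence $\V \circ \ovl{W} \Rightarrow \U$ and spend the body of the proof on the (short) deduction from Theorem~\ref{qa}, emphasizing only the point that fibrancy of all objects of $\sGr$ is what upgrades ``right Quillen morphism'' to the hypothesis needed for the equivalence half of the theorem.

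\begin{proof}
By Proposition~\ref{p:integral reduced} the functor $\V : \SS_0 \lrar \ModCat$ is proper and relative, and by Corollary~\ref{c:G-spaces} so is $\U : \sGr \lrar \ModCat$. The pair $\GG \dashv \ovl{W}$ is a Quillen adjunction (in fact a Quillen equivalence) by~\cite[$\mathrm{V}.6.3$]{GJ}. As recalled above, for each simplicial group $G$ the Dwyer--Dwyer--Kan comparison~\cite{DDK} provides a Quillen equivalence
$$\xymatrix{\Theta^L_G : \SS_{/\iota(\ovl{W}(G))} \ar[r]<1ex> & \SS^{\B G} : \Theta^R_G \ar[l]<1ex>_(0.55){\upvdash},}$$
where $\Theta^L_G$ pulls back the universal $G$-bundle $\iota(\WW(G)) \lrar \iota(\ovl{W}(G))$ along a given map $Y \lrar \iota(\ovl{W}(G))$, and $\Theta^R_G$ is the associated Borel construction. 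As shown in~\cite[\S 6]{HP}, these adjunctions are pseudo-natural in $G$: for $f : G \lrar H$ both ways around the relevant square of left adjoints classify the $G$-bundle obtained by restricting the universal $H$-bundle along $f$, and the resulting comparison isomorphisms satisfy the cocycle condition because pullback squares paste. Hence the family $\left(\Theta^L_G,\Theta^R_G\right)_{G \in \sGr}$ constitutes a right Quillen morphism $\V \circ \ovl{W} \Rightarrow \U$.

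Since every simplicial group is a Kan complex, every object of $\sGr$ is fibrant in the transferred model structure; thus $\Theta^L_G \dashv \Theta^R_G$ is a Quillen equivalence for every fibrant $G$, so the right Quillen morphism above is a right Quillen equivalence in the sense of Definition~\ref{d:left-right-equiv}. Applying Theorem~\ref{qa} to this data, the induced adjunction
$$\xymatrix{\Phi^L: \displaystyle\mathop{\int}_{B \in \SS_0}\SS_{/\iota(B)} \ar[r]<2ex> & \displaystyle\mathop{\int}_{G \in \sGr}\SS^{\B G}:\Phi^R\ar[l]<0.7ex>_(0.5){\upvdash}}$$
is a Quillen equivalence.
\end{proof}
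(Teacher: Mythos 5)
Your proof is correct and takes essentially the same route as the paper: the fiberwise Dwyer--Dwyer--Kan Quillen equivalences, assembled (as in~\cite[\S 6]{HP}) into a right Quillen morphism $\V \circ \ovl{W} \Rightarrow \U$ which is a right Quillen equivalence since every simplicial group is fibrant, followed by an application of Theorem~\ref{qa}. The only slip is cosmetic and confined to your informal discussion: the Borel construction is $X \mapsto W(G) \times_G X$ over $\ovl{W}(G)$, not $X \times_G \ovl{W}(G)$; your formal proof states it correctly.
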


%

\section{An integral model structure for Segal group actions}\label{s:global-seg}

We shall now adapt the theory of Segal groups and Segal group actions to the setup of the model categorical Grothendieck construction. We will begin in \S\ref{ss:seg-group} by establishing a convenient model category for Segal groups. The construction we will use is based on a similar construction using topological spaces which appears in~\cite{Ber}. We will continue in \S\ref{ss:global-act} by showing that the functor which associates to each Segal group its corresponding model category of Segal group actions is proper and relative. For this purpose it will be convenient to identify the model structure constructed in~\cite{Pra} with a suitable slice model structure arising from work of Schwede, Shipley and Rezk. Finally, in \S\ref{ss:strict-vs} we will use the invariance theorem~\ref{qa} to show that the resulting integral model structure is equivalent to the two other model structures constructed in~\cite{HP}.

\subsection{A model structure for Segal groups}\label{ss:seg-group}

In this subsection we will construct a simplicial combinatorial model category whose fibrant-cofibrant objects are precisely the Segal groups. A similar construction (in the setting of topological spaces) was considered in~\cite{Ber}. 

Let $\sS = \SS^{\Del^{\op}}$ be the category of simplicial spaces. This category can be endowed with the injective model structure in which weak equivalences (resp. cofibrations) are the maps which are weak equivalences (resp. cofibrations) in each simplicial degree. This model structure coincides with the relevant \textbf{Reedy model structure} (see~\cite{Hir}).

\begin{notn}
Henceforth, we will abuse notation and write $\sS$ for the Reedy model structure on simplicial spaces. Other model structures on the category of simplicial spaces will be indicated by a subscript notation.
\end{notn} 

We shall denote by $|A_\bullet| = \diag(A_\bullet) \in \SS$ the realization of $A_\bullet$ in $\SS$. We will say that a simplicial space $A_\bullet$ is \textbf{reduced} if $A_0 = *$ is the one-pointed space. Let $\sS_0$ be the category of reduced simplicial spaces and $\iota: \sS_0 \lrar \sS$ the natural full inclusion. The functor $\iota$ admits a left adjoint
$$ (-)^{\red}: \sS \lrar \sS_0 $$
given by $A^{\red}_n = A_n/s^*(X_0)$ where $s$ is the unique map in $\Del$ from $[n]$ to $[0]$.

\begin{thm}[\cite{Ber}]
There exists a combinatorial model structure on $\sS_0$, which we shall call the \textbf{reduced Reedy model structure}, such that a map $f: A_\bullet \lrar B_\bullet$ is
\begin{enumerate}
\item
A weak equivalence if and only if $\iota(f):\iota(A_\bullet) \lrar \iota(B_\bullet)$ is a weak equivalence in each simplicial degree.
\item
A fibration if and only if $\iota(f):\iota(A_\bullet) \lrar \iota(B_\bullet)$ is a Reedy fibration.
\item
A cofibration if and only if $\iota(f):\iota(A_\bullet) \lrar \iota(B_\bullet)$ is a Reedy cofibration, i.e., if and only if it is a monomorphism.
\end{enumerate}
\end{thm}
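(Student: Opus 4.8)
The statement to prove is the existence of the reduced Reedy model structure on $\sS_0$ with the three specified classes of maps. The plan is to obtain this model structure by transfer (left-induced, in fact) along the adjunction $(-)^{\red} \dashv \iota$ from the Reedy model structure $\sS$ on simplicial spaces, exploiting the fact that $\iota$ is fully faithful and that $(-)^{\red}$ preserves monomorphisms. First I would recall that $\sS$ is combinatorial (being the injective/Reedy model structure on diagrams of simplicial sets) and simplicial, so it is in particular locally presentable and cofibrantly generated; pick generating cofibrations $I$ and generating trivial cofibrations $J$. The idea is to define cofibrations and weak equivalences in $\sS_0$ to be exactly the maps that become such (degreewise monos, degreewise weak equivalences) under $\iota$, and to define fibrations via the right lifting property, then verify the model category axioms.

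The key steps, in order: (1) Note $\sS_0$ is a presheaf category (reduced simplicial spaces are presheaves on an appropriate subcategory, or a reflective subcategory of $\sS$ closed under limits), hence complete and cocomplete, giving MC1; colimits in $\sS_0$ are computed by applying $(-)^{\red}$ to colimits in $\sS$, and limits agree with those in $\sS$ since $\iota$ is a right adjoint. (2) The 2-out-of-3 and retract axioms (MC2, MC3) for $\W$ are immediate since $\iota$ is faithful and these classes are defined by reflection along $\iota$. (3) For the factorization and lifting axioms, I would run the small object argument in $\sS_0$ on the image sets $(-)^{\red}(I)$ and $(-)^{\red}(J)$ — since $(-)^{\red}$ is a left adjoint it preserves the relevant cell-attachment structure, and because $\iota\circ(-)^{\red}$ relates back to the Reedy structure, a map has the RLP against $(-)^{\red}(I)$ in $\sS_0$ iff $\iota$ of it is a Reedy trivial fibration, and similarly with $J$ for Reedy fibrations. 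This identifies the fibrations and trivial fibrations as stated and supplies functorial factorizations (MC5). (4) The lifting axiom MC4 then follows from the characterization of the weak equivalences as those maps whose $\iota$-image is a degreewise equivalence, together with the observation that a Reedy (trivial) cofibration that is reduced has the expected lifting property — here one uses that $\iota$ detects and creates the relevant lifting problems since it is fully faithful and a right Quillen-type inclusion. (5) Finally, verify the explicit descriptions: cofibrations are exactly the monomorphisms in $\sS_0$ (a reduced map is a mono iff it is so degreewise iff $\iota$ of it is a Reedy cofibration, using that all objects of $\sS$ are Reedy cofibrant so Reedy cofibrations are just monomorphisms), weak equivalences are degreewise equivalences, and fibrations are Reedy fibrations between reduced objects, by the identification in step (3).

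The main obstacle I expect is step (4): establishing the lifting axiom, equivalently showing that the left-induced weak equivalences interact correctly with the transferred fibrations — i.e., that every trivial cofibration lifts against every fibration and every cofibration lifts against every trivial fibration. The subtlety is that the reduction functor $(-)^{\red}$ does not preserve weak equivalences in general (it collapses $A_0$, which can change homotopy type), so one cannot naively transport lifting data from $\sS$. The resolution is to observe that for \emph{reduced} objects the inclusion $\iota$ is a homotopical equivalence onto its image and that a Reedy-fibrant reduced object is already ``correct'', so all lifting problems in $\sS_0$ between the relevant maps are faithfully reflected from $\sS$; more concretely, since $\iota$ is fully faithful, a lifting problem in $\sS_0$ is the same as a lifting problem in $\sS$ between maps lying in $\iota(\sS_0)$, and one checks these fall under the Reedy model category's own lifting axiom. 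Once this compatibility is pinned down, everything else is a routine application of the transfer/left-induced model structure machinery (as in Hirschhorn or the Berger reference cited), and the explicit descriptions of $\W$, $\Fib$, $\Cof$ read off directly.
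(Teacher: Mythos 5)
Your overall strategy --- transferring the Reedy structure across the adjunction $(-)^{\red} \dashv \iota$ and running the small object argument on $(-)^{\red}(I)$ and $(-)^{\red}(J)$ --- is essentially the paper's, but there is a genuine gap at the acyclicity condition, and your discussion of the ``main obstacle'' misplaces where the difficulty lies. For your step (3) to yield MC5, and for the RLP characterizations to identify the fibrations and trivial fibrations with the classes in the statement, you must know that every relative $(-)^{\red}(J)$-cell complex in $\sS_0$ is a levelwise weak equivalence (and every relative $(-)^{\red}(I)$-cell complex a monomorphism). Since $\iota$ preserves pushouts and filtered colimits, this reduces to showing that $\iota((-)^{\red}(j))$ is a trivial Reedy cofibration for every trivial Reedy cofibration $j$, i.e.\ that the composite $\iota \circ (-)^{\red}: \sS \lrar \sS$ preserves cofibrations and trivial cofibrations. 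This is exactly the non-formal input, and, as you yourself observe, $(-)^{\red}$ does not preserve general weak equivalences, so it cannot be obtained by soft arguments. Your proposed resolution (lifting problems between reduced objects are reflected from $\sS$) only bears on MC4, which with your definitions is in any case immediate from full faithfulness of $\iota$; it says nothing about why the cell attachments performed in the factorizations --- where one genuinely quotients out the $0$-th space --- do not change the homotopy type, and without that your ``trivial cofibration followed by fibration'' factorization need not have a weakly invertible left half.

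The paper closes precisely this gap with a Reedy argument: for a (trivial) Reedy cofibration $A_\bullet \lrar B_\bullet$ the induced map $A_n \coprod_{A_0} B_0 \lrar B_n$ is a (trivial) cofibration under $B_0$, and applying the left Quillen pushforward $\SS_{B_0/} \lrar \SS_{*/}$, $K \mapsto K/B_0$, shows that $A_n/A_0 \lrar B_n/B_0$ is a (trivial) cofibration. Hence $\iota((-)^{\red})$ preserves cofibrations and trivial cofibrations, the transfer lemma of~\cite{Ber} applies, and the description of the cofibrations as monomorphisms then follows by adjunction together with the fact that the unit $A_\bullet \lrar (\iota(A_\bullet))^{\red}$ is an isomorphism. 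Until you supply an argument of this kind, your proof does not go through. (A minor point: what you construct is the right-transferred structure --- fibrations and weak equivalences created by the right adjoint $\iota$ --- not a left-induced one; $\iota$ is not a left adjoint, since it does not preserve coproducts.)
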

\begin{proof}
We first claim that the composed functor $\iota((-)^{\red}): \sS \lrar \sS$ preserves cofibrations and trivial cofibrations. Let $f: A_\bullet \lrar B_\bullet$ be a (trivial) cofibration. By standard properties of the Reedy model structure on simplicial spaces the induced map
$$ 
\xymatrix{
& B_0 \ar[dr]\ar[dl] & \\
A_n \coprod_{A_0} B_0 \ar[rr] && B_n\\
} $$
is a (trivial) cofibration of spaces under $B_0$. Since the pushforward functor
$$ \SS_{B_0/} \lrar \SS_{*/} $$
given by $K \mapsto K/B_0$ is a left Quillen functor we get that the induced map
$$ A_n/A_0 \cong \left(A_n \coprod_{A_0} B_0\right)/B_0 \lrar B_n/B_0 $$
is a (trivial) cofibration for every $n$.

Combining this result with the fact that the inclusion $\iota: \sS_0 \lrar \sS$ preserves pushouts and filtered colimits one can use the transfer lemma (see~\cite[Proposition $1.12.1$]{Ber}) to transfer the Reedy model structure from $\sS$ to $\sS_0$. The resulting model structure will satisfy $(1)$ and $(2)$ above by definition. Furthermore, we will get in addition that the functor $\iota$ preserves cofibrations. To prove that $\iota$ also reflects cofibrations it is enough to note that the unit map
$$ A_\bullet \lrar (\iota(A_\bullet))^{\red} $$
is an isomorphism for every $A \in \sS_0$.
\end{proof}

\begin{notn}
Henceforth, we will abuse notation and write $\sS_0$ for the reduced Reedy model structure on reduced simplicial spaces. Other model structures on the category of reduced simplicial spaces will be indicated by a subscript notation.
\end{notn} 

\begin{define}
Given a reduced simplicial space $A_\bullet$ and a space $K \in \SS$ we shall define
$$ K \otimes A_\bullet = (K \times A_\bullet)^{\red} $$
where $K \times A_\bullet \in \sS$ is the simplicial space defined by $(K \times A)_n = K \times A_n$.
\end{define}

\begin{lem}
The association $(K,A_\bullet) \mapsto K \otimes A_\bullet$ determines a simplicial model structure on the reduced Reedy model category $\sS_0$.
\end{lem}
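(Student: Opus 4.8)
The plan is to verify the axioms of a simplicial model category (in the sense of Quillen), namely that the bifunctor $(K,A_\bullet) \mapsto K \otimes A_\bullet$ extends to a closed $\SS$-module structure and that it satisfies the pushout-product (SM7) axiom with respect to the reduced Reedy model structure. Since $\sS_0$ is already known to be a combinatorial model category by the previous theorem, and the ordinary category of simplicial spaces $\sS$ is a simplicial model category in the Reedy structure with tensor $(K \times A_\bullet)$, the strategy is to \emph{transport} this simplicial structure along the reflective adjunction $(-)^{\red} \dashv \iota$.

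First I would set up the enrichment data. The simplicial mapping object is defined by $\Map_{\sS_0}(A_\bullet, B_\bullet)_n = \Hom_{\sS_0}(\Del^n \otimes A_\bullet, B_\bullet)$, and the cotensor $A_\bullet^K$ is defined by applying the cotensor of $\sS$ levelwise (which automatically lands in reduced simplicial spaces since $(A_0)^K = *$ when $A_0 = *$). The adjunction isomorphisms
$$ \Hom_{\sS_0}(K \otimes A_\bullet, B_\bullet) \cong \Hom_{\SS}(K, \Map_{\sS_0}(A_\bullet, B_\bullet)) \cong \Hom_{\sS_0}(A_\bullet, B_\bullet^K) $$
follow formally: using $K \otimes A_\bullet = (K \times A_\bullet)^{\red}$ and the adjunction $(-)^{\red} \dashv \iota$, the left-hand Hom is identified with $\Hom_{\sS}(K \times \iota(A_\bullet), \iota(B_\bullet))$, and then one uses the fact that $\sS$ is already tensored/cotensored over $\SS$ and that cotensoring preserves reducedness. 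The associativity and unit coherences ($(K \times L) \otimes A_\bullet \cong K \otimes (L \otimes A_\bullet)$ and $\Del^0 \otimes A_\bullet \cong A_\bullet$) likewise reduce to the corresponding coherences in $\sS$ together with the idempotence of $(-)^{\red}$, since $(-)^{\red}$ is a localization and commutes with products of the form $K \times (-)$ up to the canonical comparison.

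The substantive point is SM7: for a cofibration $i: K \hookrightarrow L$ in $\SS$ and a cofibration $j: A_\bullet \hookrightarrow B_\bullet$ in $\sS_0$, the pushout-product
$$ K \otimes B_\bullet \coprod_{K \otimes A_\bullet} L \otimes A_\bullet \lrar L \otimes B_\bullet $$
must be a cofibration in $\sS_0$, trivial if either $i$ or $j$ is. Here I would use that cofibrations in $\sS_0$ are exactly the monomorphisms (by the theorem above, part (3)), that $\iota$ preserves pushouts and colimits, and that the functor $K \times (-): \sS \to \sS$ preserves monomorphisms and the pushout-product property relative to the Reedy structure (this is the SM7 axiom for $\sS$, which holds since the Reedy/injective structure on simplicial spaces is simplicial). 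Applying $(-)^{\red}$, a left adjoint, commutes with the pushout, and the key lemma is that $(-)^{\red}$ preserves cofibrations and trivial cofibrations — but this is precisely what was proved at the start of the proof of the preceding theorem (the functor $\iota((-)^{\red})$ preserves cofibrations and trivial cofibrations via the left Quillen functor $K \mapsto K/B_0$). So the reduced pushout-product is the image under $(-)^{\red}$ of the Reedy pushout-product, hence a (trivial) cofibration. For the weak-equivalence half of SM7 one additionally invokes that weak equivalences in $\sS_0$ are levelwise weak equivalences of the underlying spaces and that $K \otimes (-)$ on cofibrant objects preserves levelwise weak equivalences, again descending from the corresponding fact in $\sS$.

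The main obstacle I anticipate is bookkeeping around the interaction of $(-)^{\red}$ with products: the functor $(-)^{\red}$ is not monoidal for the product on the nose, so one must be careful that $K \otimes A_\bullet = (K \times A_\bullet)^{\red}$ genuinely defines an action and that the pushout-product computed in $\sS_0$ agrees with $(-)^{\red}$ applied to the pushout-product in $\sS$ — this hinges on $A_\bullet$ already being reduced, so that $K \times A_\bullet$ has $0$-th space $K$ (not a point), and the reduction quotients out exactly this $K$. Once one checks that $(-)^{\red}$ applied to the relevant span of monomorphisms with reduced sources still yields the correct pushout, everything else is formal manipulation of adjunctions plus citations to the preceding theorem and to the simplicial structure of the Reedy model category $\sS$.
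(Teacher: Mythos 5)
Your proposal is correct and follows essentially the same route as the paper: the paper also reduces everything to the pushout–product axiom for the (levelwise) simplicial Reedy structure on $\sS$, using that $(-)^{\red}\colon \sS \lrar \sS_0$ is simplicial (compatible with tensors), preserves pushouts as a left adjoint, and preserves Reedy cofibrations and trivial Reedy cofibrations as established in the proof of the preceding theorem. Your version merely spells out the enrichment/adjunction bookkeeping that the paper compresses into the phrase that the reducification functor is ``simplicial by definition.''
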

\begin{proof}
We need to check that the pushout product axiom is satisfied. Recall that the Reedy model structure on $\sS$ is simplicial (with respect to the level-wise product). The result now follows directly from the fact that the reducification functor $(-)^{\red}: \sS \lrar \sS_0$ is simplicial by definition and preserves Reedy cofibrations, trivial Reedy cofibrations and pushouts.
\end{proof}

\begin{define}\label{d:f-i-n}
For each $n \geq 1$ and $0 \leq i \leq n$ we will denote by
$$ f_{n,i}: \Del^{\{0,...,n\}} \coprod_{\Del^{i}} \Del^{\{i,n+1\}} \hrar \Del^{n+1} $$
the corresponding inclusion of (level-wise discrete) simplicial spaces. We will denote by $f^{\red}_{n,i}$ the map of reduced simplicial spaces obtained by applying the reducification functor $(-)^{\red}: \sS \lrar \sS_0$. We will denote by 
$$ \mathbb{S} = \left\{f^{\red}_{n,i}\right\} $$ 
this set of maps in $\sS_0$. 
\end{define}

\begin{define}
We will say that a reduced simplicial space $A_\bullet$ is a \textbf{Segal group} if it is a Segal space in the sense of~\cite{Rez} in which every morphism is invertible (see~\cite[\S 5.5]{Rez}). Note that in this definition a Segal group is always fibrant in the reduced Reedy model structure.
\end{define}

\begin{rem}
If $A_\bullet$ is a reduced Segal space then $\pi_0(A_1)$ inherits a natural structure of a monoid. The condition that every morphism is in $A_\bullet$ is invertible is equivalent to the monoid $\pi_0(A_1)$ being a group.
\end{rem}

\begin{pro}\label{p:segal}
Let $A_\bullet$ be a reduced Reedy fibrant simplicial space. Then $A_\bullet$ is a Segal group if and only if $A_\bullet$ is \textbf{local} with respect to $\mathbb{S}$, i.e., if for every 
map $f: S_\bullet \lrar T_\bullet$ in $\mathbb{S}$ the induced map on simplicial mapping spaces
$$ \Map_{\sS_0}\left(T_\bullet,A_\bullet\right) \lrar \Map_{\sS_0}\left(S_\bullet,A_\bullet\right) $$
is a weak equivalence of spaces.
\end{pro}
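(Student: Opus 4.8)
The plan is to analyze what it means for a reduced Reedy fibrant $A_\bullet$ to be local with respect to the set $\mathbb{S} = \{f^{\red}_{n,i}\}$, and to match these conditions term by term with the defining conditions of a Segal group. First I would observe that since $A_\bullet$ is Reedy fibrant and the maps $f_{n,i}$ are (level-wise discrete) monomorphisms, the simplicial mapping space $\Map_{\sS_0}(T_\bullet, A_\bullet)$ computes the correct derived mapping space, and by adjunction $\Map_{\sS_0}(K^{\red}, A_\bullet) \cong \Map_{\sS}(K, \iota(A_\bullet))$ for a level-wise discrete $K$. Hence locality with respect to $f^{\red}_{n,i}$ is equivalent to locality of $\iota(A_\bullet)$, as an honest simplicial space, with respect to $f_{n,i}: \Del^{\{0,\dots,n\}} \coprod_{\Del^{i}} \Del^{\{i,n+1\}} \hrar \Del^{n+1}$. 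This reduces the problem to a statement purely about the simplicial space $\iota(A_\bullet)$, living over the Reedy model structure on $\sS$, which is the setting of Rezk's work on Segal spaces.

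Next I would split $\mathbb{S}$ into the part responsible for the Segal condition and the part responsible for invertibility. For the maps $f_{n,i}$ with $i = 0$ (or consecutive-vertex spine inclusions, which one recovers by composing), locality recovers precisely the Segal maps $A_n \xrightarrow{\ \sim\ } A_1 \times_{A_0} \cdots \times_{A_0} A_1$ being weak equivalences; since $A_0 = *$ this is the usual Segal condition for a reduced Segal space. For the remaining maps — where the inclusion $\Del^{\{i,n+1\}}$ is glued along the $i$-th vertex rather than along an edge, so that one is formally adjoining an inverse-type horn — locality forces the relevant "composition with a free edge" maps to be equivalences, which is exactly Rezk's characterization (\cite[\S 5.5]{Rez}) of a Segal space in which every morphism is invertible, i.e. a Segal group. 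So the strategy is: (i) show that a Reedy fibrant $A_\bullet$ satisfies the Segal condition iff it is local with respect to the spine-type maps among the $f^{\red}_{n,i}$; (ii) show that, assuming the Segal condition, $A_\bullet$ is a Segal group iff it is additionally local with respect to the remaining $f^{\red}_{n,i}$; (iii) combine.

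The key subtlety — and what I expect to be the main obstacle — is step (ii): verifying carefully that the specific combinatorial maps $f_{n,i}$ introduced in Definition~\ref{d:f-i-n} are, up to the equivalences already granted by the Segal condition, the same as whatever maps Rezk uses to detect invertibility. Concretely, one must check that $\Del^{\{0,\dots,n\}} \coprod_{\Del^{i}} \Del^{\{i,n+1\}}$, for a Segal-local $A_\bullet$, maps to $A_\bullet$ by a space weakly equivalent to the space of $n$-tuples of composable morphisms together with an extra morphism out of the $i$-th object, and that requiring this to agree with $A_{n+1}$ means precisely that left/right composition with an arbitrary morphism is a weak equivalence — Rezk's invertibility criterion. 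This is a diagram-chase in the homotopy category of spaces using the Segal equivalences as identifications, plus the observation that as $n$ and $i$ range over all allowed values one obtains enough such "composition" maps (both pre- and post-composition, in all the needed slots) to force genuine invertibility rather than a one-sided condition. The Reedy-fibrancy hypothesis is used throughout to ensure all the relevant strict mapping spaces and pullbacks are homotopically meaningful, and the reducedness $A_0 = *$ is used to discard basepoint issues and to identify $\Map_{\sS_0}$ with $\Map_\sS$ after applying $\iota$. I would also remark that the "only if" direction is the easier half: a Segal group is in particular a Segal space with invertible morphisms, so Rezk's results directly give the locality, and only minor bookkeeping is needed to transport it along $(-)^{\red} \dashv \iota$.
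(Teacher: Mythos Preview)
Your overall strategy---pass to $\sS$ via the adjunction $(-)^{\red}\dashv\iota$, then split $\mathbb{S}$ into a ``Segal part'' and an ``invertibility part''---is exactly what the paper does. However, you have the roles of the indices reversed, and this matters for the argument.

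Look at the $n=1$ case: $f_{1,1}:\Del^{\{0,1\}}\coprod_{\Del^{\{1\}}}\Del^{\{1,2\}}\hookrightarrow\Del^2$ is the spine inclusion (the inner horn $\Lam^2_1$), whereas $f_{1,0}:\Del^{\{0,1\}}\coprod_{\Del^{\{0\}}}\Del^{\{0,2\}}\hookrightarrow\Del^2$ is the outer horn $\Lam^2_0$. So it is the maps $f_{j,j}$ (with $i=n$, not $i=0$) that build the spine filtration: the paper writes $\Sp_{n+1}\subseteq\Del^{\{0,1,2\}}\coprod_{\Del^{\{2\}}}\Sp_{2,n+1}\subseteq\cdots\subseteq\Del^{n+1}$ and observes each step is a pushout along some $f_{j,j}$. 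Locality with respect to $f_{1,0}$, by contrast, is what gives the shearing equivalence $A_1\times A_1\simeq A_2\simeq A_1\times A_1$ sending $(a,b)\mapsto(a,ab)$, hence the group-like condition on $\pi_0(A_1)$. Your decomposition therefore needs to be swapped.

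Second, you underestimate the converse direction. Rezk's characterization of invertibility gives you locality with respect to $f_{1,0}$ (and the Segal condition gives $f_{1,1}$), but it does \emph{not} directly give locality with respect to the higher $f_{n,i}$; these are not spine inclusions and are not among Rezk's localizing maps. The paper handles this with an explicit combinatorial filtration: it introduces an intermediate subcomplex $T_{n,i}\subseteq\Del^{n+1}$ built from certain triangles $\Del^{\{j,j+1,n+1\}}$, shows $T_{n,i}$ is obtained from $\Sp_{n+1}$ by pushouts along $f_{1,1}$, and then shows the remaining gap between $\Sp_{0,n}\coprod_{\Del^{\{i\}}}\Del^{\{i,n+1\}}$ and $T_{n,i}$ is filled by pushouts along $f_{1,0}$. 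Your proposal's ``minor bookkeeping'' gestures at this but does not indicate how the reduction to $n=1$ is carried out; that filtration is the actual content of this half of the proof.
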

\begin{proof}
Given $0 \leq i < j \leq n$ we will denote by $\Sp_{i,j} \subseteq \Del^n$ the simplicial subset given by
$$ \Sp_{i,j} = \Del^{\{i,i+1\}} \coprod_{\Del^{\{i+1\}}} \Del^{\{i+1,i+2\}} \coprod_{\Del^{\{i+2\}}} ... \coprod_{\Del^{\{j-1\}}} \Del^{\{j-1,j\}} \subseteq \Del^n .$$
Since $A_\bullet$ is Reedy fibrant we note that $A_\bullet$ is a Segal space if and only if it is local with respect to the inclusion $\Sp_{0,n} \subseteq \Del^n$ for every $n \geq 2$. 

Now assume that $A_\bullet$ is local with respect to $\mathbb{S}$. Consider the sequence of inclusions
$$ \Sp_{n+1} \subseteq \Del^{\{0,1,2\}} \coprod_{\Del^{\{2\}}} \Sp_{2,n+1} \subseteq \Del^{\{0,1,2,3\}} \coprod_{\Del^{\{3\}}} \Sp_{3,n+1} \subseteq ... \subseteq \Del^{\{0,...,n\}} \coprod_{\Del^{\{n\}}} \Del^{\{n,n+1\}} \subseteq \Del^{\{n+1\}} .$$
Then each map in this sequence is a pushout along $f_{j,j} \in \mathbb{S}$ for some $1 \leq j \leq n$. Since $A_\bullet$ is $\mathbb{S}$-local we deduce that $A_\bullet$ is local with respect to $\Sp_{n+1} \subseteq \Del^{n+1}$ for every $n \geq 1$ and so $A_\bullet$ is a Segal space. Let us now show that the monoid $\pi_0(A_1)$ is a group. Since $A_\bullet$ is local with respect to $f_{1,0},f_{1,1} \in \mathbb{S}$ we obtain a span of weak equivalences 
$$ \xymatrix{
& A_2 \ar_{\simeq}^{f_{1,0}^*}[dr]\ar^{\simeq}_{f_{1,1}^*}[dl] & \\
A_1 \times A_1 && A_1 \times A_1 \\
}$$
This implies that the shearing map
$$ \pi_0(A_1) \times \pi_0(A_1) \lrar \pi_0(A_1) \times \pi_0(A_1) $$
given by $(a,b) \mapsto (a,ab)$ is a bijection, so that $\pi_0(A_1)$ is a group.

Now assume that $A_\bullet$ is a Segal group. Then $A_\bullet$ is a Segal space and is hence local with respect to $\Sp_{n+1} \subseteq \Del^{n+1}$ for every $n \geq 1$. In particular, $A_\bullet$ is local with respect to $f_{1,1} \in \mathbb{S}$. Now let $T_{n,i} \subseteq \Del^{n+1}$ be the simplicial subset given by the edges $\Del^{\{j,j+1\}}$ for $j=0,...,i-1$ and the triangles $\Del^{\{j,j+1,n+1\}}$ for $j=i,...,n-1$. Then $T_{n,i}$ contains $\Sp_{n+1}$ and can be obtained from $\Sp_{n+1}$ by a sequence of pushouts along $f_{1,1}$. This means that $A_\bullet$ is local with respect to the inclusion $\Sp_{n+1} \subseteq T_{n,i}$ and so $A_\bullet$ is local with respect to the inclusion $T_{n,i} \subseteq \Del^{n+1}$ as well. Now consider the diagram
$$ \xymatrix{
\Sp_{0,n} \coprod_{\Del^{\{i\}}}\limits \Del^{\{i,n+1\}} \ar[r]\ar[d] & T_{n,i} \ar[d] \\
\Del^{\{0,...,n\}} \coprod_{\Del^{\{i\}}}\limits \Del^{\{i,n+1\}} \ar^-{f_{n,i}}[r] & \Del^{n+1} \\
}$$
From the above considerations we see that $A_\bullet$ is local with respect to both vertical maps. Hence to show that $A_\bullet$ is local with respect $f_{n,i}$ is equivalent to showing that $A_\bullet$ is local with respect to the upper horizontal map. Now $T_n$ can be obtained from $\Sp_{0,n} \coprod_{\Del^{\{i\}}}\limits \Del^{\{i,n+1\}}$ by performing pushouts along $f_{1,0}$, and so it will suffice to show that $A_\bullet$ is local with respect to $f_{1,0}$. This, in turn, follows directly from the characterization of invertible morphisms in a Segal space appearing in~\cite[\S 5.5]{Rez}.

\end{proof}


Since the reduced Reedy model structure on $\sS_0$ is left proper and combinatorial we can take its left Bousfield localization with respect to $\mathbb{S}$. We shall call the localized model structure the \textbf{Segal group model structure} and denote it by $\Seg$. In light of Proposition~\ref{p:segal}, this (simplicial, combinatorial) model structure satisfies the following properties:
\begin{enumerate}
\item
Weak equivalences in the Segal group model structure are the maps $f: A_\bullet \lrar B_\bullet$ such that for every Segal group $S_\bullet$ the induced map
$$ \Map_{\sS_0}\left(B_\bullet,S_\bullet\right) \lrar \Map_{\sS_0}\left(A_\bullet,S_\bullet\right) $$
is a weak equivalence.
\item
The cofibrations are the monomorphisms.
\item
An object $A_\bullet$ is fibrant if and only if it is a Segal group.
\end{enumerate}

\begin{rem}\label{r:locality}
In light of Proposition~\ref{p:segal}, it follows from the general theory of left Bousfield localization that if $f: A_\bullet \lrar B_\bullet$ is a weak equivalence in the Segal group model structure and both $A_\bullet,B_\bullet$ are Segal groups then $f$ is a weak equivalence in each simplicial degree.
\end{rem}

Note that if $A_\bullet$ is a reduced simplicial space then $|A_\bullet|$ is a reduced simplicial set. We shall hence consider the realization functor also as a functor $|-|:\sS_0 \lrar \SS_0$. As such, it admits a right adjoint
$$ \Om: \SS_0 \lrar \sS_0 $$
where $\Om(A)_n$ is given by the simplicial mapping space of pairs 
$$ \Om(A)_n = \Map((\Del^n,(\Del^n)_0),(A,A_0)) .$$
Furthermore, the adjunction
\begin{equation}\label{e:adj-reedy} 
\xymatrix@=13pt{\SS_0\ar[rr]<1ex>^(0.6){|-|} &&  \SS_0 \ar[ll]<1ex>_(0.4){\upvdash}^(0.4){\Om}}
\end{equation}
is a simplicial Quillen adjunction (where $\SS_0$ is endowed with the reduced Reedy model structure).


\begin{rem}\label{r:easy-side}
Note that for any $f_{n,i}$ as in Definition~\ref{d:f-i-n} the induced map $|f_{n,i}|$ is a weak equivalence of (non-reduced) spaces, and induces an isomorphism on the sets of vertices. It then follows that $|f^{\red}_{n,i}|$ is a weak equivalence of reduced spaces. By adjunction we get that $\Om(S)_\bullet$ is a Segal group for any fibrant reduced simplicial set $S$. Using adjunction again it follows that the realization functor sends every weak equivalence in $\Seg$ to a weak equivalence in $\SS_0$.
\end{rem}


\begin{cor}\label{c:qe-reduced}
The adjunction~\ref{e:adj-reedy} descends to a simplicial Quillen adjunction
$$ \xymatrix@=13pt{\Seg\ar[rr]<1ex>^(0.6){|-|} &&  \SS_0 \ar[ll]<1ex>_(0.4){\upvdash}^(0.4){\Om}} $$
\end{cor}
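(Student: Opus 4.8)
The goal is to show that the adjunction $|-| \dashv \Om$ of~\ref{e:adj-reedy}, already a simplicial Quillen adjunction between the reduced Reedy model structure on $\sS_0$ and the reduced Reedy model structure on $\SS_0$, descends to a simplicial Quillen adjunction after left Bousfield localization of the source at $\mathbb{S}$. The plan is to invoke the universal property of left Bousfield localization: a left Quillen functor $F: \mathcal{C} \lrar \mathcal{D}$ out of a left Bousfield localization $L_{\mathbb{S}}\mathcal{C}$ remains left Quillen provided that $F$ sends each map in $\mathbb{S}$ (or rather a cofibrant-replacement thereof, but here all objects are cofibrant since cofibrations are monomorphisms) to a weak equivalence in $\mathcal{D}$. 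Concretely, it suffices to check that for every $f_{n,i}$ as in Definition~\ref{d:f-i-n}, the realization $|f^{\red}_{n,i}|$ is a weak equivalence in the reduced Reedy model structure on $\SS_0$ — equivalently, after applying $\iota$, a weak equivalence of simplicial sets in each simplicial degree. But this is precisely the content of Remark~\ref{r:easy-side}: $|f_{n,i}|$ is a weak equivalence of (non-reduced) spaces inducing a bijection on vertices, hence $|f^{\red}_{n,i}|$ is a weak equivalence of reduced spaces.

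The steps, in order, are as follows. First I would recall that $\Seg$ is by construction the left Bousfield localization of $\sS_0$ (with the reduced Reedy model structure) at the set $\mathbb{S}$, so it has the same underlying category, the same cofibrations (the monomorphisms), and fewer fibrant objects (the Segal groups, by Proposition~\ref{p:segal}) and more weak equivalences. Second, I would cite the standard criterion (e.g.~\cite{Hir}) that an adjunction $F \dashv G$ with $F$ left Quillen from $\sS_0$ to $\SS_0$ descends to a Quillen adjunction $L_{\mathbb{S}}\sS_0 \rightleftarrows \SS_0$ if and only if $G$ carries fibrant objects of $\SS_0$ to $\mathbb{S}$-local objects of $\sS_0$; equivalently, since every object of $\sS_0$ is cofibrant, if and only if $F$ carries the maps of $\mathbb{S}$ to weak equivalences in $\SS_0$. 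Third, I would verify this latter condition: each element of $\mathbb{S}$ is $f^{\red}_{n,i}$, and Remark~\ref{r:easy-side} already establishes that $|f^{\red}_{n,i}|$ is a weak equivalence of reduced simplicial sets. Hence $|-|: \Seg \lrar \SS_0$ is left Quillen. Finally, simpliciality is inherited: the localization $\Seg$ is a simplicial model category with the same tensor/cotensor as $\sS_0$, the functor $|-|$ is simplicial as already noted, and a simplicial left Quillen functor between simplicial model categories gives a simplicial Quillen adjunction — no new verification of the pushout-product compatibility is needed since the cofibrations and the simplicial structure are unchanged by localization.

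I do not anticipate a serious obstacle here; the proposition is essentially a bookkeeping consequence of the localization already performed together with Remark~\ref{r:easy-side}. The one point that deserves a sentence of care is phrasing the descent criterion in the form I actually use: the cleanest statement says that $\Om$ must land in $\mathbb{S}$-local objects, and by Remark~\ref{r:easy-side} we in fact know the stronger fact that $\Om(S)_\bullet$ is a Segal group (hence $\mathbb{S}$-local by Proposition~\ref{p:segal}) whenever $S$ is a fibrant reduced simplicial set. That is exactly what is needed. So the proof reduces to: (i) $\Om$ sends fibrant objects to $\mathbb{S}$-local (= Segal group) objects by Remark~\ref{r:easy-side}; (ii) therefore, by the universal property of left Bousfield localization, $|-| \dashv \Om$ descends to a Quillen adjunction on $\Seg$; (iii) it is simplicial because the underlying simplicial adjunction is unchanged and $|-|$ is simplicial. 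I would write this as a short three-line argument citing Remark~\ref{r:easy-side}, Proposition~\ref{p:segal}, and the standard reference for localization.

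\begin{proof}
By construction $\Seg$ is the left Bousfield localization of the (left proper, combinatorial, simplicial) reduced Reedy model category $\sS_0$ at the set $\mathbb{S}$; in particular it has the same underlying category, the same cofibrations, and the same simplicial structure, while its fibrant objects are exactly the Segal groups by Proposition~\ref{p:segal}. By the universal property of left Bousfield localization, the left Quillen functor $|-|: \sS_0 \lrar \SS_0$ of~\ref{e:adj-reedy} descends to a left Quillen functor $|-|: \Seg \lrar \SS_0$ provided its right adjoint $\Om$ carries fibrant objects of $\SS_0$ to $\mathbb{S}$-local objects of $\sS_0$, i.e.\ to Segal groups. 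But this is precisely Remark~\ref{r:easy-side}: for a fibrant reduced simplicial set $S$ the reduced simplicial space $\Om(S)_\bullet$ is a Segal group. Hence the adjunction descends to a Quillen adjunction on $\Seg$. Finally, since the underlying simplicial adjunction is unchanged and $|-|$ is a simplicial functor, the descended adjunction is again simplicial.
\end{proof}
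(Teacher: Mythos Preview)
Your proof is correct and essentially the same as the paper's: both reduce immediately to Remark~\ref{r:easy-side}. The only cosmetic difference is packaging---the paper checks directly that $|-|$ preserves cofibrations (unchanged by localization) and trivial cofibrations (using the last sentence of Remark~\ref{r:easy-side}, that $|-|$ sends all $\Seg$-weak equivalences to weak equivalences), whereas you invoke the equivalent dual criterion that $\Om$ lands in $\mathbb{S}$-local objects (using the middle sentence of the same remark).
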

\begin{proof}
Clearly $|-|$ preserves cofibrations. From Remark~\ref{r:easy-side} we see that $|-|$ preserves trivial cofibrations as well.
\end{proof}

The following converse of Remark~\ref{r:easy-side} is essentially contained in~\cite{Seg}.
\begin{thm}[\cite{Seg}]\label{t:segal}
If $A_\bullet$ is a Segal group then the natural map
$$ A_\bullet \lrar \Om\left(|A_\bullet|^{\fib}\right)_\bullet $$
is a level-wise equivalence.
\end{thm}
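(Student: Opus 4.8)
This is a homotopy-coherent version of Segal's group-completion / delooping statement, so the plan is to reduce it to a statement about classifying spaces and loop spaces. The map $A_\bullet \lrar \Om(|A_\bullet|^{\fib})_\bullet$ is a map of Segal groups (by Remark~\ref{r:easy-side}, the target is a Segal group), so by Remark~\ref{r:locality} it suffices to check that it is a weak equivalence in each simplicial degree. Since both sides are Segal spaces, it is enough to check this in degree $1$: in degree $1$ the map $A_1 \lrar \Om(|A_\bullet|^{\fib})_1$ is the comparison map $A_1 \lrar \Omega_{\ast}(|A_\bullet|^{\fib})$ relating the "space of morphisms" of the Segal group to the based loop space of its realization. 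The claim in every degree $n$ then follows from the Segal conditions on both sides, which exhibit $A_n \simeq A_1^{\times n}$ and $\Om(|A_\bullet|^{\fib})_n \simeq \Om(|A_\bullet|^{\fib})_1^{\times n}$ compatibly.

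\textbf{Reducing to the group-completion theorem.} To identify $A_1 \lrar \Omega_\ast |A_\bullet|^{\fib}$ as an equivalence, I would proceed as follows. Since $A_\bullet$ is a Segal group, the monoid $\pi_0(A_1)$ is a group, and more precisely the Segal conditions make $A_\bullet$ into a "group-like $\Del$-space" in the sense of Segal~\cite{Seg}. Segal's Proposition~1.5 (the result cited in the introduction of the paper) asserts precisely that for a group-like special $\Del$-space $A_\bullet$, the canonical map $A_1 \lrar \Omega_\ast |A_\bullet|$ is a weak equivalence; passing to a fibrant replacement of $|A_\bullet|$ (which, being reduced, has the same based loop space up to equivalence) gives $A_1 \xrightarrow{\simeq} \Om(|A_\bullet|^{\fib})_1$. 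One must be slightly careful that the realization functor $|-| = \diag$ here agrees, up to weak equivalence, with the fat geometric realization used in~\cite{Seg}; this is standard since all the $A_n$ are simplicial sets and the simplicial object is Reedy cofibrant in an appropriate sense (the degeneracies are monomorphisms because $A_\bullet$ is reduced and levelwise a Kan complex). The identification of the map itself as the Segal comparison map comes from unwinding the definition of $\Om$: an element of $\Om(A)_1 = \Map((\Del^1,(\Del^1)_0),(A,A_0))$ is exactly a based loop in $A$, and the structure map $A_1 \to \Om(|A_\bullet|)_1$ is induced by the inclusion of $A_1$ as the image of the nondegenerate $1$-simplices under $A_\bullet \to |A_\bullet|$.

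\textbf{Propagating to all degrees.} Once degree $1$ is settled, fix $n \geq 1$. On the source, the Segal map $A_n \xrightarrow{\simeq} \underbrace{A_1 \times_{A_0} \cdots \times_{A_0} A_1}_{n} = A_1^{\times n}$ is a weak equivalence since $A_\bullet$ is a Segal space and $A_0 = \ast$. On the target, $\Om(|A_\bullet|^{\fib})_\bullet$ is a Segal group by Remark~\ref{r:easy-side}, so the analogous Segal map $\Om(|A_\bullet|^{\fib})_n \xrightarrow{\simeq} \Om(|A_\bullet|^{\fib})_1^{\times n}$ is a weak equivalence as well. These Segal maps are induced by the same $n$ coface-type inclusions $\Del^{\{j,j+1\}} \hookrightarrow \Del^n$ on both sides, hence the square
\begin{equation*}
\xymatrix{
A_n \ar[r]\ar[d]_{\simeq} & \Om(|A_\bullet|^{\fib})_n \ar[d]^{\simeq} \\
A_1^{\times n} \ar[r]_-{\simeq} & \Om(|A_\bullet|^{\fib})_1^{\times n}
}
\end{equation*}
commutes, the bottom map being the $n$-fold product of the degree-$1$ equivalence. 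By two-out-of-three, the top map is a weak equivalence, which completes the proof.

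\textbf{Main obstacle.} The genuinely nontrivial input is Segal's group-completion statement identifying $A_1 \simeq \Omega_\ast |A_\bullet|$ for group-like special $\Del$-spaces; everything else is bookkeeping with Segal conditions and adjunctions. The one point requiring care in the bookkeeping is matching the two realization functors and checking that $|A_\bullet|$ is "good" enough (proper / Reedy cofibrant) that its naive and fat realizations agree — but this is automatic here since $A_\bullet$ is reduced and levelwise fibrant, so I would not expect it to cause real difficulty.
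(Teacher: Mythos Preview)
Your argument is correct and is exactly the unpacking of the citation the paper gives: the paper offers no proof of its own beyond attributing the result to~\cite{Seg}, and your reduction to degree~$1$ via the Segal conditions followed by an appeal to Segal's Proposition~1.5 for group-like special $\Del$-spaces is precisely the intended content of that citation. The only cosmetic point is that invoking Remark~\ref{r:locality} in your first paragraph is superfluous (the statement to be proved \emph{is} the level-wise equivalence, not a Segal-group-model-structure equivalence), and your worry about Reedy cofibrancy is unnecessary here since in $\sS$ the Reedy and injective model structures coincide, so every simplicial space is Reedy cofibrant.
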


\begin{cor}\label{c:realization}
A map $f: A_\bullet \lrar B_\bullet$ is a weak equivalence in the Segal group model structure if and only if it induces a weak equivalence
$$ |f|:|A_\bullet| \lrar |B_\bullet| $$
in $\SS_0$.
\end{cor}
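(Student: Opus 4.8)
The plan is to deduce this from Theorem~\ref{t:segal} together with the Quillen adjunction of Corollary~\ref{c:qe-reduced} and the two-out-of-three property. First I would prove the ``only if'' direction, which is the easy one: it is exactly the last sentence of Remark~\ref{r:easy-side}, namely that $|-|: \Seg \lrar \SS_0$ sends weak equivalences to weak equivalences. (Indeed, $|-|$ is left Quillen by Corollary~\ref{c:qe-reduced}, and since every object of $\sS_0$ is cofibrant, a left Quillen functor between model categories in which all objects are cofibrant preserves all weak equivalences.)

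For the ``if'' direction, suppose $f: A_\bullet \lrar B_\bullet$ is a map of reduced simplicial spaces with $|f|: |A_\bullet| \lrar |B_\bullet|$ a weak equivalence in $\SS_0$. Choose functorial fibrant replacements $A_\bullet \lrar A_\bullet^{\fib}$ and $B_\bullet \lrar B_\bullet^{\fib}$ in the Segal group model structure $\Seg$, fitting into a commutative square with $f$ and its fibrant replacement $f^{\fib}: A_\bullet^{\fib} \lrar B_\bullet^{\fib}$. By two-out-of-three it suffices to show $f^{\fib}$ is a weak equivalence in $\Seg$; since $A_\bullet^{\fib}$ and $B_\bullet^{\fib}$ are Segal groups, by Remark~\ref{r:locality} it is enough to show $f^{\fib}$ is a level-wise weak equivalence. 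Now consider the naturality square relating the map $C_\bullet \lrar \Om(|C_\bullet|^{\fib})_\bullet$ of Theorem~\ref{t:segal} for $C_\bullet = A_\bullet^{\fib}$ and $C_\bullet = B_\bullet^{\fib}$. Theorem~\ref{t:segal} tells us the two horizontal maps are level-wise equivalences, so by two-out-of-three $f^{\fib}$ is a level-wise equivalence if and only if $\Om(|f^{\fib}|^{\fib})_\bullet$ is. But the ``only if'' direction already established (applied to the trivial-cofibration maps $A_\bullet \lrar A_\bullet^{\fib}$ and $B_\bullet \lrar B_\bullet^{\fib}$, which are weak equivalences in $\Seg$) shows $|A_\bullet| \lrar |A_\bullet^{\fib}|$ and $|B_\bullet| \lrar |B_\bullet^{\fib}|$ are weak equivalences in $\SS_0$, and combined with the hypothesis on $|f|$ and two-out-of-three this gives that $|f^{\fib}|: |A_\bullet^{\fib}| \lrar |B_\bullet^{\fib}|$ is a weak equivalence in $\SS_0$; hence so is $|f^{\fib}|^{\fib}$, and applying the right Quillen functor $\Om$ to this weak equivalence between fibrant objects yields the desired level-wise equivalence.

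The only mild subtlety, and the step I would be most careful about, is bookkeeping the two different meanings of ``weak equivalence'' in play — level-wise (= Reedy, since everything is Reedy fibrant here) versus $\Seg$-local — and invoking Remark~\ref{r:locality} precisely at the point where we need to upgrade a level-wise statement about maps between Segal groups to a $\Seg$-equivalence. Everything else is a diagram chase with two-out-of-three once Theorem~\ref{t:segal} and the adjunction of Corollary~\ref{c:qe-reduced} are in hand.
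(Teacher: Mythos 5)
Your proof is correct, but it follows a genuinely different route from the paper's. The paper argues in two lines via the simplicial structure of the adjunction $|-|\dashv\Om$ of Corollary~\ref{c:qe-reduced}: since every object of $\sS_0$ is cofibrant, $|f|$ is a weak equivalence in $\SS_0$ exactly when $\Map_{\sS_0}(B_\bullet,\Om(S)_\bullet)\lrar\Map_{\sS_0}(A_\bullet,\Om(S)_\bullet)$ is an equivalence for every fibrant $S\in\SS_0$, and by Theorem~\ref{t:segal} (together with Remark~\ref{r:easy-side}) the objects $\Om(S)_\bullet$ are, up to level-wise equivalence, precisely the Segal groups, so this locality condition coincides with the defining description of the $\Seg$-equivalences. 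You replace this mapping-space comparison by a fibrant-replacement diagram chase: functorial fibrant replacement in $\Seg$, naturality of the comparison map $C_\bullet\lrar\Om(|C_\bullet|^{\fib})_\bullet$, Ken Brown's lemma for the right Quillen functor $\Om$, and repeated two-out-of-three. Your version buys independence from the simplicial enrichment (you only need the Quillen adjunction, functorial factorizations and Theorem~\ref{t:segal}), at the cost of length; the paper's version is shorter but leans on the simplicial structure and on the explicit characterization of $\Seg$-equivalences by locality against Segal groups. One small correction: Remark~\ref{r:locality} gives the implication you do not need (a $\Seg$-equivalence between Segal groups is level-wise); the implication you actually use --- that a level-wise equivalence between the fibrant replacements is a $\Seg$-equivalence --- is simply the fact that reduced Reedy weak equivalences remain weak equivalences in the left Bousfield localization, so the step stands, but it should be justified that way rather than by the Remark.
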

\begin{proof}
Since the Quillen adjunction in question is simplicial we see that $|f|$ is a weak equivalence in $\SS_0$ if and only if $f$ induces an equivalence
$$ \Map_{\sS_0}\left(B_\bullet,\Om(S)_\bullet\right) \lrar \Map_{\sS_0}\left(A_\bullet,\Om(S)_\bullet\right) $$
for every fibrant reduced space $S \in \SS_0$. The result now follows from Theorem~\ref{t:segal}.
\end{proof}

\begin{cor}\label{c:qe-reduced-equiv}
The simplicial Quillen adjunction
$$ \xymatrix@=13pt{\Seg\ar[rr]<1ex>^(0.6){|-|} &&  \SS_0 \ar[ll]<1ex>_(0.4){\upvdash}^(0.4){\Om}} $$
is a Quillen equivalence.
\end{cor}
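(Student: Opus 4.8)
The plan is to show that the simplicial Quillen adjunction $|-| \dashv \Om$ between $\Seg$ and $\SS_0$ is a Quillen equivalence by verifying the standard criterion: for every cofibrant $A_\bullet \in \Seg$ and every fibrant $S \in \SS_0$, a map $|A_\bullet| \lrar S$ is a weak equivalence in $\SS_0$ if and only if its adjoint $A_\bullet \lrar \Om(S)_\bullet$ is a weak equivalence in $\Seg$. Since every object of $\Seg$ is cofibrant (the cofibrations being the monomorphisms), this must be checked for all $A_\bullet$.

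First I would handle the "if" direction. Suppose $A_\bullet \lrar \Om(S)_\bullet$ is a weak equivalence in $\Seg$. By Remark~\ref{r:easy-side}, the realization functor sends weak equivalences in $\Seg$ to weak equivalences in $\SS_0$, so $|A_\bullet| \lrar |\Om(S)_\bullet|$ is a weak equivalence. It then suffices to show that the counit $|\Om(S)_\bullet| \lrar S$ is a weak equivalence for every fibrant reduced space $S$; composing with the previous map gives that $|A_\bullet| \lrar S$ is a weak equivalence. For the counit, note that by Remark~\ref{r:easy-side} the object $\Om(S)_\bullet$ is a Segal group, so by Theorem~\ref{t:segal} the natural map $\Om(S)_\bullet \lrar \Om(|\Om(S)_\bullet|^{\fib})_\bullet$ is a level-wise equivalence; an adjunction argument together with the fact that $S$ is fibrant then identifies the counit with a weak equivalence. (Equivalently, one invokes Corollary~\ref{c:realization}: the counit induces an equivalence on mapping spaces into every $\Om(S')_\bullet$ by Theorem~\ref{t:segal}.)

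Next I would treat the "only if" direction. Suppose $|A_\bullet| \lrar S$ is a weak equivalence in $\SS_0$. We want the adjoint $A_\bullet \lrar \Om(S)_\bullet$ to be a weak equivalence in $\Seg$. By Corollary~\ref{c:realization}, a map in $\Seg$ is a weak equivalence if and only if its realization is a weak equivalence in $\SS_0$. So it is enough to show that $|A_\bullet \lrar \Om(S)_\bullet|$ is a weak equivalence, i.e., that $|A_\bullet| \lrar |\Om(S)_\bullet|$ is a weak equivalence. Since $|A_\bullet| \lrar S$ is assumed to be a weak equivalence, by two-out-of-three it suffices to know that the composite $|A_\bullet| \lrar |\Om(S)_\bullet| \lrar S$ — which is exactly the counit precomposed with $|A_\bullet| \lrar |\Om(S)_\bullet|$ — equals the given map $|A_\bullet| \lrar S$ (this is the triangle identity), together with the fact, just established, that the counit $|\Om(S)_\bullet| \lrar S$ is a weak equivalence. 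Both facts combine to give the claim.

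The main obstacle is establishing that the counit $|\Om(S)_\bullet| \lrar S$ is a weak equivalence for fibrant reduced $S$; everything else is formal manipulation with two-out-of-three, the triangle identities, and Corollary~\ref{c:realization}. The counit statement is really the content of Segal's theorem (Theorem~\ref{t:segal}) read through the $|-|\dashv\Om$ adjunction: since $\Om(S)_\bullet$ is a Segal group, Theorem~\ref{t:segal} tells us $\Om(S)_\bullet \to \Om(|\Om(S)_\bullet|^{\fib})_\bullet$ is a level-wise equivalence, and since $S$ is fibrant one compares $|\Om(S)_\bullet|^{\fib}$ with $S$ via the adjunction unit/counit to conclude. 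I would present this counit computation carefully and let the two directions above follow by the bookkeeping indicated.
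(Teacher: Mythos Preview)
Your proposal is correct and reduces to the same essential ingredient as the paper: showing that for fibrant $S \in \SS_0$ the counit $|\Om(S)_\bullet| \lrar S$ is a weak equivalence, invoking Theorem~\ref{t:segal} applied to the Segal group $\Om(S)_\bullet$. The paper packages the argument slightly more efficiently: rather than verifying both directions of the Quillen-equivalence criterion separately, it observes via Corollary~\ref{c:realization} that $|-|$ preserves and reflects weak equivalences, so that it suffices to check only the derived counit. The one point where the paper is more explicit than your sketch is the final step of the counit computation: from the level-wise equivalence $\Om(S)_\bullet \simeq \Om(|\Om(S)_\bullet|^{\fib})_\bullet$ one extracts (in degree $1$) an equivalence on loop spaces, and then uses that both $S$ and $|\Om(S)_\bullet|$ are reduced, hence connected, to conclude that the counit is a weak equivalence. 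Your phrase ``compares $|\Om(S)_\bullet|^{\fib}$ with $S$ via the adjunction unit/counit'' hides exactly this connectedness argument, so when you write it out carefully be sure to make that step explicit.
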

\begin{proof}
From Corollary~\ref{c:realization} we get that $|-|$ preserves and reflects weak equivalences. It is hence enough to check that for every fibrant object $A \in \SS_0$ the unit map
$$ u:A \lrar |\Om(A)_\bullet| $$
is a weak equivalence. Since $\Om(A)_\bullet$ is a Segal group we get from Theorem~\ref{t:segal} that the map $u$ induces an equivalence on loop spaces. Since $A$ and $|\Om(A)_\bullet|$ are reduced (and in particular connected) this implies that $u$ is a weak equivalence.
\end{proof}

\subsubsection{ Relation with Bousfield's approach}

We have seen that Segal groups constitute a model for pointed connected spaces and hence for simplicial groups. There is another approach, based on an unpublished manuscript of A.~K.~Bousfield, which we now describe.

\begin{define}\label{d:bousfield}
Let $B_n \subseteq \Del^n$ be the simplicial subset given by the edges $\Del^{\{0,i\}}$ for $i=1,...,n$. We shall refer to $B_n$ as the $n$'th \textbf{Bousfield spine}.
\end{define}

The terminology of the following definition is taken from~\cite{Bergn}.

\begin{define}[\cite{Bou}]
A \textbf{reduced Bousfield-Segal space} is a Reedy fibrant reduced simplicial space $A_\bullet$ which is local with respect to the inclusion $B_n \subseteq \Del^n$ for every $n$. In particular, for every $n$ the induced map
$$ A_n = \Map(\Del^n,A) \lrar \Map(B_n,A) = A_1^n $$
is a weak equivalence.
\end{define}

In~\cite[Theorem $3.1$]{Bou} Bousfield proves that the adjunction
\begin{equation}
\xymatrix@=13pt{\sS_0\ar[rr]<1ex>^(0.6){|-|} &&  \SS_0 \ar[ll]<1ex>_(0.4){\upvdash}^(0.4){\Om}}
\end{equation}
induces an equivalence between the homotopy category of Bousfield-Segal spaces and the homotopy category of Kan simplicial sets. In particular, he shows that a reduced simplicial space is a Bousfield-Segal space if and only if it is levelwise equivalent to a reduced simplicial space of the form $\Om(X)_\bullet$ for some Kan simplicial set $X$. It then follows from Corollary~\ref{c:qe-reduced-equiv} and Theorem~\ref{t:segal} that a reduced simplicial space is a Bousfield-Segal space if and only if it is a Segal group.

Since the reduced Reedy model structure on $\sS_0$ is left proper and combinatorial one can take its left Bousfield localization with respect to the inclusions $B_n \hrar \Del^n$. The resulting model category was considered in~\cite{Bergn}. However, in light of~\cite{BergnE}, the argument appearing in~\cite[\S 6]{Bergn}, comparing this model category to the model category of simplicial groups, is only partial. On the other hand, it follows directly from the arguments above that this model category is in fact equal to the model category $\Seg$, and hence equivalent to the model category of simplicial groups by Corollary~\ref{c:qe-reduced-equiv}.

\subsection{Construction of the integral model structure }\label{ss:global-act}

Our goal in this subsection is to show that the model categories of Segal group actions constructed in~\cite{Pra} can be assembled to form an integral model structure. Let us begin with the basic definitions.

\begin{define}{\cite[Definition $3.1$]{Pra} (cf. \cite[3.1,3.4]{NSS})}\label{SGA}
Let $A_\bullet$ be a Segal group. A \emph{Segal group action} is a Reedy fibration of simplicial spaces $$p: X_\bullet\lrar \iota (A_\bullet) $$
such that for every $n$, the maps $$\xymatrix{X_n\ar@{>}[rr]<0.5ex>^(0.25){\sigma_{\{n\}}^*\times p_n} \ar@{>}[rr]<-0.5ex>_(0.25){\sigma_{\{0\}}^*\times p_n}
 & & X_0\times_{A_0} A_n\simeq X_0\times A_n,}$$ induced by the maps $\sigma_{\{0\}},\sigma_{\{n\}}:[0]\lrar [n]$, $0\mapsto 0$ and $0\mapsto n$,\\ 
are weak equivalences. Note that the $0$'th space $X_0$ should be considered as the underlying space on which the the loop space $A_1\simeq \Om |A_\bullet|$ coherently acts.

\end{define}

Let $\Del_{\inj} \subseteq \Del$ denote the subcategory consisting of all objects and all injective maps.
\begin{define}
A map of simplicial spaces $f: A_\bullet \lrar B_\bullet$ is said to be \textbf{equifibred} if for every map $[m] \lrar [n]$ in $\Del_{\inj}$ the induced square
$$ \xymatrix{
A_n \ar[r]\ar[d] & B_n \ar[d] \\
A_m \ar[r] & B_m \\
}$$
is homotopy Cartesian.
\end{define}

\begin{rem}
Since any map $\rho: [n] \lrar [m]$ in $\Del_{\inj}$ can be extended to a retract diagram $[n] \lrar [m] \lrar [n]$ inside $\Del$, it follows from the pasting lemma for homotopy Cartesian squares that a map of simplicial spaces $f: A_\bullet \lrar B_\bullet$ is equifibred if and only if for every map $[m] \lrar [n]$ in $\Del$ the induced square
$$ \xymatrix{
A_n \ar[r]\ar[d] & B_n \ar[d] \\
A_m \ar[r] & B_m \\
}$$
is homotopy Cartesian.
\end{rem}

\begin{pro}\label{p:char}
Let $A_\bullet$ be a Segal group. Then a map $X_\bullet \lrar \iota(A_\bullet)$ of simplicial spaces is a Segal group action if and only if it is an equifibred Reedy fibration.
\end{pro}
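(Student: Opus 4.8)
The plan is to prove both implications by analyzing the square associated to a map $[m]\lrar[n]$ and relating its homotopy Cartesianness to the Segal-type condition in Definition~\ref{SGA}. First I would observe that, since $X_\bullet \lrar \iota(A_\bullet)$ is assumed to be a Reedy fibration in both directions of the equivalence, each square
$$\xymatrix{X_n \ar[r]\ar[d] & A_n \ar[d] \\ X_m \ar[r] & A_m}$$
is automatically a strict pullback in simplicial sets when the vertical maps come from a map in $\Del_{\inj}$ that factors through $[0]$ — wait, more carefully: what we actually know is that $p$ is a Reedy fibration, so $X_n \lrar A_n$ is a Kan fibration for each $n$ and the square above is homotopy Cartesian iff the natural map $X_n \lrar X_m \times_{A_m} A_n$ is a weak equivalence. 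So the task reduces to understanding when these comparison maps are equivalences.

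For the forward direction, suppose $p$ is a Segal group action. The key case is the square attached to $\sigma_{\{0\}}\colon [0]\lrar[n]$ (or $\sigma_{\{n\}}$): Definition~\ref{SGA} says exactly that $X_n \lrar X_0 \times A_n$ is a weak equivalence, which since $A_0 = *$ is precisely the statement that the square for $[0]\lrar[n]$ is homotopy Cartesian. For a general map $[m]\lrar[n]$ in $\Del_{\inj}$, I would use the Segal condition on $A_\bullet$ together with the Segal condition on $X_\bullet$: writing $A_n \simeq A_1^{\times n}$ and, using that $X_\bullet$ is a Segal space over $A_\bullet$ (which will follow from $X_0 \times A_n \simeq X_n$ and the Segal structure on $A$, analogously to how Proposition~\ref{p:segal} propagates locality), deduce $X_n \simeq X_0 \times A_1^{\times n}$ compatibly with all face and degeneracy maps. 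Then the square for any $[m]\lrar[n]$ becomes, up to equivalence, a square of the form $X_0 \times A_1^{\times n} \lrar X_0 \times A_1^{\times m}$ over $A_1^{\times n}\lrar A_1^{\times m}$, which is manifestly a pullback on the nose (projection squares), hence homotopy Cartesian. This gives equifibredness.

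For the converse, suppose $p$ is an equifibred Reedy fibration. Applying equifibredness to the two maps $\sigma_{\{0\}}, \sigma_{\{n\}}\colon [0]\lrar[n]$ in $\Del_{\inj}$ gives that both squares
$$\xymatrix{X_n \ar[r]\ar[d] & A_n \ar[d] \\ X_0 \ar[r] & A_0 = *}$$
are homotopy Cartesian, i.e. $X_n \lrar X_0 \times_{A_0} A_n$ is a weak equivalence for each of the two structure maps — which is precisely the defining condition of Definition~\ref{SGA}. So $p$ is a Segal group action. The main obstacle, and the step I would spend the most care on, is the forward direction's claim that the Segal-type condition at the $[0]\lrar[n]$ squares together with $A_\bullet$ being a Segal group forces \emph{all} the $\Del_{\inj}$-squares to be homotopy Cartesian; this requires the kind of ``locality propagation'' argument modelled on the proof of Proposition~\ref{p:segal}, keeping track of which inner-horn-type pushouts the general face maps decompose into, and checking that $X_\bullet$ inherits enough Segal structure relative to $A_\bullet$. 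One clean way to organize this is to first establish that $X_\bullet$ is itself a Segal space (using that it fibers over the Segal space $A_\bullet$ with homotopy-Cartesian $[0]\lrar[n]$ squares), and then run the standard argument that over a Segal space, an equifibred-at-vertices fibration is equifibred.
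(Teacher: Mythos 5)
Your easy direction (equifibred Reedy fibration $\Rightarrow$ Segal group action) is exactly the paper's, and is fine. The gap is in the forward direction. Your central claim is that after trivializing $X_n \simeq X_0 \times A_1^{\times n}$ the squares attached to arbitrary maps of $\Del_{\inj}$ become ``projection squares, manifestly a pullback on the nose''. This is false as stated: the identifications provided by Definition~\ref{SGA} use the inclusion of the first (or last) vertex, and a structure map of $X_\bullet$ that does not preserve that vertex is \emph{not} a projection under either trivialization --- already $d_0\colon X_1 \lrar X_0$ becomes the (coherent) action map rather than the projection, and for an ``interior'' inclusion such as $\{1,2\} \subseteq [3]$ neither the first-vertex nor the last-vertex trivializations of source and target line up to give a projection. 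So exactly for those maps the homotopy Cartesianness still requires an argument, and your fallback --- ``establish that $X_\bullet$ is a Segal space and run the standard argument that an equifibred-at-vertices fibration over a Segal space is equifibred'' --- is precisely the assertion to be proved and is not supplied; the analogy with the locality-propagation in Proposition~\ref{p:segal} does not transfer directly, since that proposition is about locality with respect to maps in $\sS_0$, not about homotopy Cartesianness of the squares over $A_\bullet$.

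For comparison, the paper closes this gap by a purely formal pasting argument that needs no Segal or group structure on $A_\bullet$ at all: call $\sig\colon [m]\lrar [n]$ $f$-Cartesian if its square is homotopy Cartesian; this class is closed under composition and satisfies right cancellation (if $\sig$ and $\tau\circ\sig$ are $f$-Cartesian, so is $\tau$). Definition~\ref{SGA} says all inclusions $\sig_{\{0\}},\sig_{\{n\}}\colon [0]\lrar [n]$ are $f$-Cartesian; cancellation against $\sig_{\{0\}}$ gives that each initial-segment inclusion $[m]\lrar [n]$ is $f$-Cartesian, composing with $\sig_{\{m\}}\colon[0]\lrar[m]$ gives that \emph{every} vertex inclusion $\sig_{\{i\}}\colon[0]\lrar[n]$ is $f$-Cartesian, and then cancellation of a general injective $\sig$ with $\sig(0)=i$ against $\sig_{\{0\}}\colon[0]\lrar[m]$ and $\sig_{\{i\}}\colon[0]\lrar[n]$ finishes the proof. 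If you want to keep your approach, you would have to carry out in detail the step you flagged (e.g.\ by invoking invertibility in $A_\bullet$ to compare the two trivializations), but the 2-out-of-3 argument above is both shorter and more general.
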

\begin{proof}

Clearly every equifiberd Reedy fibration is a Segal group action. To prove the converse, let $f:X_\bullet \lrar \iota(A_\bullet)$ be a Segal group action. We will say that a map $\sig: [m] \lrar [n]$ in $\Del$ is $f$-Cartesian if the diagram
$$ \xymatrix{
X_n \ar[r]\ar[d] & A_n \ar[d] \\
X_m \ar[r] & A_m \\
}$$
is homotopy Cartesian. In order to prove the desired result we need to show that every map in $\Del_{\inj}$ is $f$-Cartesian. Since $f$ is a Segal group action we have by Definition~\ref{SGA} that $\sig_{\{0\}}: [0] \lrar [n]$ and $\sig_{\{n\}}: [0] \lrar [n]$ are $f$-Cartesian. From the pasting lemma for homotopy Cartesian squares we get that the set of $f$-Cartesian maps is closed under composition. Moreover, we know that if $\sig$ and $\tau \circ \sig$ are $f$-Cartesian then $\tau$ is $f$-Cartesian as well. Using these arguments we can prove the desired result in three steps:
\begin{enumerate}
\item
Since $\sig_{\{0\}}: [0] \lrar [m]$ and $\sig_{\{0\}}: [0] \lrar [n]$ are $f$-Cartesian we get that $\sig_{\{0,...,m\}}: [m] \lrar [n]$ is $f$-Cartesian.
\item
Since $\sig_{\{m\}}: [0] \lrar [m]$ and $\sig_{\{0,...,m\}}: [m] \lrar [n]$ are $f$-Cartesian we get that $\sig_{\{m\}}: [0] \lrar [n]$ is $f$-Cartesian.
\item
Since $\sig_{\{0\}}: [0] \lrar [m]$ and $\sig_{\{i\}}: [0] \lrar [n]$ are $f$-Cartesian it follows that every injective map $\sig: [m] \lrar [n]$ such that $\sig(0) = i$ is $f$-Cartesian.
\end{enumerate}
\end{proof}

We now wish to construct a model for coherent group actions using $\Seg$. For this, we shall consider the following model structure introduced by Rezk, Schwede and Shipley (\cite{RSS}):
\begin{thm}[\cite{RSS}]\label{t:rezk}\
\begin{enumerate}
\item
There exists a model structure on $\sS$, denoted $\sS_{\equ}$, such that
\begin{enumerate}
\item
The weak equivalences are the maps $f: X_\bullet \lrar Y_\bullet$ such that the induced map $|f|: |X_\bullet| \lrar |Y_\bullet|$ is a weak equivalence in $\SS$.
\item
The cofibrations are the monomorphisms.
\item
The fibrations are the equifibred Reedy fibrations.
\end{enumerate}
\item
The realization functor $|-|: \sS \lrar \SS$ fits into a Quillen equivalence
$$ \xymatrix@=13pt{\sS\ar[rr]<1ex>^(0.5){|-|} &&  \SS \ar[ll]<1ex>_(0.5){\upvdash}^(0.5){C}} $$
where $C(A)_n = \Map(\Del^n,A)$.
\end{enumerate}
\end{thm}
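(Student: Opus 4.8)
This is a theorem of Rezk--Schwede--Shipley, so the plan is to explain how the statement follows from their work and how to match up the stated description of the model structure with the one they construct. The starting point is the observation that $\sS = \SS^{\Del^{\op}}$ with the Reedy model structure is a simplicial combinatorial model category, and that the realization functor $|-| = \diag$ has a right adjoint $C$ with $C(A)_n = \Map(\Del^n, A)$; indeed $\diag$ is the left Kan extension along the diagonal $\Del \to \Del\times\Del$ restricted appropriately, and its right adjoint is exactly the cosimplicial-diagonal formula given. The goal is to transfer the Kan--Quillen model structure on $\SS$ along this adjunction and identify the resulting fibrations with the equifibred Reedy fibrations.

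First I would invoke the main result of \cite{RSS}: the class of maps $f$ in $\sS$ with $|f|$ a weak equivalence, together with the monomorphisms as cofibrations, determines a left Bousfield localization of the Reedy model structure $\sS$ (this is the ``diagonal'' or ``realization'' model structure). Left Bousfield localization of a left proper combinatorial simplicial model category exists, and it automatically has the same cofibrations (monomorphisms) and the stated weak equivalences; this gives (1)(a) and (1)(b). The content of \cite{RSS} is then the identification of the fibrant objects and, more generally, the fibrations: a Reedy fibration $f: X_\bullet \to Y_\bullet$ is a fibration in $\sS_{\equ}$ if and only if it is equifibred, i.e.\ all the naturality squares for maps in $\Del_{\inj}$ are homotopy Cartesian. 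The key input here is that $|-|$ detects and preserves homotopy pullbacks along such maps, so that a Reedy fibration is $|-|$-local precisely when realization commutes with the relevant limits, which is exactly the equifibred condition. I would cite the relevant statement in \cite{RSS} (their characterization of the fibrations in the realization model structure) rather than reprove it.

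For part (2), one needs that the Quillen adjunction $|-| \dashv C$ between $\sS_{\equ}$ and $\SS$ is a Quillen equivalence. Since $|-|$ preserves and reflects weak equivalences by construction of $\sS_{\equ}$, it suffices to check that for every fibrant (Kan) simplicial set $A$ the unit map $A \to |C(A)_\bullet|$ is a weak equivalence. But $|C(A)_\bullet| = \diag\left(\,[n] \mapsto \Map(\Del^n,A)\,\right)$, and there is a classical equivalence $\diag\left([n]\mapsto \Map(\Del^n,A)\right) \simeq A$ for Kan $A$ (the simplicial space $n \mapsto A^{\Del^n}$ is a Reedy-fibrant resolution of the constant simplicial space on $A$, whose diagonal is $A$ itself); this is again part of \cite{RSS}. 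Combining preservation/reflection of weak equivalences with the unit being an equivalence on fibrant objects yields the Quillen equivalence.

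The only genuine subtlety — the part that is not formal — is the identification of fibrations with equifibred Reedy fibrations, and in particular the verification that the localization is well behaved enough that its fibrant objects are exactly the equifibred Reedy-fibrant simplicial spaces; but this is precisely the theorem we are citing from \cite{RSS}, so in the present paper it can be taken as given. Thus the proof reduces to assembling these cited ingredients and checking that the bookkeeping (Reedy vs.\ injective model structure on $\sS$ — which coincide since $\Del^{\op}$ is a Reedy category with $\SS$ a model category, so Reedy cofibrations are monomorphisms) matches the phrasing in the statement.
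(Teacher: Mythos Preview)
Your approach matches the paper's almost exactly: part~(1) is simply cited from~\cite{RSS}, and for part~(2) both you and the paper observe that $|-|$ preserves and reflects weak equivalences, that every object of $\sS_{\equ}$ is cofibrant, and then reduce to checking that $|C(A)_\bullet| \simeq A$ for fibrant $A$ via the fact that $C(A)_\bullet$ is homotopy constant with value $A$. One small slip: the map $|C(A)_\bullet| \lrar A$ you are checking is the \emph{counit} of $|-| \dashv C$, not the unit, and it goes in the opposite direction to what you wrote; the argument itself is unaffected.
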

\begin{proof}
The first part is a particular case of Theorem $3.6$ of~\cite{RSS}. As for part $(2)$, it is clear that $|-|$ preserves cofibrations and trivial cofibrations. Furthermore, $|-|$ preserves and detects weak equivalences. Since every object is cofibrant it will be enough to prove that for every fibrant $B \in \SS$ the counit map
$$ |C(B)_\bullet| \lrar B $$
is a weak equivalence. But this follows from the fact that if $B$ is fibrant, $C(B)_\bullet$ is homotopy constant with value $B$.
\end{proof}

Now given an object $A_\bullet \in \sS_0$ we will consider the model category $\sS_{/\iota(A_\bullet)}$ endowed with the slice model structure inherited from $\sS_{\equ}$ and denote it by $\l(\sS_{/\iota(A_\bullet)}\r)_{\act}$. 
This notation is justified by the following Corollary of Proposition~\ref{p:char}.
\begin{cor}\label{c:char-2}
Let $A_\bullet$ be a Segal group. Then the fibrant(-cofibrant) objects of $\l(\sS_{/\iota(A_\bullet)}\r)_{\act}$ are precisely the Segal group actions.
\end{cor}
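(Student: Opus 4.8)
The plan is to unwind the definition of the slice model structure on $\l(\sS_{/\iota(A_\bullet)}\r)_{\act}$ and match it against Proposition~\ref{p:char}. Recall that in a slice model category $\M_{/Z}$ an object $p: X \lrar Z$ is fibrant if and only if $X$ is fibrant in $\M$ and $p$ is a fibration in $\M$, while it is cofibrant if and only if $X$ is cofibrant in $\M$. Since $\sS_{\equ}$ has all objects cofibrant (every monomorphism is a cofibration, in particular $\emptyset \hrar X_\bullet$), the cofibrancy condition is automatic, so the fibrant-cofibrant objects of $\l(\sS_{/\iota(A_\bullet)}\r)_{\act}$ are exactly the maps $p: X_\bullet \lrar \iota(A_\bullet)$ which are fibrations in $\sS_{\equ}$ with $\iota(A_\bullet)$ fibrant in $\sS_{\equ}$.

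First I would observe that since $A_\bullet$ is a Segal group, $\iota(A_\bullet)$ is indeed fibrant in $\sS_{\equ}$: a Segal group is Reedy fibrant by definition, and for a Reedy fibrant simplicial space the map to the point is an equifibred Reedy fibration precisely when the space is ``homotopy constant'' in the relevant sense — but more directly, the Segal group condition together with $A_0 = *$ forces all the maps $A_n \lrar A_1^{\times n}$ (equivalently the appropriate squares) to be homotopy Cartesian, which is exactly the statement that $\iota(A_\bullet) \lrar *$ is equifibred; combined with Reedy fibrancy this gives fibrancy in $\sS_{\equ}$ by Theorem~\ref{t:rezk}(1)(c). (Alternatively one can note that the Segal group condition of Definition~\ref{SGA} applied to the terminal action recovers exactly equifibredness of $A_\bullet \to *$.) Then, by Theorem~\ref{t:rezk}(1)(c) again, a map $p: X_\bullet \lrar \iota(A_\bullet)$ is a fibration in $\sS_{\equ}$ if and only if it is an equifibred Reedy fibration. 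Hence the fibrant-cofibrant objects of $\l(\sS_{/\iota(A_\bullet)}\r)_{\act}$ are precisely the equifibred Reedy fibrations over $\iota(A_\bullet)$, which by Proposition~\ref{p:char} are precisely the Segal group actions.

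The only real subtlety — and the step I expect to need the most care — is the fibrancy of $\iota(A_\bullet)$ in $\sS_{\equ}$, i.e.\ checking that the Segal group axioms imply that $\iota(A_\bullet) \lrar *$ is an equifibred Reedy fibration. Reedy fibrancy is part of the definition of a Segal group, so what must be verified is equifibredness: that for every $[m] \lrar [n]$ in $\Del_{\inj}$ the square with corners $A_n, A_m, *, *$ is homotopy Cartesian, i.e.\ that every structure map $A_n \lrar A_m$ is a weak equivalence. This follows from the Segal condition (which identifies $A_n \simeq A_1^{\times n}$ compatibly with the spine inclusions) together with $A_0 = *$ and the group condition that $\pi_0(A_1)$ is a group; concretely one runs the same kind of argument as in Proposition~\ref{p:char}, using that $\sigma_{\{0\}}, \sigma_{\{n\}}: [0] \lrar [n]$ induce equivalences $A_n \simeq A_0 \times A_n$ — but here this is literally the instance of Definition~\ref{SGA} for the canonical action of $A_\bullet$ on the point, so in fact ``every Segal group, viewed over itself via the identity, is a Segal group action'' already packages the needed statement. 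I would phrase the proof to exploit exactly this: the identity $\iota(A_\bullet) \lrar \iota(A_\bullet)$ is trivially a Segal group action, hence by Proposition~\ref{p:char} an equifibred Reedy fibration, hence $\iota(A_\bullet)$ is fibrant in $\sS_{\equ}$, and the rest is the formal slice-category argument above.
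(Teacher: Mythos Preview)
Your overall approach --- unwind the slice model structure and invoke Proposition~\ref{p:char} --- is exactly what the paper intends, and the paper treats the corollary as immediate on those grounds. However, there is a genuine error in your argument.

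You misstate the fibrancy condition in a slice category. In $\M_{/Z}$ the terminal object is $\Id_Z: Z \lrar Z$, and the unique map from $(p: X \lrar Z)$ to it is $p$ itself; hence $p$ is fibrant in $\M_{/Z}$ if and only if $p$ is a fibration in $\M$. There is \emph{no} additional requirement that $X$ or $Z$ be fibrant in $\M$. Once you use the correct statement, the corollary is the one-line argument: fibrant in $\l(\sS_{/\iota(A_\bullet)}\r)_{\act}$ $\Leftrightarrow$ fibration in $\sS_{\equ}$ $\Leftrightarrow$ equifibred Reedy fibration (Theorem~\ref{t:rezk}(1)(c)) $\Leftrightarrow$ Segal group action (Proposition~\ref{p:char}); cofibrancy is automatic.

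Your digression attempting to show that $\iota(A_\bullet)$ is fibrant in $\sS_{\equ}$ is therefore unnecessary, and in fact the claim is \emph{false} for any nontrivial Segal group. Fibrant objects of $\sS_{\equ}$ are Reedy fibrant simplicial spaces for which the map to $*$ is equifibred, i.e.\ every structure map $A_n \lrar A_m$ is a weak equivalence; in other words, the fibrant objects are the homotopically constant Reedy fibrant simplicial spaces (consistent with the Quillen equivalence $\sS_{\equ} \simeq \SS$). For a Segal group with $A_0 = *$ and $A_2 \simeq A_1 \times A_1$, the face map $A_2 \lrar A_1$ is not an equivalence unless $A_1$ is contractible. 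Your final ``cleaner'' argument also conflates two different things: the identity $\iota(A_\bullet) \lrar \iota(A_\bullet)$ being an equifibred Reedy fibration says nothing about the map $\iota(A_\bullet) \lrar *$. So drop that paragraph entirely; the remaining formal slice argument, corrected as above, is all that is needed.
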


\begin{rem}\label{r:jardine}
Note that every fibration in $\sS_{\equ}$ 
is in particular a Kan fibration in the sense of Jardine (see~\cite[Lemma $2.4$]{Jar}). Hence according to~\cite[Theorem $2.14$]{Jar} we see that the realization functor $|-|: \sS \lrar \SS$ preserves fibrations. Since $\SS$ is right proper one can deduce that $\sS$ is right proper as well. 
\end{rem}

Combining Remark~\ref{r:jardine} with \cite[Corollary 6.2]{HP} we hence obtain the following
\begin{cor}
The slice functor $\sS_{/(-)}:\sS \lrar \ModCat$ is proper and relative. 
\end{cor}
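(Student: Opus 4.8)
The plan is to verify the two hypotheses of Definition~\ref{d:proper} and Definition~\ref{d:relative} for the functor $\sS_{/(-)}:\sS \lrar \ModCat$, $A_\bullet \mapsto \l(\sS_{/\iota(A_\bullet)}\r)_{\act}$, by invoking the general slice machinery of~\cite[Corollary 6.2]{HP}. That corollary presumably says: if $\M$ is a right proper model category, then the functor sending an object $A$ to the slice model category $\M_{/A}$ is proper and relative. So the only thing one needs to supply is that the ambient model category --- here $\sS_{\equ}$, whose slices we are taking --- is right proper, together with the (automatic) observation that for a map $f\colon A_\bullet\lrar B_\bullet$ the induced adjunction is the usual slice adjunction $f_!\dashv f^*$ given by composition-with-$f$ and pullback-along-$f$.

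First I would pin down right-properness. Remark~\ref{r:jardine} already establishes that $\sS$ with the \emph{Reedy} model structure is right proper, using Jardine's result that $|-|\colon\sS\lrar\SS$ preserves fibrations and that $\SS$ is right proper. For the present statement, however, we need right-properness of $\sS_{\equ}$, not of the Reedy structure. Here I would argue that a pullback of an $\sS_{\equ}$-weak equivalence along an $\sS_{\equ}$-fibration is again an $\sS_{\equ}$-weak equivalence: by Theorem~\ref{t:rezk}(1)(c) an $\sS_{\equ}$-fibration is an equifibred Reedy fibration, hence in particular a Reedy fibration, and by (1)(a) an $\sS_{\equ}$-weak equivalence is a $|-|$-equivalence. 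Applying the realization functor --- which commutes with pullbacks (it is a left adjoint, and more concretely $\diag$ preserves finite limits) and, by Remark~\ref{r:jardine}, sends the fibration to a Kan fibration --- the pullback square becomes a pullback of a weak equivalence along a Kan fibration in $\SS$, hence a weak equivalence by right-properness of $\SS$; pulling this back through the definition of $\sS_{\equ}$-equivalences gives the claim. Thus $\sS_{\equ}$ is right proper.

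Next I would feed this into~\cite[Corollary 6.2]{HP}. Properness of the slice functor amounts to: for $f\colon A_\bullet\lrar B_\bullet$ a trivial cofibration in $\sS_{\equ}$, the pushforward $f_!=f_*$ preserves weak equivalences, and for $f$ a trivial fibration, the pullback $f^*$ preserves weak equivalences. The first holds because $f_!$ on underlying objects is the identity and the weak equivalences of the slices are detected on underlying objects. The second is exactly an instance of right-properness: pulling back along a trivial fibration preserves weak equivalences in any right proper model category. Relativity amounts to: for $f$ an $\sS_{\equ}$-weak equivalence, $f_!\dashv f^*$ is a Quillen equivalence $\l(\sS_{/\iota(A_\bullet)}\r)_{\act}\lrar \l(\sS_{/\iota(B_\bullet)}\r)_{\act}$, which for slice model categories over a right proper base is a standard fact (a weak equivalence of base objects induces a Quillen equivalence of slices). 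All of this is packaged in~\cite[Corollary 6.2]{HP}, so the proof reduces to citing it once right-properness is in hand.

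The main obstacle is the right-properness verification for $\sS_{\equ}$ rather than for the Reedy structure: one must be careful that an $\sS_{\equ}$-fibration is still a Reedy fibration (so that Jardine's theorem applies to give a Kan fibration after realization), and that $\sS_{\equ}$-weak equivalences are created by $|-|$, so that right-properness descends from $\SS$ along the realization functor. Everything else --- the identification of $f_!$, $f^*$ and the deduction of properness/relativity from~\cite[Corollary 6.2]{HP} --- is formal. Concretely the proof reads: \emph{By Remark~\ref{r:jardine}, $\sS_{\equ}$-fibrations are Kan fibrations in the sense of Jardine and $|-|$ preserves them; since $|-|$ preserves pullbacks and creates $\sS_{\equ}$-weak equivalences, right-properness of $\SS$ implies that $\sS_{\equ}$ is right proper. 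The claim now follows from~\cite[Corollary 6.2]{HP}.}
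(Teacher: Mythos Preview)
Your proposal is correct and follows exactly the paper's approach: establish right-properness of $\sS_{\equ}$ and then invoke~\cite[Corollary 6.2]{HP}. One small misreading: Remark~\ref{r:jardine} is already about $\sS_{\equ}$, not about the Reedy structure --- it says that $\sS_{\equ}$-fibrations are Kan fibrations in Jardine's sense, that realization therefore preserves them, and that right-properness of $\SS$ then gives right-properness of (what in context is) $\sS_{\equ}$; so your paragraph re-deriving right-properness of $\sS_{\equ}$ is simply unpacking what the remark already contains, and your final one-line proof coincides with the paper's ``combine Remark~\ref{r:jardine} with~\cite[Corollary 6.2]{HP}''.
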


We warn the reader that the functor $\iota: \sS_0 \lrar \sS_{\equ}$ is \textbf{not} a Quillen functor. However, it does preserve weak equivalences, trivial cofibrations and trivial fibrations (the latter consists of trivial Reedy fibrations in both cases). In light of this and the above remark, it follows that the association $A_\bullet \mapsto \sS_{/\iota(A_\bullet)}$ does determine a proper relative functor \begin{equation}\label{e:W}\W:\sS_0 \lrar \ModCat. \end{equation} We hence obtain the following conclusion:
\begin{cor}
The integral model structure $$\displaystyle\mathop{\int}_{A_\bullet\in\l(\sS_0\r)_{\segal}}\left(\sS_{/\iota(A_\bullet)}\right)_{\act}$$ exist and its fibrant-cofibrant objects are precisely the Segal group actions. 
\end{cor}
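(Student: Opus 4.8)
The plan is to combine the three main inputs assembled above: (i) Theorem~\ref{model structure}, which produces the integral model structure from a proper relative functor; (ii) the identification, recorded just above, of $\W:\sS_0 \lrar \ModCat$, $A_\bullet \mapsto \left(\sS_{/\iota(A_\bullet)}\right)_{\act}$, as a proper relative functor; and (iii) Corollary~\ref{c:char-2}, which characterizes the fibrant-cofibrant objects of each fiber. The existence of the integral model structure is then immediate: apply Theorem~\ref{model structure} to $\W$. So the only content left is the characterization of the fibrant-cofibrant objects.

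For that, recall the characterization of fibrations and cofibrations in the integral model structure (Definition~\ref{d:model}). An object of $\int_{A_\bullet}\left(\sS_{/\iota(A_\bullet)}\right)_{\act}$ is a pair $\left(A_\bullet, (p: X_\bullet \lrar \iota(A_\bullet))\right)$. It is cofibrant if and only if the unique map from the initial object $\left(\emptyset, \emptyset \lrar \iota(\emptyset)\right)$ is a cofibration; by Definition~\ref{d:model}(3) this means $\emptyset \lrar A_\bullet$ is a cofibration in $\left(\sS_0\right)_{\segal}$ — automatic, since every object is cofibrant there, as all cofibrations are monomorphisms — and that the induced map into $X_\bullet$ over $\iota(A_\bullet)$ is a cofibration in the fiber, which amounts to $X_\bullet$ being cofibrant in $\left(\sS_{/\iota(A_\bullet)}\right)_{\act}$. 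Dually, $\left(A_\bullet, p\right)$ is fibrant if and only if the map to the terminal object $\left(\ast, \mathrm{id}\right)$ is a fibration; by Definition~\ref{d:model}(2) this says $A_\bullet \lrar \ast$ is a fibration in $\left(\sS_0\right)_{\segal}$ — i.e. $A_\bullet$ is a Segal group, since the fibrant objects of the Segal group model structure are exactly the Segal groups — and the adjoint of the structure map is a fibration in the fiber over $A_\bullet$, which (the terminal object of the fiber being $\iota(A_\bullet)\lrar\iota(A_\bullet)$) is precisely the condition that $X_\bullet$ is fibrant in $\left(\sS_{/\iota(A_\bullet)}\right)_{\act}$.

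Putting the two together: $\left(A_\bullet, p\right)$ is fibrant-cofibrant in the integral model structure if and only if $A_\bullet$ is a Segal group and $p: X_\bullet \lrar \iota(A_\bullet)$ is fibrant-cofibrant in $\left(\sS_{/\iota(A_\bullet)}\right)_{\act}$. By Corollary~\ref{c:char-2} the latter condition is exactly that $p$ be a Segal group action (and since the ambient fibrant-cofibrant objects of $\sS_{\equ}$ are all objects, cofibrancy in the slice is automatic, so the fibrant-cofibrant objects of the slice are precisely the Segal group actions). Hence the fibrant-cofibrant objects of $\int_{A_\bullet}\left(\sS_{/\iota(A_\bullet)}\right)_{\act}$ are exactly the Segal group actions, as claimed.

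The one point requiring a little care — and the only place where something could go wrong — is the bookkeeping around $\iota$: the functor $\iota:\sS_0 \lrar \sS_{\equ}$ is not left Quillen, so one must be sure that the fiber $\W(A_\bullet) = \left(\sS_{/\iota(A_\bullet)}\right)_{\act}$ is genuinely the slice of $\sS_{\equ}$ over the object $\iota(A_\bullet)$, and that ``fibrant in this slice'' unwinds to ``equifibred Reedy fibration over $\iota(A_\bullet)$'' via Theorem~\ref{t:rezk} and Proposition~\ref{p:char}. This is exactly what Corollary~\ref{c:char-2} packages, so no new argument is needed; I would simply cite it. I expect no real obstacle beyond this unwinding of definitions.
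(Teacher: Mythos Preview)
Your proposal is correct and follows precisely the route the paper intends: the corollary is stated without proof because it is an immediate consequence of Theorem~\ref{model structure} applied to the proper relative functor $\W$ (established just before the corollary), together with Corollary~\ref{c:char-2} for the fiberwise identification and the observation that all objects are cofibrant in both $\Seg$ and $\sS_{\equ}$. Your unpacking of Definition~\ref{d:model} to read off fibrancy and cofibrancy in the integral model structure is exactly the intended argument.
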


\begin{rem}
Note that a map of Segal group actions over different Segal groups is simply a commutative square of simplicial spaces
$$\xymatrix{X_\bullet \ar[r]\ar[d] &A_\bullet\ar[d]\\ Y_\bullet\ar[r] & B_\bullet.}$$ 
As we shall see later, under the Quillen equivalence of Corollary \ref{c:equivalence-3} such a map corresponds to a map of (ordinary) group actions $(G,X)\lrar (H,Y)$, namely a map of simplicial groups $G\lrar H$, and a map of spaces $X\lrar Y$ which is $G$-equivariant when $Y$ is considered as a $G$-space via the map $G\lrar H$.
\end{rem}%

\subsection{Rectification of Segal group actions}\label{ss:strict-vs}

The purpose of this subsection is to prove that the integral model category constructed in \S\ref{ss:global-act} is equivalent to the two model structures recalled in \S\ref{ss:global-strict}. This can be viewed as a rectification theorem for Segal group actions in a global setting.

In light of Theorem~\ref{qa}, we need to provide a compatible family of Quillen adjunctions

$$\xymatrix{
\Xi^L_A:\l(\sS_{/\iota(\Om(A)_\bullet})\r)_{\act} \ar[r]<1ex> & \SS_{/\iota (A)}:\Xi^R_A\ar[l]<1ex>_(0.4){\upvdash}}$$
indexed by $A \in \SS_0$, which are equivalences whenever $A$ is fibrant. This can be done as follows. Let $A \in \SS_0$ be a reduced space and $X$ a space equipped with a map $X \lrar \iota (A)$. Define $\Xi^R_A(X)$ to be the pullback in the square
$$ \xymatrix{
\Xi^R_A(X) \ar[r]\ar[d] &  \ar[d] C(X)_\bullet \\
\iota(\Om(A))_\bullet \ar[r] & C(\iota (A))_\bullet \\
}$$
where $C(-)_\bullet = \Map(\Del^\bullet,-): \SS \lrar \sS$ is the functor discussed in Theorem~\ref{t:rezk} $(2)$. Dually, if $f:X_\bullet \lrar \iota(\Om(A)_\bullet)$ is an object of $\sS_{/\iota(\Om(A)_\bullet)}$ then $\Xi^L_A(X_\bullet)$ is given by 
$$ \Xi^L_A(X_\bullet) = |X_\bullet| $$ 
with the map $|X_\bullet| \lrar A$ given by the composition
$$ |X_\bullet| \lrar \left|\left(X^{\red}_\bullet\right)\right| \lrar A $$
where the second map is the adjoint of the map $X^{\red}_\bullet \lrar \Om(A)_\bullet$ which in turn is the adjoint of $f$.

It is straightforward to verify that $\Xi^L_A \dashv \Xi^R_A$ forms an adjunction. Furthermore, since the right Quillen functors for both $\W$ and $\V$ are defined via pullbacks (see \ref{e:W} and \ref{e:V}), it is straightforward to verify that $\Xi^L_A \dashv \Xi^R_A$ carries natural compatibility isomorphisms exhibiting it as a right Quillen morphism. We now claim that this is actually a right Quillen equivalence.

\begin{pro}\label{p:qe-reduced-2}
For every fibrant $A \in \SS_0$, the adjunction
$$\xymatrix{
\Xi^L_A:\l(\sS_{/\iota(\Om(A)_\bullet})\r)_{\act} \ar[r]<1ex> & \SS_{/\iota A}:\Xi^R_A\ar[l]<1ex>_(0.4){\upvdash}}$$
is a Quillen equivalence.
\end{pro}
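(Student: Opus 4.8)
The plan is to deduce this from the Quillen equivalences already established, together with the invariance theorem in the slice. First I would observe that the adjunction $\Xi^L_A \dashv \Xi^R_A$ factors, up to the obvious natural isomorphisms, through the intermediate slice category $\l(\sS_{/\iota(\Om(A)_\bullet)}\r)_{\act}$. Concretely, the realization functor $|-|: \sS \lrar \SS$ of Theorem~\ref{t:rezk}$(2)$ is a left Quillen functor with right adjoint $C(-)_\bullet$, and it induces, by slicing over $\iota(\Om(A)_\bullet)$ and then post-composing with the map $|\iota(\Om(A)_\bullet)| \simeq |{\Om(A)_\bullet}| \lrar A$ (which is a weak equivalence in $\SS_0$ by Corollary~\ref{c:qe-reduced-equiv} when $A$ is fibrant), a composite of Quillen adjunctions
$$ \l(\sS_{/\iota(\Om(A)_\bullet)}\r)_{\act} \lrar \SS_{/|\iota(\Om(A)_\bullet)|} \lrar \SS_{/A} . $$
The left functor here is exactly $\Xi^L_A$ by construction (using that $|X_\bullet| = |X^{\red}_\bullet|$ since realization is insensitive to reducification, or rather factors through it compatibly), and one checks the right adjoints match: the pullback defining $\Xi^R_A(X)$ is precisely the composite of the base-change functor $\SS_{/A} \lrar \SS_{/|\iota(\Om(A)_\bullet)|}$ along $|\iota(\Om(A)_\bullet)| \lrar A$ followed by the right adjoint $C(-)_\bullet$ of slice-realization, reindexed via $\iota(\Om(A)_\bullet) \lrar C(\iota(A))_\bullet$. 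Since all of these are Quillen adjunctions, so is $\Xi^L_A \dashv \Xi^R_A$, and to prove it is a Quillen equivalence it suffices to prove each factor is.

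Next I would handle the two factors. The second factor, the base-change adjunction $\SS_{/|\iota(\Om(A)_\bullet)|} \lrar \SS_{/A}$ along a weak equivalence of fibrant objects in $\SS_0$, is a Quillen equivalence by the standard fact that slicing a right proper model category over a weak equivalence between fibrant objects yields a Quillen equivalence. The first factor is the slice over $\iota(\Om(A)_\bullet)$ of the Quillen equivalence $|-|: \sS_{\equ} \rightleftarrows \SS : C$ of Theorem~\ref{t:rezk}$(2)$. To see that slicing preserves the Quillen equivalence here, I would use that $|-|$ preserves and detects weak equivalences (as stated in the proof of Theorem~\ref{t:rezk}), so the derived left functor on slices also detects weak equivalences, and then check that for a fibrant object $X \lrar \iota(\Om(A)_\bullet)$ the derived unit is an equivalence --- which reduces, via the pullback square and right properness of $\sS$ (Remark~\ref{r:jardine}), to the derived unit of the unsliced adjunction $\sS_{\equ} \rightleftarrows \SS$ being an equivalence, which it is. Alternatively, one may invoke directly a lemma on slicing Quillen equivalences along fibrant objects.

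The main obstacle I anticipate is the bookkeeping in the first paragraph: verifying that the functors $\Xi^L_A, \Xi^R_A$ as literally defined (passing through $(-)^{\red}$ on the left, and through the two-step pullback with the map to $C(\iota(A))_\bullet$ on the right) really do agree with the clean composite of slice-realization and base-change. The subtlety is that $\Om(A)_\bullet$ is a \emph{reduced} simplicial space while $X_\bullet$ need not be, so one must be careful that $|X_\bullet| \to |X^{\red}_\bullet|$ is used correctly and that the compatibility isomorphisms of the right Quillen morphism structure (already noted in the text as "straightforward to verify") are the ones that make the factorization commute. Once the identification of $\Xi^L_A \dashv \Xi^R_A$ with the composite is pinned down, the Quillen-equivalence claim follows formally from the two lemmas on slicing (of a Quillen equivalence, and of a right proper model category over a weak equivalence of fibrant objects), using fibrancy of $A$ to guarantee $|\Om(A)_\bullet| \lrar A$ is a weak equivalence via Corollary~\ref{c:qe-reduced-equiv}.
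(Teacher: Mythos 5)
Your factorization of $\Xi^L_A \dashv \Xi^R_A$ as (the slice of the Rezk--Schwede--Shipley adjunction over $\iota(\Om(A)_\bullet)$) followed by (base change along $|\iota(\Om(A)_\bullet)| \lrar A$) is correct at the level of adjunctions, and the second factor is indeed a Quillen equivalence --- though not quite for the reason you state: $|\iota(\Om(A)_\bullet)|$ is not known to be fibrant in $\SS$, so a ``weak equivalence between fibrant objects'' lemma is not available; what saves this step is right properness of $\SS$ alone, which suffices for base change along an arbitrary weak equivalence. The genuine gap is the first factor. A Quillen equivalence does not automatically slice over an arbitrary object, and here neither candidate base is fibrant: $\iota(\Om(A)_\bullet)$ is a Segal group, hence not homotopy constant, hence not fibrant in $\sS_{\equ}$, and $|\iota(\Om(A)_\bullet)|$, being the diagonal of a Reedy fibrant simplicial space, is not known to be a Kan complex. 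So the ``lemma on slicing Quillen equivalences along fibrant objects'' you invoke does not apply, and your direct verification also breaks down: the relevant criterion is that the derived \emph{counit} be an equivalence at every fibrant object $Y \lrar |\iota(\Om(A)_\bullet)|$ of the \emph{target} slice (not the derived unit at fibrant objects of the source, as you write), and checking it requires knowing that $|C(Y)_\bullet| \lrar Y$ is a weak equivalence. Theorem~\ref{t:rezk}$(2)$ gives this only for Kan $Y$; here $Y$ is merely a fibration over a possibly non-Kan base, and $C = \Map(\Del^\bullet,-)$ does not have the correct homotopy type on non-Kan simplicial sets, so the claimed reduction ``to the derived unit of the unsliced adjunction'' is not available.

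This is precisely the difficulty the paper's proof is organized to avoid: it slices over $A$ itself, which is a Kan complex because fibrant objects of $\SS_0$ are fibrant in $\SS$ (\cite[Lemma $\mathrm{V}.6.6$]{GJ}), so a fibrant object $X \lrar \iota(A)$ has Kan total space and Theorem~\ref{t:rezk}$(2)$ applies to both $X$ and $A$. The paper then argues in one step: $\Xi^L_A$ preserves (trivial) cofibrations and preserves and reflects weak equivalences, every object of the source is cofibrant, and the counit $\Xi^L_A\Xi^R_A(X) \lrar X$ at fibrant $X \lrar \iota(A)$ is an equivalence, using that realization preserves the defining pullback, that $|C(X)_\bullet| \lrar |C(A)_\bullet|$ is a Kan fibration by Jardine's theorem (Remark~\ref{r:jardine}), right properness of $\SS$, and Corollary~\ref{c:qe-reduced-equiv} to see that $|\iota(\Om(A)_\bullet)| \lrar A$ is a weak equivalence. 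If you wish to keep your two-step structure you would need to arrange the intermediate base to be fibrant --- in effect slicing over $A$ from the start --- at which point you are reproducing the paper's argument rather than shortcutting it.
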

\begin{proof}
It is evident from the definitions that $\Xi^L_A$ preserves cofibrations and trivial cofibrations. Furthermore, the functor $\Xi^L_A$ preserves and reflects weak equivalences. Since every object in $\sS_{/\iota\Om(A)}$ is cofibrant it will now suffice to show that for every fibrant object $X \lrar \iota A$ in $\SS_{/\iota A}$ the counit map
$$ \Xi_A^L\left(\Xi^R_A(X)\right) \lrar X $$
is a weak equivalence. Consider the diagram
$$ \xymatrix{
\left|\Xi^R_A(X)\right| \ar[r]\ar[d] &  \ar[d] \left|C(X)_\bullet\right| \ar^(0.6){\sim}[r] & X \ar[d] \\
\left|\iota\Om(A)_\bullet\right| \ar[r] & \left|C(A)_\bullet\right| \ar^(0.6){\sim}[r] & A\\
}$$
Since $X$ is fibrant in $\SS_{/\iota A}$ we know that the map $X \lrar\iota(A)$ is a fibration in $\SS$. Since $A$ in turn is fibrant in $\SS_0$ it is also fibrant in $\SS$ (\cite[Lemma $\mathrm{V}.6.6$]{GJ}) and so $X$ is fibrant in $\SS$. From Theorem~\ref{t:rezk} $(2)$ we then deduce that the maps on the right square are weak equivalences. 

Since $A$ is fibrant we get from Corollary~\ref{c:qe-reduced-equiv} that the composition of the bottom horizontal maps is a weak equivalence and hence the left-bottom horizontal map is a weak equivalence by $2$-out-of-$3$. 

Now since the realization functor $|-|$ preserves pullbacks the left square is a pullback square. Since $C(X)_\bullet \lrar C(A)_\bullet$ is a fibration in $\sS$ it is also a Kan fibration in the sense of Jardine~\cite{Jar}. Using the same argument as in Remark~\ref{r:jardine} we obtain that the middle vertical map is a fibration. Since $\SS$ is right proper we deduce that the map
$$ \Xi_A^L\left(\Xi^R_A(X)\right) = \left|\Xi^R_A(X)\right| \lrar \left|C(X)_\bullet\right| $$
is a weak equivalence and hence the composite
$$ \Xi_A^L\left(\Xi^R_A(X)\right) \lrar X $$
is a weak equivalence as well.
\end{proof}

In light of Corollary~\ref{c:qe-reduced-equiv}, Proposition~\ref{p:qe-reduced-2} and Theorem~\ref{qa} we obtain the following
\begin{cor}\label{c:equivalence-2}
There exists a Quillen equivalence
$$
\xymatrix{\Psi^L: \displaystyle\mathop{\int}_{A_\bullet \in \l(\sS_0\r)_{\segal}}\l(\sS_{/\iota(A_\bullet )}\r)_{\act} \ar[r]<1ex> & \displaystyle\mathop{\int}_{B \in \SS_0}\SS_{/\iota B}:\Psi^R\ar[l]<1ex>_(0.4){\upvdash}.}
$$
\end{cor}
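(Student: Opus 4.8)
The plan is to recognize this corollary as a direct application of the invariance theorem~\ref{qa}, all the substantive work having already been carried out. The base datum will be the realization--path-object adjunction $|-| \dashv \Om$ between $\Seg$ and $\SS_0$, which is a Quillen equivalence by Corollary~\ref{c:qe-reduced-equiv}. On the fibre side the relevant functors are $\W: \Seg \lrar \ModCat$, $A_\bullet \mapsto \l(\sS_{/\iota(A_\bullet)}\r)_{\act}$ of~\eqref{e:W}, and $\V: \SS_0 \lrar \ModCat$, $B \mapsto \SS_{/\iota B}$ of~\eqref{e:V}, both of which are proper and relative.

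First I would observe that the compatible family $\l(\Xi^L_A,\Xi^R_A\r)_{A\in\SS_0}$ constructed just before Proposition~\ref{p:qe-reduced-2} is a right Quillen morphism $\W\circ\Om \Rightarrow \V$ over $|-|\dashv\Om$; the required pseudo-naturality isomorphisms are the evident ones, coming from the fact that the right adjoints of $\V$, of $\W$, and each $\Xi^R_A$ are all defined by pullback, as noted in the paragraph preceding the proposition. Next, Proposition~\ref{p:qe-reduced-2} states precisely that $\Xi^L_A\dashv\Xi^R_A$ is a Quillen equivalence for every fibrant $A\in\SS_0$, i.e.\ that this right Quillen morphism is a right Quillen equivalence in the sense of Definition~\ref{d:left-right-equiv}. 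Applying the ``right morphism'' half of Theorem~\ref{qa} then yields the desired Quillen equivalence $\Psi^L\dashv\Psi^R$ between $\int_{A_\bullet\in\Seg}\l(\sS_{/\iota(A_\bullet)}\r)_{\act}$ and $\int_{B\in\SS_0}\SS_{/\iota B}$; explicitly $\Psi^R$ sends $(B, X\lrar\iota B)$ to $(\Om(B)_\bullet, \Xi^R_B(X))$ and is thus computed fibrewise by the pullbacks defining $\Xi^R$, while $\Psi^L$ is assembled from realization (with reducification) and the functors $\Xi^L$.

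The corollary itself is not where any real obstacle lies --- it is the bookkeeping that packages three earlier results, namely Corollary~\ref{c:qe-reduced-equiv}, Proposition~\ref{p:qe-reduced-2} and Theorem~\ref{qa}. The one point I would make sure to address explicitly is that $\W$ must be proper and relative with respect to the \emph{localized} base $\Seg$, and not merely with respect to the reduced Reedy structure on $\sS_0$ for which this was first verified; this is exactly the content of the corollary stated immediately after~\eqref{e:W}, and it reduces to the fact that the equifibred slice model structures $\l(\sS_{/\iota(A_\bullet)}\r)_{\act}$ depend on $A_\bullet$ only through $|A_\bullet|$ up to Quillen equivalence --- so that a $\Seg$-weak equivalence $A_\bullet\lrar B_\bullet$, being a realization equivalence by Corollary~\ref{c:realization}, induces a Quillen equivalence on slices --- which in turn follows from Theorem~\ref{t:rezk} together with the right properness of $\sS$ recorded in Remark~\ref{r:jardine}. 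Granting that, the remainder of the argument is purely formal, and the genuine difficulty was already absorbed into the proof of Proposition~\ref{p:qe-reduced-2} (which itself rests on Theorem~\ref{t:segal} and on right properness via Remark~\ref{r:jardine}).
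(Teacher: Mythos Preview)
Your proposal is correct and follows essentially the same approach as the paper: the paper's proof is the single sentence ``In light of Corollary~\ref{c:qe-reduced-equiv}, Proposition~\ref{p:qe-reduced-2} and Theorem~\ref{qa} we obtain the following,'' and you have simply unpacked that sentence. Your extra paragraph checking that $\W$ remains proper and relative over the localized base $\Seg$ (rather than merely over the reduced Reedy structure) is a point the paper treats rather briefly, so your elaboration is a welcome addition rather than a deviation.
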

For future reference we record the following
\begin{cor}\label{c:equivalence-3}
The Quillen equivalences of Corollaries \ref{c:equivalence-1} and \ref{c:equivalence-2} compose and yield a Quillen equivalence 
$$
\xymatrix{\Lambda^L: \displaystyle\mathop{\int}_{A_\bullet \in \l(\sS_0\r)_{\segal}}\l(\sS_{/\iota(X_\bullet )}\r)_{\act} \ar[r]<1ex> & \displaystyle\mathop{\int}_{G \in \sGr}\SS^{\B G}:\Lambda^R\ar[l]<1ex>_(0.4){\upvdash}.}
$$

between the integral model structures for strict group actions and Segal group actions.

\end{cor}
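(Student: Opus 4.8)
The plan is to prove Corollary~\ref{c:equivalence-3} purely formally, by composing the two Quillen equivalences already constructed. By Corollary~\ref{c:equivalence-1} we have a Quillen equivalence
$$\xymatrix{\Phi^L: \displaystyle\mathop{\int}_{B\in\SS_0}\SS_{/\iota(B)}\ar[r]<1ex> & \displaystyle\mathop{\int}_{G\in\sGr}\SS^{\B G}:\Phi^R\ar[l]<1ex>_(0.5){\upvdash},}$$
and by Corollary~\ref{c:equivalence-2} we have a Quillen equivalence
$$\xymatrix{\Psi^L: \displaystyle\mathop{\int}_{A_\bullet\in\l(\sS_0\r)_{\segal}}\l(\sS_{/\iota(A_\bullet)}\r)_{\act}\ar[r]<1ex> & \displaystyle\mathop{\int}_{B\in\SS_0}\SS_{/\iota B}:\Psi^R\ar[l]<1ex>_(0.4){\upvdash}.}$$
The first step is simply to observe that the target of $\Psi^L$ coincides with the source of $\Phi^L$ on the nose, so the composites $\Lambda^L := \Phi^L\circ\Psi^L$ and $\Lambda^R := \Psi^R\circ\Phi^R$ are defined and form an adjunction.

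The second step is to recall the general fact that Quillen equivalences are closed under composition: if $(\Psi^L,\Psi^R)$ and $(\Phi^L,\Phi^R)$ are both Quillen equivalences, then so is $(\Phi^L\circ\Psi^L,\ \Psi^R\circ\Phi^R)$. This is completely standard --- on the level of homotopy categories the total derived functors compose to give an equivalence, using that $\Psi^L$ preserves cofibrant objects and weak equivalences between them (so that $\mathbb{L}(\Phi^L\circ\Psi^L)\simeq \mathbb{L}\Phi^L\circ\mathbb{L}\Psi^L$) and dually for the right adjoints. Since nothing here is specific to integral model categories, no further work is needed.

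The only mild subtlety, and the place where one must be slightly careful, is bookkeeping of the indexing categories: the source of $\Lambda^L$ is written $\int_{A_\bullet\in(\sS_0)_{\segal}}(\sS_{/\iota(X_\bullet)})_{\act}$, where the dummy variable in the fiber should of course be read as $A_\bullet$ (the notation $X_\bullet$ in the statement is just the generic name for an object of the fiber $\sS_{/\iota(A_\bullet)}$, i.e.\ a simplicial space mapping to $\iota(A_\bullet)$). So the only thing to check is that the two corollaries really do refer to the same model category $\int_{A_\bullet\in(\sS_0)_{\segal}}(\sS_{/\iota(A_\bullet)})_{\act}$ --- which they do, it being the integral model structure of Corollary~\ref{c:equivalence-2} --- and likewise that the intermediate term $\int_{B\in\SS_0}\SS_{/\iota(B)}$ is literally the same model category in both corollaries, which again is immediate. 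There is no genuine obstacle; the content of the corollary is entirely carried by Corollaries~\ref{c:equivalence-1} and~\ref{c:equivalence-2}, and the proof amounts to the sentence ``compose the two Quillen equivalences and use that Quillen equivalences compose.''
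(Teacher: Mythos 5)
Your proposal is correct and matches the paper's own treatment: the paper records Corollary~\ref{c:equivalence-3} precisely as the composite of the Quillen equivalences of Corollaries~\ref{c:equivalence-1} and~\ref{c:equivalence-2}, using the standard fact that Quillen equivalences compose (and the middle integral model category is literally the same in both). Your remark that the $X_\bullet$ in the displayed source category is just a typo for $A_\bullet$ is also right.
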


\section{Truncation theory for integral model categories}

Recall that a space $X \in \SS$ is called \textbf{$n$-truncated} if it has no homotopy groups above dimension $n$. It is a classical fact that $X$ is $n$-truncated if and only if $\Map^{\der}_\SS(Y,X)$ is $n$-truncated for every space $Y \in \SS$. The concept of $n$-truncation can hence be generalized as follows (see also~\cite[\S 5.5.6]{Lur09} for the notion of $n$-truncation in the $\infty$-categorical setting).
\begin{define}\label{d:truncated}
Let $\M$ be a model category. We will say that an object $X \in \M$ is \textbf{$n$-truncated} if for every object $Y \in \M$ the derived mapping space $\Map^{\der}_\M(Y,X)$ is an $n$-truncated space. We will say that a map $f: X \lrar X_n$ exhibits $X_n$ as an \textbf{$n$-truncation} of $X$ if for every $n$-truncated object $Z$ the induced map
$$ \Map^{\der}_\M(X_n,Z) \lrar \Map^{\der}_\M(X,Z) $$
is a weak equivalence.
\end{define}

\begin{example}\label{e:slice}
Let $A \in \SS$ be a space and consider the slice model category $\SS_{/A}$. Then an object $f:X \lrar A$ in $\SS_{/A}$ is $n$-truncated if and only if the homotopy fiber of $f$ is $n$-truncated over every point $a \in A$.
\end{example}

In this section we will give a description of $n$-truncated objects and $n$-truncation maps in a general integral model structure $\int_\M \F$. We start with the following proposition which describes the behaviour of derived mapping spaces in the integral model structure. It is well-known that derived mapping spaces in a model category depend only on the underlying relative category. Since the underlying $\infty$-category of the integral model structure coincides with the $\infty$-categorical Grothendieck construction of the underlying $\infty$-functor, it will be convenient to prove the following proposition using the machinery of $\infty$-categories. 

Let $\Set^+_\Del$ denote the category of marked simplicial sets. The category $\Set^+_\Del$ can be endowed with the coCartesian model structure (see~\cite[Remark $3.1.3.9$]{Lur09}) yielding a model for the theory of $\infty$-categories. Given a marked simplicial set $(\C,\V)$ we will denote by $\L(\C,\V)$ the fibrant replacement of $(\C,\V)$ in $\Set^+_\Del$. We will consider $\L(\C,\V)$ as a model for the $\infty$-localization of $\C$ obtained by formally inverting the arrows of $\V$. 

Now let $\M$ be a model category. We will denote
$$ \M_\infty \x{\df}{=} \L\left(\N\left(\M^{\cof}\right),\N\left(\W\cap \M^{\cof}\right)\right) .$$ 
Here, $\M^{\cof} \subseteq \M$ denotes the full subcategory of cofibrant objects. Following Lurie (see~\cite[Definition $1.3.4.15$]{Lur14}), we will refer to $\M_\infty$ as the \textbf{underlying $\infty$-category of $\M$}. In the case of $\M = \Set^+_\Del$, we will also denote by $\Cat_\infty \x{\df}{=} \left(\Set^+_\Del\right)_\infty$ the underlying $\infty$-category of $\infty$-categories. 

\begin{pro}\label{p:ses}
Let $\M$ be a model category and $\F: \M \lrar \ModCat$ a proper relative functor. Consider the integral model structure on $\int_{\M}\F$ (see Theorem~\ref{model structure}). Let $(A,X),(B,Y) \in \int_{\M}\F$ and consider a map $f: A \lrar B$. Then the sequence
$$ \Map^{\der}_{\F(B)}(f_!X,Y) \lrar \Map^{\der}((A,X),(B,Y)) \lrar \Map^{\der}_{\M}(A,B) $$
is a homotopy fibration sequence (with respect to the base point $f \in \Map^{\der}_{\M}(A,B)$).
\end{pro}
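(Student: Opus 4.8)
The plan is to identify the statement as an instance of the fundamental fibration sequence associated to a coCartesian fibration of $\infty$-categories, using the fact that the underlying $\infty$-category of the integral model structure is the $\infty$-categorical Grothendieck construction. First I would fix a fibrant-cofibrant model: by Lemma~\ref{characterization} and the construction of the integral model structure, I may assume $(A,X)$ is cofibrant (so $A$ is cofibrant in $\M$ and $X$ is cofibrant in $\F(A)$) and $(B,Y)$ is fibrant (so $B$ is fibrant in $\M$ and $Y$ is fibrant in $\F(B)$), replacing the objects functorially if needed; this does not change any of the three derived mapping spaces. Then the derived mapping space $\Map^{\der}((A,X),(B,Y))$ is computed as an honest simplicial mapping space in $\int_\M\F$, whose $0$-simplices are pairs $(f,\phi)$ with $f\colon A\to B$ in $\M$ and $\phi\colon f_!X\to Y$ in $\F(B)$.

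Next I would invoke the comparison between the underlying $\infty$-category $\left(\int_\M\F\right)_\infty$ and the $\infty$-categorical Grothendieck construction $\int_{\M_\infty}\F_\infty$ of the underlying $\infty$-functor $\F_\infty\colon\M_\infty\to\Cat_\infty$ (this is the content recalled in the preamble to the proposition, that the integral model structure models the $\infty$-categorical Grothendieck construction; one can cite \cite{HP} together with the general straightening/unstraightening machinery of \cite{Lur09}). Under this identification, the projection $\int_\M\F\to\M$ induces, on underlying $\infty$-categories, the coCartesian fibration $q\colon\int_{\M_\infty}\F_\infty\to\M_\infty$ classified by $\F_\infty$. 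I would then apply the standard fact (\cite[\S 2.4.4, and the discussion of mapping spaces in coCartesian fibrations]{Lur09}) that for a coCartesian fibration $q\colon\E\to\B$ with fibers $\E_b$, objects $e\in\E_A$, $e'\in\E_B$, and a chosen morphism $f\colon A\to B$ in $\B$, the square
$$
\xymatrix{
\Map_{\E_B}(f_!e,e') \ar[r]\ar[d] & \Map_\E(e,e') \ar[d] \\
* \ar[r]_-{f} & \Map_\B(A,B) \\
}
$$
is a homotopy pullback, where $f_!e\in\E_B$ is the coCartesian pushforward; equivalently, the homotopy fiber of $\Map_\E(e,e')\to\Map_\B(A,B)$ over $f$ is $\Map_{\E_B}(f_!e,e')$. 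Applying this with $\E=\int_{\M_\infty}\F_\infty$, $\B=\M_\infty$, $e=X$, $e'=Y$ gives precisely the asserted fibration sequence, once one checks that the coCartesian pushforward of $X$ along $f$ in the $\infty$-categorical fiber is modeled by $f_!X^{\cof}\simeq f_!X$ in $\F(B)_\infty$ — which holds because $f_!\dashv f^*$ is a Quillen pair and $X$ is cofibrant, so the left-derived functor is computed on the nose.

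The main obstacle, and the step requiring the most care, is the precise matching of the model-categorical mapping space $\Map^{\der}_{\int_\M\F}((A,X),(B,Y))$ with the $\infty$-categorical $\Map_{\int_{\M_\infty}\F_\infty}(X,Y)$ together with the compatibility of the projection-to-$\M$ maps: one must ensure that under the equivalence of underlying $\infty$-categories, the map to $\Map^{\der}_\M(A,B)$ induced by the functor $\int_\M\F\to\M$ agrees (up to coherent homotopy) with the map $q$ coming from the coCartesian fibration structure. This is essentially a bookkeeping point about the naturality of the straightening equivalence with respect to the structural projection, and I expect it to follow from the functoriality statements in \cite{HP} identifying the integral model structure with the Grothendieck construction, but it is the place where one genuinely needs the $\infty$-categorical input rather than a direct model-categorical computation. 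An alternative, more hands-on route — building the fibration sequence directly from the simplicial mapping spaces of $\int_\M\F$ by exhibiting $\Map^{\der}_{\F(B)}(f_!X,Y)$ as the strict fiber of a fibration of simplicial sets $\Map((A,X),(B,Y))\to\Map_\M(A,B)$ — is possible but runs into the difficulty that the relevant map of simplicial mapping spaces need not be a fibration on the nose, so one would have to do a careful cofibrant/fibrant replacement argument; the $\infty$-categorical argument sketched above cleanly sidesteps this.
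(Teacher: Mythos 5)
Your argument is correct and follows essentially the same route as the paper: pass to underlying $\infty$-categories, use the identification of $\left(\int_{\M}\F\right)_{\infty}$ with the coCartesian fibration $\int_{\M_\infty}\F_\infty \to \M_\infty$ (this is \cite[Proposition 3.10]{HP}, stated as an equivalence over $\M_\infty$, which settles the compatibility point you flag), and then apply the mapping-space statement from \cite[\S 2.4.4]{Lur09} — the paper does this by applying the dual of Proposition 2.4.4.3 to the coCartesian edge $(f,\Id):(A,X)\to(B,f_!X)$ and taking fibers over $\Id$ and $f$, which is exactly the homotopy pullback square you invoke.
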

\begin{proof}
According to~\cite[Proposition 3.10]{HP}, there exists an equivalence of $\infty$-categories over $\M_\infty$
\begin{equation}\label{e:hinich}
\xymatrix{
\l(\int_{\M}\F\r)_{\infty} \ar_{\pi}[dr]\ar^{\simeq}[rr] && \int_{\M_\infty}\F_\infty \ar^{p}[dl] \\
& \M_\infty & \\
}
\end{equation}
where $\int_{\M_\infty}\F_\infty \lrar \M_\infty$ is the coCartesian fibration classifying the functor $\F_\infty:\M_\infty \lrar \Cat_\infty$ underlying $\F$. Applying the dual statement of~\cite[Proposition 2.4.4.3]{Lur09} to the $p$-coCartesian morphism $(f,\Id):(A,X) \lrar (B,f_!X)$ we may deduce that the square
$$ \xymatrix{
\Map^h((B,f_!X),(B,Y)) \ar[r]\ar^{q}[d] & \Map^h((A,X),(B,Y)) \ar[d] \\
\Map^h_{\M}(B,B) \ar[r] & \Map_{\M}(A,B) \\
}$$
is homotopy Cartesian. In light of~\ref{e:hinich} we can identify the homotopy fiber of $q$ over $\Id \in \Map^{\der}_{\M}(B,B)$ with the derived mapping space $\Map^{\der}_{\F(B)}(f_!X,Y)$. We hence obtain a homotopy Cartesian square
$$ \xymatrix{
\Map^{\der}_{\F(B)}(f_!X,Y) \ar[r]\ar[d] & \Map^{\der}((A,X),(B,Y)) \ar[d] \\
\{\Id\} \ar^{f}[r] & \Map^{\der}_{\M}(A,B) \\
}$$
The desired result now follows.
\end{proof}

We shall now describe the behaviour of $n$-truncation in the integral model structure.
\begin{pro}\label{p:trunct-int}
Let $\M$ be a model category and $\F: \M \lrar \ModCat$ a proper relative functor. Consider the integral model structure on $\int_\M\F$. Then
\begin{enumerate}
\item
An object $(A,X) \in \int_\M\F$ is $n$-truncated if and only if $A$ is $n$-truncated in $\M$ and $X$ is $n$-truncated in $\F(A)$.
\item
Let $(f,\vphi):(A,X) \lrar (B,Y)$ be a map in $\int_\M\F$ such that $(B,Y)$ is $n$-truncated. Assume that $f: A \lrar B$ is an $n$-truncation in $\M$ and that $\vphi: f_!X \lrar Y$ is an $n$-truncation in $\F(B)$. Then $(f,\vphi)$ is an $n$-truncation in $\int_M\F$.
\end{enumerate}
\end{pro}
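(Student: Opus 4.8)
The plan is to deduce both parts from Proposition~\ref{p:ses}, which presents each derived mapping space $\Map^{\der}((A,X),(B,Y))$ as the total space of a homotopy fibration over $\Map^{\der}_\M(A,B)$ whose fibre over $f$ is $\Map^{\der}_{\F(B)}(f_!X,Y)$. Two elementary facts about $n$-truncated spaces then do the work: in a homotopy fibration sequence $F\to E\to B$, if $B$ and every fibre $F$ are $n$-truncated then so is $E$, and if $E$ and $B$ are $n$-truncated then so is $F$ (both from the long exact sequence of homotopy groups); and a commutative square of homotopy fibrations which is a weak equivalence on bases and on every fibre is a weak equivalence on total spaces.

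For part (i), suppose first that $(A,X)$ is $n$-truncated. To see that $A$ is $n$-truncated, I would feed into Proposition~\ref{p:ses} the objects $(C,\emptyset_C)$, where $\emptyset_C$ is the initial object of $\F(C)$: since left adjoints preserve initial objects, $f_!\emptyset_C=\emptyset_A$ for every $f\colon C\to A$, and $\Map^{\der}_{\F(A)}(\emptyset_A,X)\simeq\ast$, so the projection $\Map^{\der}((C,\emptyset_C),(A,X))\to\Map^{\der}_\M(C,A)$ has contractible homotopy fibres and is hence an equivalence; as its source is $n$-truncated for every $C$, so is $\Map^{\der}_\M(C,A)$. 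To see that $X$ is $n$-truncated in $\F(A)$, apply Proposition~\ref{p:ses} to $\Id_A$ and $(A,Z),(A,X)$: this identifies $\Map^{\der}_{\F(A)}(Z,X)$ with the homotopy fibre over $\Id_A$ of $\Map^{\der}((A,Z),(A,X))\to\Map^{\der}_\M(A,A)$; the total space is $n$-truncated by hypothesis and the base is now known to be $n$-truncated, so the fibre is $n$-truncated by the first elementary fact. Conversely, if $A$ and $X$ are $n$-truncated then $\Map^{\der}((C,Z),(A,X))$ fibres over the $n$-truncated space $\Map^{\der}_\M(C,A)$ with fibres $\Map^{\der}_{\F(A)}(f_!Z,X)$, each $n$-truncated since $X$ is; apply the first elementary fact again.

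For part (ii), let $(D,W)$ be $n$-truncated, so by part (i) both $D$ and $W$ are $n$-truncated. Precomposition with $(f,\vphi)$ yields a map from the fibration $\Map^{\der}((B,Y),(D,W))\to\Map^{\der}_\M(B,D)$ to the fibration $\Map^{\der}((A,X),(D,W))\to\Map^{\der}_\M(A,D)$, lying over the map $(-)\circ f$ on bases and inducing on the fibre over $g\colon B\to D$ the map $\Map^{\der}_{\F(D)}(g_!Y,W)\to\Map^{\der}_{\F(D)}(g_!f_!X,W)$ given by precomposition with $g_!(\vphi)$ — here one uses $(g\circ f)_!\cong g_!f_!$ together with the composition law of the Grothendieck construction. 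The map on bases is an equivalence because $f$ is an $n$-truncation in $\M$ and $D$ is $n$-truncated. For the map on fibres, derived adjunction $g_!\dashv g^{*}$ rewrites it as precomposition with $\vphi$ inside $\Map^{\der}_{\F(B)}(-,\mathbb{R}g^{*}W)$, and $\mathbb{R}g^{*}W$ is $n$-truncated in $\F(B)$ since $\Map^{\der}_{\F(B)}(V,\mathbb{R}g^{*}W)\simeq\Map^{\der}_{\F(D)}(\mathbb{L}g_!V,W)$ is $n$-truncated for every $V$; as $\vphi$ is an $n$-truncation in $\F(B)$, this map of fibres is an equivalence. By the second elementary fact the top map is an equivalence for every $n$-truncated $(D,W)$, which is exactly the assertion that $(f,\vphi)$ is an $n$-truncation in $\int_\M\F$.

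I expect the principal obstacle to be the coherence bookkeeping in part (ii): checking that the comparison of the two fibration sequences of Proposition~\ref{p:ses} is genuinely induced on fibres by precomposition with $g_!(\vphi)$, compatibly with all the identifications $(g\circ f)_!\cong g_!f_!$, and that the fibration of Proposition~\ref{p:ses} is suitably natural in the first variable. The clean way to handle this is to argue inside the underlying $\infty$-category $\left(\int_\M\F\right)_\infty\simeq\int_{\M_\infty}\F_\infty$, exactly as in the proof of Proposition~\ref{p:ses}, where these naturality statements come from functoriality of the $\infty$-categorical Grothendieck construction and the mapping-space fibration attached to a coCartesian fibration.
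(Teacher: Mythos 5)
Your proposal is correct and takes essentially the same route as the paper: both parts are deduced from the fibration sequences of Proposition~\ref{p:ses}, with the fibres rewritten via the derived adjunction $g_!\dashv g^{*}$ and the standard facts about truncated spaces in fibration sequences, exactly as in the paper's argument for part (2). The only cosmetic difference is in part (1), where you prove that $A$ is $n$-truncated by mapping in objects $(C,\emptyset_C)$ with initial fibre, whereas the paper simply observes that the projection $\int_\M\F\lrar\M$ is a right Quillen functor and hence preserves $n$-truncated objects.
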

\begin{proof}\
\begin{enumerate}

\item
First assume that $(A,X)$ is $n$-truncated. Since $\pi: \int_\M\F \lrar \M$ is in particular a right Quillen functor (see~\cite[Corollary 5.8]{HP}), the image $A = \pi(A,X)$ is $n$-truncated in $\M$. To see that $X$ is $n$-truncated in $\F(A)$ consider another object $X' \in \F(A)$. According to Proposition~\ref{p:ses} we have a homotopy fibration sequence
$$ \Map^{\der}_{\F(A)}(X',X) \lrar \Map^{\der}((A,X'),(A,X)) \lrar \Map^{\der}_{\M}(A,A) $$
with respect to the base point $\Id \in \Map^{\der}_{\M}(A,A)$. Since $(A,X)$ is $n$-truncated and $A$ is $n$-truncated in $\M$ we conclude that $\Map^{\der}_{\F(A)}(X',X)$ is an $n$-truncated space for every $X' \in \F(A)$. This means that $X$ is $n$-truncated in $\F(A)$.

Now assume that $A$ is $n$-truncated in $\M$ and $X$ is $n$-truncated in $\F(A)$. Let $(A',X')$ be an object in $\int_\M\F$. For each $f: A' \lrar A$, consider the homotopy fibration sequence
$$ \Map^{\der}_{\F(A)}(f_!X',X) \lrar \Map^{\der}((A',X'),(A,X)) \lrar \Map^{\der}_{\M}(A',A) $$
given by Proposition~\ref{p:ses}. By our assumptions both $\Map^{\der}_{\M}(A',A)$ and \\ $\Map^{\der}_{\F(A)}(f_!X',X)$ are $n$-truncated and so $\Map^{\der}((A',X'),(A,X))$ is $n$-truncated as well. This shows that $(A,X)$ is an $n$-truncated object of $\int_\M\F$.

\item
Let $(C,Z)$ be an $n$-truncated object in $\int_\M\F$. According to $(1)$ we see that $C$ is $n$-truncated in $\M$ and $Z$ is $n$-truncated in $\F(C)$. Now for every map $g: B \lrar C$ we obtain a homotopy commutative diagram of the form

$$
\xymatrix@=15pt{
\Map^{\der}_{\F(C)}(Y,g^*Z) \ar[r]^{\simeq}\ar[d]_{\simeq}^{(-)\circ \vphi} & \Map^{\der}_{\F(C)}(g_!Y,Z) \ar[r]\ar^{(-)\circ (g_!\vphi)}[d] & \Map^{\der}((B,Y),(C,Z)) \ar[r]\ar^{(-)\circ (f,\vphi)}[d] & \Map^{\der}_{\M}(B,C)\ar^{(-)\circ f}_{\simeq}[d] \\
\Map^{\der}_{\F(C)}(f_!X,g^*Z) \ar[r]^{\simeq} & \Map^{\der}_{\F(C)}(g_!f_!X,Z) \ar[r] & \Map^{\der}((A,X),(C,Z)) \ar[r] & \Map^{\der}_{\M}(A,C) \\
}$$
where the left-most horizontal arrows are the equivalences induced by the Quillen adjunction $g_! \dashv g^*$, and the middle and right-most squares comprise the map of homotopy fibration sequences given by Proposition~\ref{p:ses}. Now the right vertical map is an equivalence because $f$ is an $n$-truncation, the left vertical map is an equivalence because $\vphi$ is an $n$-truncation. It hence follows that the induced map
$$ (-)\circ (f,\vphi): \Map^{\der}((B,Y),(C,Z)) \lrar \Map^{\der}((A,X),(C,Z)) $$
is an equivalence as desired.
\end{enumerate}
\end{proof}

\section{Truncation in the integral model category of Segal group actions}\label{ss:tr-seg}

In this section we will describe $n$-truncation functors in the model categories of Segal groups and Segal group actions. Our main goal is to prove the following (see Theorem~\ref{t:act-truncation} and Theorem~\ref{t:converge} below):
\begin{itemize}
\item
The natural map of Segal group actions $$ \xymatrix{
X_\bullet \ar[r]\ar[d] & \P_n(X_\bullet) \ar[d] \\
A_\bullet \ar[r] & \P_n(A_\bullet) \\
}$$ is an $n$-truncation map, where $\P_n$ is the Postnikov piece functor of spaces applied level-wise.
\item
The resulting \textbf{Postnikov tower} of Segal group actions $\{\P_n(X_\bullet) \lrar \P_n(A_\bullet)\}$ converges to $X_\bullet \lrar A_\bullet$.
\end{itemize}

Let us begin by examining truncation in the slice category of spaces. 

\begin{notn}
Let $f:X \lrar A$ be a map in $\SS$. We will denote by 
$$ X \x{\simeq}{\lrar} \ovl{X} \x{f^{\fib}}{\lrar} A $$
the functorial factorization of $f$ into a trivial cofibration followed by a fibration.
\end{notn}

We now recall the following construction, taken from \cite[2.4]{DK}.
\begin{define}
Let $f: X \lrar A$ be a map in $\SS$. For $n\geq 1$ we will denote by
$\cosk_n(f)$
the simplicial set whose $k$-simplices are the set of commutative squares of the form
$$ \xymatrix{
\sk_n(\Del^k) \ar[r]\ar[d] & X \ar[d] \\
\Del^k \ar[r] & A. \\
}$$
We have canonical maps 
$$ \xymatrix{
X \ar[rr]\ar_{f}[dr] && \cosk_n(f) \ar^{f_n}[dl] \\
& A & \\
}$$
We then define the \textbf{relative Postnikov $n$-piece of $X$ over $A$} to be
$$ \P_n(X/A) \x{\df}{=} \cosk_{n+1}\left(f^{\fib}\right) $$
\end{define}

\begin{rem}\label{r:relative}
It is straightforward to verify that if $f: X \lrar A$ is a fibration then $f_n:\cosk_n(f) \lrar A$ is a fibration as well. Note that for $A = *$ the above construction reproduces one of the standard models for the Postnikov $n$-piece of $X$, namely, the coskeleton of its Kan replacement. For general $A$ we see that $\P_n(X/A)$ is $n$-truncated as an object in $\SS_{/A}$ and the map $X \lrar \P_n(X/A)$ is an $n$-truncation in $\SS_{/A}$. Furthermore, for any $f: X \lrar A$ there exists an equivalence 
$$ \xymatrix{
\P_n(X) \ar^{\simeq}[rr]\ar[dr] && \P_n(X/\P_n(A)) \ar[dl] \\
& \P_n(A) & \\
}$$
in $\SS_{/\P_n(A)}$.
\end{rem}


Let $\I$ be a small category and consider the functor category $\SS^{\I}$ equipped with either the injective or the projective model structure. Then an object $X \in \SS^{\I}$ is $n$-truncated if and only if $X(i)$ is $n$-truncated for every $i \in \I$. This follows from the fact that every object in $\SS^{\I}$ is a homotopy colimit of representables $R_i \in \SS^{\I}$ (see~\cite[Proposition 2.9]{Dug}) in conjunction with
$$ \Map^{\der}_{\SS^{\I}}(R_i,X) \simeq X(i) $$
Similarly, if $A \in \SS^{\I}$ is any object then we can consider the slice model category $\SS^{\I}_{/A}$. As in example~\ref{e:slice} we see that an object $\vphi: X \lrar A$ in $\SS^{\I}_{/A}$ is $n$-truncated if the homotopy fiber of $\vphi(i):X(i) \lrar A(i)$ over every point of $A(i)$ is $n$-truncated (equivalently, if $X(i) \lrar A(i)$ is $n$-truncated in $\SS_{/A(i)}$ for every $i \in \I$).

The case of $\SS^{\I}_{/A}$ considered above is in fact extremely well-behaved. Given an object $X \lrar A$ in $\SS^{\I}_{/A}$ one can define $\P_n(X/A)$ by setting
$$ \P_n(X/A)(i) = \P_n(X(i)/A(i)) .$$
Using the same argument as in~\cite{Bie} one can verify that the functor $Q = \P_n(-/A)$ satisfies Bousfield's axioms $\mathbf{A}.4, \mathbf{A}.5$ and $\mathbf{A}.6$ (see~\cite[Definition 2.2]{Bie}) and hence one can left-Bousfield localize the slice injective model structure on $\SS^{\I}_{/A}$ so that the new weak equivalences are the maps 
$$ \xymatrix{ 
X \ar[rr]\ar[dr] && Y \ar[dl] \\
& A & \\
}$$
such that
$$ \xymatrix{ 
\P_n(X/A) \ar[rr]\ar[dr] && \P_n(Y/A) \ar[dl] \\
& A & \\
}$$
is a weak equivalence in $\SS^{\I}_{/A}$. Furthermore, the new fibrant objects are precisely the old fibrant objects which are furthermore $n$-truncated. This means in particular that for any $X \in \SS^{\I}_{/A}$ the natural map
$$ X \lrar \P_n(X/A) $$
is an $n$-truncation in the sense of Definition~\ref{d:truncated}.

Our next goal in this subsection is to construct truncation maps for the model categories $\Seg$ of Segal groups and the model category 
$$ \displaystyle\mathop{\int}_{A_\bullet \in \Seg} \left(\sS_{/\iota(A_\bullet)}\right)_{\Act} $$ 
of all Segal groups actions. We begin with a general lemma which allows one to study $n$-truncation in a model category $\M$ via a suitable Quillen adjunction to another model category $\N$.
\begin{lem}\label{l:general}
Let 
$$ \xymatrix@=13pt{
\M \ar[rr]<1ex>^(0.5){\L} && \N \ar[ll]<1ex>_(0.5){\upvdash}^(0.5){\R}
}$$
be a Quillen adjunction such that the derived counit map
$$ \L((\R(X))^{\cof}) \lrar X $$ 
is a weak equivalence for every fibrant $X \in \N$. Then
\begin{enumerate}
\item
An object $X \in \N$ is $n$-truncated if and only if $R\left(X^{\fib}\right)$ is $n$-truncated in $\M$.
\item
Let $f: X \lrar Y$ be a map in $\N$ where $Y$ is $n$-truncated. Assume that the induced map $\R\left(X^{\fib}\right) \lrar \R\left(Y^{\fib}\right)$ is an $n$-truncation. Then $f$ is an $n$-truncation.
\end{enumerate}
\end{lem}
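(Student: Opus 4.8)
The plan is to derive both statements from the standard characterization of $n$-truncated objects and $n$-truncation maps in terms of derived mapping spaces (Definition~\ref{d:truncated}), using the fact that a Quillen adjunction induces a derived adjunction on homotopy function complexes. The key input is that under the stated hypothesis on the derived counit, the derived left adjunction $\mathbb{L}\L$ is homotopically fully faithful, or more precisely that the derived unit and counit behave well enough to transfer truncation data back and forth.

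For part $(1)$: first I would record the derived adjunction isomorphism
$$ \Map^{\der}_{\N}(\L(W^{\cof}),X) \simeq \Map^{\der}_{\M}(W,\R(X^{\fib})) $$
natural in $W \in \M$ and $X \in \N$, which holds for any Quillen pair. Now suppose $\R(X^{\fib})$ is $n$-truncated in $\M$. To show $X$ is $n$-truncated in $\N$, take an arbitrary object $Z \in \N$; we must show $\Map^{\der}_{\N}(Z,X)$ is $n$-truncated. Using the hypothesis that the derived counit $\L((\R(Z^{\fib}))^{\cof}) \lrar Z^{\fib}$ is a weak equivalence (for fibrant objects — here applied to $Z^{\fib}$), we get
$$ \Map^{\der}_{\N}(Z,X) \simeq \Map^{\der}_{\N}(\L((\R(Z^{\fib}))^{\cof}),X) \simeq \Map^{\der}_{\M}(\R(Z^{\fib}),\R(X^{\fib})), $$
and the right-hand side is $n$-truncated since $\R(X^{\fib})$ is $n$-truncated in $\M$. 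Conversely, if $X$ is $n$-truncated in $\N$, then for any $W \in \M$ the adjunction isomorphism identifies $\Map^{\der}_{\M}(W,\R(X^{\fib}))$ with $\Map^{\der}_{\N}(\L(W^{\cof}),X)$, which is $n$-truncated because $X$ is; hence $\R(X^{\fib})$ is $n$-truncated in $\M$.

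For part $(2)$: assume $Y$ is $n$-truncated and $\R(X^{\fib}) \lrar \R(Y^{\fib})$ is an $n$-truncation in $\M$. Let $Z \in \N$ be $n$-truncated; I must show that precomposition with $f$ induces an equivalence $\Map^{\der}_{\N}(Y,Z) \lrar \Map^{\der}_{\N}(X,Z)$. By part $(1)$, $\R(Z^{\fib})$ is $n$-truncated in $\M$. Now apply the chain of equivalences above (replacing $Z$ by $Y$ and by $X$ respectively) to rewrite both sides: $\Map^{\der}_{\N}(Y,Z) \simeq \Map^{\der}_{\M}(\R(Y^{\fib}),\R(Z^{\fib}))$ and $\Map^{\der}_{\N}(X,Z) \simeq \Map^{\der}_{\M}(\R(X^{\fib}),\R(Z^{\fib}))$, compatibly with the maps in question. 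Since $\R(X^{\fib}) \lrar \R(Y^{\fib})$ is an $n$-truncation and $\R(Z^{\fib})$ is $n$-truncated, the induced map on $\Map^{\der}_{\M}(-,\R(Z^{\fib}))$ is an equivalence, which gives the claim.

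The main obstacle, and the point requiring the most care, is the bookkeeping of naturality in the chain of equivalences: one needs the identifications $\Map^{\der}_{\N}(Z,X) \simeq \Map^{\der}_{\M}(\R(Z^{\fib}),\R(X^{\fib}))$ to be natural in $Z$ (and compatible with the maps induced by $f$) so that the truncation condition genuinely transfers, rather than just an abstract equivalence of spaces. This is handled by noting that the derived-counit weak equivalence is natural in $Z$ on fibrant objects, so that precomposition by $f$ on the left corresponds to precomposition by $\R(f^{\fib})$ on the right under the adjunction; the remaining parts are formal consequences of the Quillen adjunction together with Definition~\ref{d:truncated}.
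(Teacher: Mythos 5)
Your proposal is correct and follows essentially the same route as the paper: both directions of $(1)$ and the claim in $(2)$ are deduced from the derived adjunction isomorphism together with the observation that the counit hypothesis makes $\R\left((-)^{\fib}\right)$ induce equivalences $\Map^{\der}_\N(Z,X)\simeq \Map^{\der}_\M\left(\R\left(Z^{\fib}\right),\R\left(X^{\fib}\right)\right)$ on derived mapping spaces. The only difference is presentational: you spell out the chain of equivalences and the naturality bookkeeping that the paper leaves implicit.
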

\begin{proof}
Let us begin with $(1)$. First by adjunction it follows that the derived functor $\R((-)^{\fib})$ sends $n$-truncated objects to $n$-truncated objects. On the other hand, under the assumptions of the lemma the derived functor $\R((-)^{\fib})$ induces an equivalence on derived mapping spaces
$$ \Map^{\der}_\N(Y,X) \x{\simeq}{\lrar} \Map^{\der}_\M(\R\left(Y^{\fib}\right), \R\left(X^{\fib}\right)) $$
for every $Y,X \in \N$. It hence follows that if $\R\left(X^{\fib}\right)$ is $n$-truncated then $X$ is $n$-truncated. 

We now proceed to prove $(2)$. Let $Z \in \N$ be an $n$-truncated object. By the above we get that $\R(Z^{\fib})$ is $n$-truncated as well. Furthermore we know that
$$ \Map^{\der}(Y,Z) \simeq \Map^{\der}\left(\R\left(Y^{\fib}\right),\R\left(Z^{\fib}\right)\right) $$
and
$$ \Map^{\der}(X,Z) \simeq \Map^{\der}\left(\R\left(X^{\fib}\right),\R\left(Z^{\fib}\right)\right) $$
Since we assumed that the map $\R\left(X^{\fib}\right) \lrar \R\left(Y^{\fib}\right)$ is an $n$-truncation it follows that $f: X \lrar Y$ is an $n$-truncation as well.
\end{proof}

\begin{cor}\label{c:before}
Let $X_\bullet$ be a reduced simplicial space. Then 
\begin{enumerate}
\item
$X_\bullet$ is $n$-truncated in $\sS_0$ if and only if $\iota(X_\bullet)$ is $n$-truncated in $\\sS$, i.e., if and only if $X_k$ is an $n$-truncated space for every $k$.
\item
The natural map $X_\bullet \lrar \P_n(X_\bullet)$ is an $n$-truncation in $\sS_0$.
\end{enumerate}
\end{cor}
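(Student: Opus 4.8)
The plan is to deduce both parts from Lemma~\ref{l:general}, applied to the Quillen adjunction $(-)^{\red}\dashv\iota$ between the Reedy model structure $\sS$ and the reduced Reedy model structure $\sS_0$. This is a Quillen pair: the composite $\iota\circ(-)^{\red}$ preserves Reedy cofibrations and trivial Reedy cofibrations (as shown in the proof establishing the reduced Reedy model structure), and since cofibrations and weak equivalences in $\sS_0$ are created by $\iota$, the functor $(-)^{\red}$ preserves cofibrations and trivial cofibrations, hence is left Quillen. First I would verify the hypothesis of Lemma~\ref{l:general}: since every object of $\sS$ is cofibrant, for a fibrant $X_\bullet\in\sS_0$ a cofibrant replacement $(\iota X_\bullet)^{\cof}\lrar\iota X_\bullet$ is a weak equivalence between cofibrant objects, so by Ken Brown's lemma its image $(-)^{\red}\big((\iota X_\bullet)^{\cof}\big)\lrar(-)^{\red}(\iota X_\bullet)$ under the left Quillen functor $(-)^{\red}$ is a weak equivalence; composing with the natural isomorphism $(-)^{\red}(\iota X_\bullet)\cong X_\bullet$ recorded in the proof of the reduced Reedy theorem shows that the derived counit is a weak equivalence.

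Granting this, part $(1)$ follows quickly. Lemma~\ref{l:general}$(1)$ gives that $X_\bullet$ is $n$-truncated in $\sS_0$ if and only if $\iota(X_\bullet^{\fib})$ is $n$-truncated in $\sS$. Since $X_\bullet\lrar X_\bullet^{\fib}$ is a weak equivalence in $\sS_0$, applying $\iota$ gives a weak equivalence in $\sS$, and $n$-truncatedness is invariant under weak equivalence, so $\iota(X_\bullet^{\fib})$ is $n$-truncated precisely when $\iota(X_\bullet)$ is. Finally $\sS=\SS^{\Del^{\op}}$ carries the injective model structure, so by the characterization of truncated objects in injective diagram categories recalled above, $\iota(X_\bullet)$ is $n$-truncated if and only if $X_k$ is an $n$-truncated space for every $k$.

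For part $(2)$ I would argue as follows. Since $X_0=*$, the levelwise Postnikov truncation $\P_n(X_\bullet)$ is again reduced, and by part $(1)$ it is $n$-truncated in $\sS_0$. I would then invoke Lemma~\ref{l:general}$(2)$ with $Y_\bullet=\P_n(X_\bullet)$; the remaining point is to check that the induced map $\iota(X_\bullet^{\fib})\lrar\iota\big(\P_n(X_\bullet)^{\fib}\big)$ is an $n$-truncation in $\sS$. This map sits in a square with vertical weak equivalences over the levelwise natural map $\iota(X_\bullet)\lrar\P_n(\iota(X_\bullet))$: the left leg is the fibrant replacement $\iota(X_\bullet)\lrar\iota(X_\bullet^{\fib})$ and the right leg is $\P_n(\iota(X_\bullet))=\iota(\P_n(X_\bullet))\lrar\iota\big(\P_n(X_\bullet)^{\fib}\big)$, both weak equivalences in $\sS$. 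Now $\iota(X_\bullet)\lrar\P_n(\iota(X_\bullet))$ is an $n$-truncation in $\SS^{\Del^{\op}}$ — this is the case $A=*$ of the fact, recalled above, that $X\lrar\P_n(X/A)$ is an $n$-truncation in $\SS^{\I}_{/A}$ (see also Remark~\ref{r:relative}), and it makes no difference which functorial model of the $n$-truncation of spaces is used, since any two are naturally weakly equivalent. Because being an $n$-truncation depends only on the induced maps of derived mapping spaces into $n$-truncated objects, it is preserved under replacing a map by a weakly equivalent one; hence $\iota(X_\bullet^{\fib})\lrar\iota\big(\P_n(X_\bullet)^{\fib}\big)$ is an $n$-truncation in $\sS$, and Lemma~\ref{l:general}$(2)$ then yields that $X_\bullet\lrar\P_n(X_\bullet)$ is an $n$-truncation in $\sS_0$. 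I do not expect a serious obstacle: the only slightly delicate point is the bookkeeping of fibrant replacements in this last step — making sure that $\iota$ applied to the fibrant replacement of $X_\bullet\lrar\P_n(X_\bullet)$ is comparable, through weak equivalences of arrows, to the honest levelwise truncation map — and this is harmless because all the replacements in sight are weak equivalences and $n$-truncation is homotopy-invariant.
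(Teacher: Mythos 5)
Your proposal is correct and follows exactly the paper's route: the paper's entire proof is to apply Lemma~\ref{l:general} to the Quillen adjunction $(-)^{\red}\dashv\iota$ between $\sS$ and $\sS_0$, and your argument does precisely this, merely spelling out the verification of the lemma's hypotheses (every object of $\sS$ is cofibrant and the counit $(\iota X_\bullet)^{\red}\cong X_\bullet$ is an isomorphism) and the weak-equivalence-invariance bookkeeping that the paper leaves implicit.
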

\begin{proof}
Apply Lemma~\ref{l:general} to the Quillen adjunction
$$ \xymatrix@=13pt{
\sS \ar[rr]<1ex>^(0.5){(-)^{\red}} && \sS_0 \ar[ll]<1ex>_(0.5){\upvdash}^(0.5){\iota}
}$$

\end{proof}

\begin{cor}\label{c:trunct-seg}
Let $A_\bullet$ be a reduced simplicial space. Then 
\begin{enumerate}
\item
$A_\bullet$ is $n$-truncated in $\Seg$ if and only if $A^{\fib}_\bullet$ is level-wise $n$-truncated.
\item
The natural map $A_\bullet \lrar \P_n\left((A_\bullet)^{\fib}\right)$ is an $n$-truncation in $\Seg$.
\end{enumerate}
\end{cor}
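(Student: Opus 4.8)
The plan is to deduce both statements from Lemma~\ref{l:general}, applied this time to the left Bousfield localization adjunction whose left and right adjoints are both the identity functor ($\sS_0\lrar\Seg$ and $\Seg\lrar\sS_0$), in exactly the way Corollary~\ref{c:before} was obtained from the adjunction $(-)^{\red}\dashv\iota$. First I would check the hypothesis of Lemma~\ref{l:general}: since every object of $\sS_0$ is cofibrant (the cofibrations are the monomorphisms) and the right adjoint is the identity, for a fibrant object $X_\bullet\in\Seg$ — that is, a Segal group — the derived counit is simply $\Id\colon X_\bullet\lrar X_\bullet$, which is a weak equivalence. Part $(1)$ of Lemma~\ref{l:general} then tells us that $A_\bullet$ is $n$-truncated in $\Seg$ if and only if its $\Seg$-fibrant replacement $(A_\bullet)^{\fib}$, now regarded as an object of $\sS_0$, is $n$-truncated there; by Corollary~\ref{c:before}$(1)$ this holds precisely when $(A_\bullet)^{\fib}$ is level-wise $n$-truncated. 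This proves part $(1)$.

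For part $(2)$ I would apply Lemma~\ref{l:general}$(2)$ to the natural map $f\colon A_\bullet\lrar\P_n\big((A_\bullet)^{\fib}\big)$. Writing $B_\bullet := (A_\bullet)^{\fib}$, a Segal group, the lemma requires that $\P_n(B_\bullet)$ be $n$-truncated in $\Seg$ and that the induced map on $\Seg$-fibrant replacements be an $n$-truncation in $\sS_0$. The key point is that $\P_n(B_\bullet)$, although not itself Reedy fibrant in general, becomes a Segal group after Reedy fibrant replacement. Concretely, choose a reduced Reedy fibrant replacement $\P_n(B_\bullet)\x{\simeq}{\lrar}R_\bullet$ in $\sS_0$; this map is a level-wise weak equivalence, so $R_\bullet$ is level-wise $n$-truncated. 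Moreover $R_\bullet$ is Reedy fibrant by construction; it satisfies the Segal condition because $\cosk_{n+1}$ commutes with finite products and preserves weak equivalences between Kan complexes, so that the Segal maps $B_k\x{\simeq}{\lrar}B_1^{\times k}$ remain weak equivalences after applying $\P_n$ level-wise; and $\pi_0(R_1)\cong\pi_0(B_1)$ is a group. Hence $R_\bullet$ is a Segal group by Proposition~\ref{p:segal}.

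Being a Segal group, $R_\bullet$ is already $\Seg$-fibrant, and since $\P_n(B_\bullet)\lrar R_\bullet$ is a weak equivalence in $\sS_0$ and hence in $\Seg$, we may take $R_\bullet$ as a model for $\big(\P_n(B_\bullet)\big)^{\fib}$; by Remark~\ref{r:locality} any other $\Seg$-fibrant replacement is level-wise weakly equivalent to it. As $R_\bullet$ is level-wise $n$-truncated, Corollary~\ref{c:before}$(1)$ together with part $(1)$ proved above shows that $\P_n(B_\bullet)$ is $n$-truncated in $\Seg$. Finally, the induced map on $\Seg$-fibrant replacements is, up to weak equivalence, the composite $B_\bullet\lrar\P_n(B_\bullet)\x{\simeq}{\lrar}R_\bullet$, whose first arrow is an $n$-truncation in $\sS_0$ by Corollary~\ref{c:before}$(2)$ and whose second arrow is a weak equivalence onto the $n$-truncated object $R_\bullet$; it follows that this composite is an $n$-truncation in $\sS_0$. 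Lemma~\ref{l:general}$(2)$ then yields that $f\colon A_\bullet\lrar\P_n\big((A_\bullet)^{\fib}\big)$ is an $n$-truncation in $\Seg$.

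The formal skeleton of the argument is thus immediate from Lemma~\ref{l:general} and Corollary~\ref{c:before}; I expect the only real work to be the claim that $\P_n(B_\bullet)$ is a Segal group up to Reedy fibrant replacement, and within that, the inheritance of the Segal condition, which rests on the compatibility of the $\cosk_{n+1}$ model of Postnikov truncation with finite products and on its homotopy-invariance on level-wise Kan simplicial spaces. A secondary point one must keep track of is which model structure ($\sS_0$ or $\Seg$) each occurrence of ``fibrant replacement'' refers to, which is smoothed over using Remark~\ref{r:locality}.
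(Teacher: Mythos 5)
Your proposal is correct and takes essentially the same route as the paper, whose entire proof is to apply Lemma~\ref{l:general} to the identity Quillen adjunction between $\sS_0$ and $\Seg$, leaving the verifications implicit. The extra work you supply for part (2) — that $\P_n$ of a Segal group is again a Segal group up to level-wise weak equivalence, via $\P_n$ preserving finite products up to homotopy — is exactly the content the paper records separately in Remark~\ref{r:seg-up-to}.
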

\begin{proof}
Apply Lemma~\ref{l:general} to the Quillen adjunction
$$ \xymatrix@=13pt{
\sS_0 \ar[rr]<1ex>^(0.5){\Id} && \Seg \ar[ll]<1ex>_(0.5){\upvdash}^(0.5){\Id}
}$$
\end{proof}

\begin{rem}\label{r:seg-up-to}
Since $\P_n$ preserves Cartesian products up to weak equivalence we see that for any Segal group $A_\bullet$ the $n$-truncated object $\P_n(A_\bullet)$ is a Segal group up to level-wise weak equivalence.
\end{rem}

\begin{cor}\label{c:trunct-act}
Let $A_\bullet$ be an object in $\Seg$ and let $f:X_\bullet \lrar \iota(A_\bullet)$ be an object in $\l(\sS_{/\iota(A_\bullet)}\r)_{\Act}$. Then
\begin{enumerate}
\item
$f$ is $n$-truncated in $\l(\sS_{/\iota(A_\bullet)}\r)_{\Act}$ if and only if its fibrant replacement $f^{fib}$ in $\l(\sS_{/\iota(A_\bullet)}\r)_{\Act}$ is $n$-truncated when considered as an object in the slice Reedy model structure $\sS_{/\iota(A_\bullet)}$.
\item
The natural map
$$ \xymatrix{
X_\bullet \ar[rr]\ar[dr] && \P_n\l(f^{fib}\r) \ar[dl] \\
& \iota(A_\bullet) & \\
}$$
is an $n$-truncation in $\l(\sS_{/\iota(A_\bullet)}\r)_{\Act}$.
\end{enumerate}
\end{cor}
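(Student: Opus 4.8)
The plan is to apply Lemma~\ref{l:general} to a suitable Quillen adjunction relating $\l(\sS_{/\iota(A_\bullet)}\r)_{\Act}$ to a model category in which truncation is already understood, namely the left-Bousfield-localized slice injective model structure on $\sS^{\Del^{\op}}_{/\iota(A_\bullet)}$ discussed above (where the new weak equivalences are detected by the level-wise relative Postnikov functor $\P_n(-/\iota(A_\bullet))$, cofibrations are monomorphisms, and fibrant objects are the $n$-truncated Reedy-fibrant objects). Call this localized model category $\l(\sS_{/\iota(A_\bullet)}\r)^{\leq n}_{\reedy}$. Since both $\l(\sS_{/\iota(A_\bullet)}\r)_{\Act}$ and $\sS_{/\iota(A_\bullet)}$ have monomorphisms as cofibrations, and the weak equivalences of $\sS_{\equ}$ (hence of the slice $\l(\sS_{/\iota(A_\bullet)}\r)_{\Act}$) are detected on realizations while the Reedy weak equivalences are level-wise, the identity functor $\Id: \sS_{/\iota(A_\bullet)} \lrar \l(\sS_{/\iota(A_\bullet)}\r)_{\Act}$ is a left Quillen functor (it preserves cofibrations, and it preserves trivial cofibrations since a level-wise trivial cofibration is a realization-equivalence). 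Localizing the source then gives a left Quillen functor $\Id: \l(\sS_{/\iota(A_\bullet)}\r)^{\leq n}_{\reedy} \lrar \l(\sS_{/\iota(A_\bullet)}\r)_{\Act}$ with right adjoint $\Id$ in the other direction.

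With this adjunction in hand, I would verify the hypothesis of Lemma~\ref{l:general}: for every fibrant $X_\bullet \lrar \iota(A_\bullet)$ in $\l(\sS_{/\iota(A_\bullet)}\r)_{\Act}$ (that is, an equifibred Reedy fibration, hence a Segal group action by Corollary~\ref{c:char-2}), the derived counit $\Id((\Id(X_\bullet))^{\cof}) \lrar X_\bullet$ should be a weak equivalence in $\l(\sS_{/\iota(A_\bullet)}\r)_{\Act}$. Since every object of the slice Reedy model structure is already cofibrant (cofibrations are monomorphisms and $\emptyset \hrar X_\bullet$ is one), the derived counit is just the identity, so this hypothesis is automatic. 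Lemma~\ref{l:general}(1) then tells us that $X_\bullet \lrar \iota(A_\bullet)$ is $n$-truncated in $\l(\sS_{/\iota(A_\bullet)}\r)_{\Act}$ if and only if its fibrant replacement $f^{fib}$ is $n$-truncated in $\l(\sS_{/\iota(A_\bullet)}\r)^{\leq n}_{\reedy}$; and being $n$-truncated in the localized model category is, by construction of the localization, the same as being $n$-truncated as an object of the slice Reedy model structure $\sS_{/\iota(A_\bullet)}$. This gives part $(1)$.

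For part $(2)$, I would combine Lemma~\ref{l:general}(2) with the description of $n$-truncation maps in the localized model structure recalled just before this corollary: for any object $Y_\bullet \lrar \iota(A_\bullet)$ in $\sS^{\Del^{\op}}_{/\iota(A_\bullet)}$ the natural map $Y_\bullet \lrar \P_n(Y_\bullet/\iota(A_\bullet))$ is an $n$-truncation. Applying this to $Y_\bullet = f^{fib}$ (noting $f^{fib} \lrar \iota(A_\bullet)$ is a Reedy fibration since fibrations in $\sS_{\equ}$ are in particular Reedy fibrations, so $\P_n(f^{fib}/\iota(A_\bullet)) = \P_n(f^{fib})$ in the notation of the corollary, where $\P_n$ is applied level-wise), we get that $f^{fib} \lrar \P_n(f^{fib})$ is an $n$-truncation in $\sS_{/\iota(A_\bullet)}$, hence (again by the localization description) $\Id(X_\bullet^{fib}) \lrar \Id((\P_n(f^{fib}))^{fib})$ is an $n$-truncation in $\l(\sS_{/\iota(A_\bullet)}\r)^{\leq n}_{\reedy}$. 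Since $\P_n(f^{fib})$ is $n$-truncated in $\l(\sS_{/\iota(A_\bullet)}\r)_{\Act}$ by part $(1)$, Lemma~\ref{l:general}(2) yields that $X_\bullet \lrar \P_n(f^{fib})$ is an $n$-truncation in $\l(\sS_{/\iota(A_\bullet)}\r)_{\Act}$, as claimed.

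The main obstacle I anticipate is the bookkeeping around which model structure "$n$-truncated'' refers to at each stage, and in particular confirming cleanly that $n$-truncation in the localized Reedy model category coincides with $n$-truncation in the slice Reedy model category $\sS_{/\iota(A_\bullet)}$ itself — this rests on the standard fact that left Bousfield localization does not change derived mapping spaces into local (here, $n$-truncated and Reedy-fibrant) objects, together with the observation that $n$-truncated objects of $\sS_{/\iota(A_\bullet)}$ are precisely the $Q$-local ones for $Q = \P_n(-/\iota(A_\bullet))$, which was exactly the point of invoking Bielefeld's axioms above. Once that identification is in place, everything else is a formal application of Lemma~\ref{l:general}.
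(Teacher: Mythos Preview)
Your overall idea---to deduce the corollary from Lemma~\ref{l:general} applied to the identity adjunction between the slice Reedy model structure and $\l(\sS_{/\iota(A_\bullet)}\r)_{\Act}$---is exactly the paper's approach, and the verification that the derived counit is an equivalence (because every object is Reedy-cofibrant) is correct. The trouble is the extra step you insert: localizing the source at the $\P_n$-equivalences.

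The claim ``localizing the source then gives a left Quillen functor $\Id: \l(\sS_{/\iota(A_\bullet)}\r)^{\leq n}_{\reedy} \lrar \l(\sS_{/\iota(A_\bullet)}\r)_{\Act}$'' is false. Left Bousfield localization of the \emph{source} of a left Quillen functor enlarges the class of trivial cofibrations there, so to remain left Quillen the functor must send the new trivial cofibrations to trivial cofibrations in the target. Here that would require every $\P_n$-local equivalence which is a monomorphism to be a realization-equivalence, which plainly fails (e.g.\ the map $X_\bullet \lrar \P_n(X_\bullet/\iota(A_\bullet))$ itself). It is localization of the \emph{target} that is automatic, not of the source. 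There is a second, related problem: in the localized model category $\l(\sS_{/\iota(A_\bullet)}\r)^{\leq n}_{\reedy}$ \emph{every} object is $n$-truncated, since derived mapping spaces there are computed after replacing the target by an $n$-truncated fibrant object; so your identification ``$n$-truncated in the localized model category $=$ $n$-truncated in the slice Reedy model category'' cannot hold.

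The fix is simply to drop the localization: apply Lemma~\ref{l:general} directly to the Quillen adjunction
\[
\xymatrix@=13pt{
\sS_{/\iota(A_\bullet)} \ar[rr]<1ex>^(0.4){\Id} && \l(\sS_{/\iota(A_\bullet)}\r)_{\Act} \ar[ll]<1ex>_(0.6){\upvdash}^(0.6){\Id}
}
\]
with $\M$ the (unlocalized) slice Reedy model structure and $\N$ the $\Act$-model structure. Part~(1) of the lemma gives part~(1) of the corollary immediately. For part~(2), the discussion preceding the corollary shows that $f^{\fib} \lrar \P_n(f^{\fib})$ is an $n$-truncation in the slice Reedy model structure, and since $\P_n(f^{\fib})$ is $n$-truncated in $\l(\sS_{/\iota(A_\bullet)}\r)_{\Act}$ (by part~(1), as it is already Reedy-fibrant and level-wise $n$-truncated, hence its $\Act$-fibrant replacement is still level-wise $n$-truncated), Lemma~\ref{l:general}(2) yields the conclusion. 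This is precisely what the paper does.
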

\begin{proof}
Apply Lemma~\ref{l:general} to the Quillen adjunction
$$ \xymatrix@=13pt{
\sS_{/\iota(A_\bullet)} \ar[rr]<1ex>^(0.5){\Id} && \l(\sS_{/\iota(A_\bullet)}\r)_{\Act} \ar[ll]<1ex>_(0.5){\upvdash}^(0.5){\Id}
}$$
\end{proof}

\begin{thm}\label{t:act-truncation}
Let $A_\bullet$ be a Segal group and $p:X_\bullet \lrar A_\bullet$ a Segal group action. Then the map in $\displaystyle\mathop{\int}_{A_\bullet \in \Seg} \l(\SS_{/\iota(A_\bullet)}\r)_{\Act}$ determined by the diagram
$$ \xymatrix{
X_\bullet \ar[r]\ar[d] & \P_n(X_\bullet) \ar[d] \\
A_\bullet \ar[r]_(0.45){\tau_n} & \P_n(A_\bullet) \\
}$$
is an $n$-truncation.
\end{thm}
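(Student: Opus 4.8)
The plan is to apply the general truncation result for integral model categories, Proposition~\ref{p:trunct-int}(2), to the map $(\tau_n,\vphi):(A_\bullet,X_\bullet)\lrar(\P_n(A_\bullet),\P_n(X_\bullet))$. To do this I must verify three things: that the target $(\P_n(A_\bullet),\P_n(X_\bullet))$ is $n$-truncated in the integral model category, that $\tau_n:A_\bullet\lrar\P_n(A_\bullet)$ is an $n$-truncation in $\Seg$, and that the induced map $(\tau_n)_!X_\bullet\lrar\P_n(X_\bullet)$ is an $n$-truncation in the fiber $\l(\sS_{/\iota(\P_n(A_\bullet))}\r)_{\Act}$. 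The first two are essentially already in hand: $\tau_n$ is an $n$-truncation in $\Seg$ by Corollary~\ref{c:trunct-seg}(2) (using that $A_\bullet$, being a Segal group, is already fibrant so $A_\bullet^{\fib}\simeq A_\bullet$), and $\P_n(A_\bullet)$ is $n$-truncated in $\Seg$ by Corollary~\ref{c:trunct-seg}(1), together with Remark~\ref{r:seg-up-to} which tells us $\P_n(A_\bullet)$ is still a Segal group up to levelwise equivalence.

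First I would address the fiberwise statement, which is the heart of the argument. By Corollary~\ref{c:trunct-act}, $n$-truncation in $\l(\sS_{/\iota(\P_n(A_\bullet))}\r)_{\Act}$ is detected by passing to a fibrant replacement and then testing $n$-truncation in the plain slice Reedy model structure $\sS_{/\iota(\P_n(A_\bullet))}$. So I need to show that the map $(\tau_n)_!X_\bullet\lrar\P_n(X_\bullet)$ over $\iota(\P_n(A_\bullet))$, after fibrant replacement in the $\Act$ model structure, becomes an $n$-truncation in $\sS_{/\iota(\P_n(A_\bullet))}$. Here I would use the explicit model of relative Postnikov pieces: since $X_\bullet\lrar\iota(A_\bullet)$ is a Segal group action, hence an equifibred Reedy fibration (Proposition~\ref{p:char}), the pushforward $(\tau_n)_!X_\bullet = X_\bullet\coprod_{\iota(A_\bullet)}\iota(\P_n(A_\bullet))$ computed homotopically is, fiberwise over $\iota(\P_n(A_\bullet))$, levelwise the same homotopy fibers as $X_\bullet$ over $\iota(A_\bullet)$ (the map $\iota(A_\bullet)\to\iota(\P_n(A_\bullet))$ is an iso on $\pi_{\le n}$, so does not change the relevant fibers up to $n$-truncation). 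Then $\P_n$ applied levelwise realizes exactly the relative $n$-truncation $\P_n(-/\iota(\P_n(A_\bullet)))$ in $\SS^{\Del^{\op}}_{/\iota(\P_n(A_\bullet))}$, using the levelwise equivalence $\P_n(X)\simeq\P_n(X/\P_n(A))$ over $\P_n(A)$ from Remark~\ref{r:relative}. Since the relative Postnikov functor on the functor-category slice is an $n$-truncation in the sense of Definition~\ref{d:truncated} (established in the discussion preceding this theorem via Biedermann's axioms and left Bousfield localization), the fiberwise claim follows.

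Then the target $(\P_n(A_\bullet),\P_n(X_\bullet))$ is $n$-truncated in the integral model category by Proposition~\ref{p:trunct-int}(1): the base $\P_n(A_\bullet)$ is $n$-truncated in $\Seg$ by Corollary~\ref{c:trunct-seg}, and $\P_n(X_\bullet)$ is $n$-truncated in the fiber because (after fibrant replacement) it is the relative Postnikov $n$-piece, which is $n$-truncated in $\sS_{/\iota(\P_n(A_\bullet))}$ by the discussion around Remark~\ref{r:relative} and Corollary~\ref{c:trunct-act}(1). With all three hypotheses of Proposition~\ref{p:trunct-int}(2) verified, we conclude that $(\tau_n,\vphi)$ is an $n$-truncation in $\displaystyle\mathop{\int}_{A_\bullet\in\Seg}\l(\SS_{/\iota(A_\bullet)}\r)_{\Act}$.

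The main obstacle I anticipate is the bookkeeping in the fiberwise step: one must be careful that the homotopy pushforward $(\tau_n)_!$ in the $\Act$-localized slice model structure genuinely agrees, up to the relevant fibrant replacement, with "levelwise relative Postnikov piece", and that $\P_n$ interacts correctly with the change of base along the non-equivalence $\iota(A_\bullet)\to\iota(\P_n(A_\bullet))$. This is where the equifibred condition and the identification in Remark~\ref{r:relative} do the real work, and where a careless argument could silently assume $\tau_n$ is an equivalence. Everything else is an assembly of results already established in the excerpt.
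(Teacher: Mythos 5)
Your overall skeleton coincides with the paper's: both arguments feed Corollary~\ref{c:trunct-seg} (for the base) and Corollary~\ref{c:trunct-act} (for the fiber) into Proposition~\ref{p:trunct-int}(2). The gap is in the fiberwise step, which you correctly single out as the heart of the matter but then resolve with an argument that does not work as stated. A first slip: for the slice functor $A_\bullet\mapsto\sS_{/\iota(A_\bullet)}$ the left Quillen functor $(\tau_n)_!$ is post-composition with $\iota(\tau_n)$, i.e.\ $(\tau_n)_!X_\bullet$ is just $X_\bullet\lrar\iota(\P_n(A_\bullet))$, not the pushout $X_\bullet\coprod_{\iota(A_\bullet)}\iota(\P_n(A_\bullet))$. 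More importantly, Corollary~\ref{c:trunct-act} forces you to pass to a fibrant replacement $Y_\bullet\lrar\iota(\P_n(A_\bullet))$ of this object in the $\Act$ (equifibred, realization-local) model structure, and this replacement is not a levelwise operation: its zeroth space is equivalent to the homotopy fiber of $|X_\bullet|\lrar|\P_n(A_\bullet)|$, which differs from $X_0$ and from any levelwise fiber of $X_k$ over $\P_n(A)_k$. What must actually be proved is that $X_k\lrar Y_k$ is an isomorphism on $\pi_i$ for all $i\le n$, so that $\P_n(Y_\bullet/\P_n(A_\bullet))\simeq\P_n(X_\bullet)$ over $\P_n(A_\bullet)$ (via Remark~\ref{r:relative}); your levelwise comparison of fibers of $X_\bullet$ over $\iota(A_\bullet)$ versus over $\iota(\P_n(A_\bullet))$ never engages with this object.

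Moreover, the quantitative input you invoke is off by one degree. The levelwise homotopy fibers of $A_k\lrar\P_n(A)_k$ are only $n$-connected, so the associated long exact sequences give agreement of the relevant fibers only through $\pi_{n-1}$, with a possible failure of injectivity on $\pi_n$; ``iso on $\pi_{\le n}$ of the base, hence the same fibers up to $n$-truncation'' is not a valid deduction. The paper recovers exactly this missing degree by working with realizations: writing $\WW_n(A_\bullet)$ for the fiber of $A_\bullet\lrar\P_n(A_\bullet)$ in $\Seg$, it uses Lemma~\ref{l:wierd} to produce a homotopy fibration sequence $X_0\lrar Y_0'\lrar|\WW_n(A_\bullet)|$ (with $Y_0'\simeq Y_0$ the fiber of $|X_\bullet|\lrar|\P_n(A_\bullet)|$), and then uses that $\WW_n(A_\bullet)$ is a levelwise $n$-connected Segal group, so that its realization is $(n+1)$-connected by Segal's Proposition 1.5(a); this extra degree of connectivity, coming from the delooping, is precisely what yields the isomorphism on $\pi_n$ as well. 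The Segal-action axioms are also needed to reduce the comparison $X_\bullet\lrar Y_\bullet$ to simplicial level $0$. None of these ingredients appears in your sketch, so the fiberwise step is a genuine gap rather than bookkeeping; the remaining assembly (truncation of the base, $n$-truncatedness of the target) is fine.
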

\begin{proof}
According to Corollary~\ref{c:trunct-seg}, the map $\tau_n:A_\bullet \lrar \P_n(A_\bullet)$ is an $n$-truncation in $\Seg$. The object $(\tau_n)_!(X_\bullet) \in \l(\SS_{/\iota(A_\bullet)}\r)_{\Act}$ is simply given by the composed map
$$ \tau_n \circ p: X_\bullet \lrar \P_n(A_\bullet) $$
Let 
$$ q: Y_\bullet \lrar \P_n(A_\bullet) $$
be a fibrant replacement of $\tau_n \circ p$ in $\l(\SS_{/\iota(A_\bullet)}\r)_{\Act}$, so that we have a commutative diagram
$$ \xymatrix{
X_\bullet \ar[d]\ar@{=}[r] & X_\bullet \ar[r]\ar[d] & Y_\bullet \ar[d] \\
A_\bullet \ar[r] &\P_n(A_\bullet) \ar@{=}[r] & \P_n(A_\bullet) \\
}$$
Combining Corollary~\ref{c:trunct-act} and Proposition~\ref{p:trunct-int} we see that the square
\begin{equation}\label{e:postnikov} 
\xymatrix{
X_\bullet \ar[r]\ar[d] & \P_n(Y_\bullet/\P_n(A_\bullet)) \ar[d] \\
A_\bullet \ar[r] & \P_n(A_\bullet) \\
}
\end{equation}
determines an $n$-truncation map in $\displaystyle\mathop{\int}_{A_\bullet \in \Seg} \l(\SS_{/\iota(A_\bullet)}\r)_{\Act}$. 

In light of Remark~\ref{r:relative} we know that $\P_n(Y_\bullet/\P_n(A_\bullet)) \simeq \P_n(Y_\bullet)$ over $\P_n(A_\bullet)$. Hence in order to prove the theorem we need to show that the map 
$$ \P_n(X_\bullet) \lrar \P_n(Y_\bullet) $$
is an equivalence of simplicial spaces. In other words, we need to show that the maps
$$ X_k \lrar Y_k $$
induce an isomorphism on homotopy groups up to dimension $n$. Since both $X_\bullet \lrar A_\bullet$ and $Y_\bullet \lrar \P_n(A_\bullet)$ are Segal group actions and since the map $A_\bullet \lrar \P_n(A_\bullet)$ is an isomorphism on homotopy groups up to dimension $n$ we see that it will be enough to show that the map
$$ X_0 \lrar Y_0 $$
induces an isomorphism on homotopy groups up to dimension $n$.

Since the map $X_\bullet \lrar Y_\bullet$ is a weak equivalence in $\sS_{/\iota(\P_n(A_\bullet))}$ we see that the induced map
$$ |X_\bullet| \lrar |Y_\bullet| $$
is a weak equivalence over $|\P_n(A_\bullet)|$, and hence the homotopy fiber $Y_0'$ of $|X_\bullet| \lrar |\P_n(A_\bullet)|$ is naturally equivalent to $Y_0$. It will hence suffice to show that the natural map
$$ X_0 \lrar Y_0' $$
induces an isomorphism on homotopy groups up to dimension $n$.

Let $\WW_n(A_\bullet) \lrar A_\bullet$ be the homotopy fiber of the map $A_\bullet \lrar \P_n(A_\bullet)$ in $\Seg$ (this coincides with the level-wise homotopy fiber of the corresponding map of reduced simplicial spaces, since both $A_\bullet$ and $\P_n(A_\bullet)$ are Segal groups up to Reedy fibrancy). Since the realization functor $|-|: \Seg \lrar \SS_0$ is a left Quillen equivalence we see that the sequence
$$ |\WW_n(A_\bullet)| \lrar |A_\bullet| \lrar |\P_n(A_\bullet)| $$
is again a homotopy fibration sequence. Now consider the diagram
$$ \xymatrix{
X_0 \ar[r]\ar[d] & Y_0' \ar[r]\ar[d] & |W_n(A_\bullet)| \ar[d] \\
|X_\bullet| \ar[d]\ar@{=}[r] & |X_\bullet| \ar[d]\ar[r] & |A_\bullet| \ar[d] \\
|A_\bullet| \ar[r] & |\P_n(A_\bullet)| \ar@{=}[r] & |\P_n(A_\bullet)| \\
}
$$
where the right top horizontal map is the induced map on homotopy fibers. According to Lemma~\ref{l:wierd} below, the sequence
$$ X_0 \lrar Y_0' \lrar |\WW_n(A_\bullet)| $$
is a homotopy fibration sequence. Since $\WW_n(A_\bullet)$ is levelwise $n$-connected (i.e., has no homotopy groups in dimension $\leq n$), and is a Segal group up to Reedy fibrancy, we conclude that $|\WW_n(A_\bullet)|$ is $(n+1)$-connected (see~\cite[Proposition 1.5(a)]{Seg}). The desired result now follows from the long exact sequence in homotopy groups.

\end{proof}

\begin{lem}\label{l:wierd}
Consider a commutative diagram of pointed spaces of the form
$$ \xymatrix{
X \ar^{f}[d]\ar@{=}[r] & X \ar[d]\ar[r] & A \ar^{g}[d] \\
A \ar^{g}[r] & P \ar@{=}[r] & P \\
}
$$
Then the sequence
$$ C \lrar D \lrar E $$
obtained by passing to homotopy fibers of the vertical maps is itself a homotopy fibration sequence.
\end{lem}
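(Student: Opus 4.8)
The plan is to identify the homotopy fibers $C$, $D$, $E$ directly and then exhibit the sequence $C \to D \to E$ as a homotopy fibration sequence by a cube-chasing argument. First I would set up notation: write the left-hand vertical map as $f: X \to A$, and the two right-most vertical maps as $g: A \to P$ (appearing twice). The three vertical maps of the diagram are $f: X \to A$, then $g \circ f: X \to P$ (the middle column, which is the composite $X \to X \to A \to P$... more precisely the middle column is $X \to P$ given by $g\circ f$, since the top middle map is $\mathrm{id}_X$ composed with... wait — re-reading: the top row is $X = X \to A$ and the bottom row is $A \to P = P$, so the middle vertical map is $X \to P$ which equals both the composite along the top-then-right and the left-then-bottom, i.e. it is $g\circ f$). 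Thus $C = \fib(f)$, $D = \fib(g\circ f)$, and $E = \fib(g)$.

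With these identifications the claim becomes the standard fact that, for a composable pair $X \xrightarrow{f} A \xrightarrow{g} P$ of pointed maps, there is a homotopy fibration sequence
$$ \fib(f) \lrar \fib(g\circ f) \lrar \fib(g). $$
The cleanest way to see this is to replace $g$ by a fibration $\bar{g}: \bar{A} \to P$ and $f$ correspondingly, so that all fibers become strict fibers, and then observe that $\fib(g\circ f)$ is the pullback $X \times_P \mathrm{PE}$ where $\mathrm{PE} \to P$ is the path fibration; restricting along the inclusion $\fib(g) = \bar{g}^{-1}(*) \hookrightarrow \bar{A}$ exhibits $\fib(f)$ as the fiber of $\fib(g\circ f) \to \fib(g)$ over the basepoint. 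Equivalently, one can argue purely diagrammatically: the outer square of the given diagram is
$$ \xymatrix{ X \ar[r]\ar[d]_{f} & A \ar[d]^{g} \\ A \ar[r]_{g} & P \\ } $$
and passing to horizontal homotopy fibers of this square — once using the top and bottom rows as written, once regarding it as built from the two smaller squares — gives two compatible ways of computing a common homotopy fiber; the pasting lemma for homotopy Cartesian squares then yields the desired fibration sequence on vertical homotopy fibers. I would phrase the proof using the latter, purely formal route, since it avoids choosing explicit models.

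Concretely, the key steps in order are: (i) identify the middle vertical map as $g \circ f$ and hence $C = \fib(f)$, $D = \fib(g\circ f)$, $E = \fib(g)$; (ii) factor $g$ as a trivial cofibration followed by a fibration $A \xrightarrow{\sim} \bar A \xrightarrow{\bar g} P$, and pull back along $f$ to replace the whole diagram by a weakly equivalent one in which every vertical map is a fibration, so that all homotopy fibers are computed strictly; (iii) observe that in the replaced diagram $D = X \times_A \bar A \times_P \{*\}$, that $E = \bar g^{-1}(*)$, and that the map $D \to E$ is the projection whose strict fiber over the basepoint is exactly $C = f^{-1}(*)$; (iv) conclude that $C \to D \to E$ is a fiber sequence of fibrations, hence a homotopy fibration sequence. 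The only mild subtlety — and the step I expect to require the most care — is step (ii)/(iii): making sure the fibrant replacements of the several copies of $A$ and $P$ appearing in the diagram are chosen compatibly so that the three columns really do assemble into a single tower of fibrations, rather than merely three separate fibrations. This is routine but must be done honestly; once the diagram is strictified, the identification of the fiber of $D \to E$ with $C$ is immediate from the definition of pullback.
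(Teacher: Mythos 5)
Your proposal is correct and takes essentially the same route as the paper: after identifying the middle vertical map as $g\circ f$, the paper simply assumes without loss of generality that both $f$ and $g$ are Kan fibrations, identifies $D$ with the strict pullback $E\times_A X$, and observes that $D\to E$ is then a fibration (a base change of $f$) with fibre $C=f^{-1}(*)$, which matches your steps (i)--(iv). The only difference is cosmetic: where you factor $g$ and worry about choosing the replacements of the various copies of $A$ and $P$ compatibly, the paper performs your step (ii) in one stroke by replacing $f$ and $g$ simultaneously by fibrations, after which your identification of the fibre of $D\to E$ with $C$ is exactly the paper's conclusion.
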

\begin{proof}
We may assume without loss of generality that the maps $f:X \lrar A$ and $g:A \lrar P$ are Kan fibrations (which means that $h = g \circ f$ is a Kan fibration as well), so that the homotopy fibers $C,D$ coincide with the actual fibers $C = f^{-1}(*), D = h^{-1}$ and $E = g^{-1}(*)$. It is then clear that the natural map
$$ D \lrar E \times_A X $$
is an isomorphism. Furthermore, since the map $X \lrar A$ is a fibration we see that the fiber product on the right hand side coincides with the associated homotopy fiber product $E \times^h_A X$. It is then clear that the resulting sequence
$$ C \simeq *\times^h_A X \lrar E \times^h_A X \lrar E $$
is a homotopy fibration sequence.
\end{proof}

\begin{rem}\label{r:act-up-to}
Since $\P_n$ preserves Cartesian products up to weak equivalence we see that for any Segal group action $f:X_\bullet \lrar A_\bullet$ the $n$-truncated object $\P_n(X_\bullet) \lrar \P_n(A_\bullet)$ is a Segal group action up to levelwise weak equivalence (see Remark~\ref{r:seg-up-to}).
\end{rem}

\subsection{Convergence of the Postnikov tower}\label{ss:conv}


Given a simplicial group $G$  we will denote by $G \racts X$ a space $X$ equipped with a strict action of $G$. Similarly, given a Segal group $A_\bullet$ we will denote a Segal group action of the form
$X_\bullet \lrar A_\bullet$ by $A_1\overset{h}{\racts} X_0$. The latter notation comes from viewing a Segal group action as coherent action of the loop space $A_1\simeq \Om |A_\bullet|$ on the space $X_0$. 
Recall the Quillen equivalence
$$
\xymatrix{\Lambda^L: \displaystyle\mathop{\int}_{A_\bullet \in \l(\sS_0\r)_{\segal}}\l(\sS_{/\iota(X_\bullet )}\r)_{\act} \ar[r]<1ex> & \displaystyle\mathop{\int}_{G \in \sGr}\SS^{\B G}:\Lambda^R\ar[l]<1ex>_(0.4){\upvdash}}
$$
established in Corollary~\ref{c:equivalence-3}. For a $G$-space $X$ we will denote its corresponding Segal group action by 
$$ B(G,X)_\bullet\lrar B(G)_\bullet \x{\df}{=} \mathbb{R}\Lambda^R(G,X).$$


Now suppose $X$ is a $G$-space and consider its Segal group action 
$$B(G,X)_\bullet\lrar B(G)_\bullet.$$ 
Using Theorem~\ref{t:act-truncation} and Remark~\ref{r:act-up-to} we obtain a tower of truncated Segal group actions

\begin{equation}\label{e:segal postnikov} 
\xymatrix{ \\& \x{\hspace{ 20pt}\vdots}{^{P_nG \x{\text{\tiny h}}{\racts} }P_nX}\ar[d]<2ex>\\&\hspace{ 19pt}\vdots\ar[d]<2ex>\\&^{P_1G \x{\text{\tiny h}}{\racts} }P_1X\ar[d]<2ex>\\  ^{G\x{\text{\tiny h}}{\racts}}X\ar[r]<-1.5ex>\ar[ur]<-1.5ex>\ar[uuur]<-1.5ex> & ^{P_0G \x{\text{\tiny h}}{\racts} }P_0X}
\end{equation}
in which the maps are maps between Segal group actions.



Let us first observe that the Quillen equivalence of Corollary \ref{c:equivalence-3} induces a Quillen equivalence

$$ \xymatrix@=13pt{
\left(\displaystyle\mathop{\int}_{A_\bullet \in \sS_0} \sS_{/\iota(A_\bullet)} \right)^{\NN^{\op}}\ar[rr]<1.2ex>_(0.54){\upvdash}  && 
\left(\displaystyle\mathop{\int}_{G \in \sGr} \SS^{\B G}\right)^{\NN^{\op}} \ar[ll]<1.2ex> \\
}
$$
where $\NN$ is the poset of natural numbers and where the model structure we use on the diagram categories is the Reedy model structure. Applying the (derived) left Quillen functor above on the tower \ref{e:segal postnikov}, will now yield a tower of (ordinary) group actions 

\begin{equation}\label{e:borel postnikov} 
\xymatrix{ \\& \x{\hspace{ 20pt}\vdots}{^{G_n \racts }X_n}\ar[d]<2ex>\\&\hspace{ 19pt}\vdots\ar[d]<2ex>\\&^{G_1 \racts }X_1\ar[d]<2ex>\\  ^{G\racts}X\ar[r]<-1.5ex>\ar[ur]<-1.5ex>\ar[uuur]<-1.5ex> & ^{G_0 \racts }X_0}
\end{equation}
where $G_n\simeq \P_nG$ and $X_n\simeq \P_nX$.
This means that the coherent tower \ref{e:segal postnikov} and the strict tower \ref{e:borel postnikov} contain the same homotopy-theoretical information. 

Recall that the tower \ref{e:segal postnikov} can be built for any Segal group action $X_\bullet \lrar A_\bullet$ without reference to a strict group action $G\racts X$. 
We now claim that
\begin{thm}\label{t:converge}
For any Segal group action $X_\bullet \lrar A_\bullet$, viewed as an object in $$\displaystyle\mathop{\int}_{A_\bullet \in \l(\sS_0\r)_{\segal}} \l(\sS_{/\iota(A_\bullet)}\r)_{\act},$$ the tower of \ref{e:segal postnikov} converges in that $$(X_\bullet\lrar A_\bullet)\simeq \holim_n  (\P_nX_\bullet\lrar \P_nA_\bullet).$$   
\end{thm}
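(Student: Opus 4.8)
The plan is to reduce the convergence statement for Segal group actions, which is an assertion about a homotopy limit in the integral model category $\int_{A_\bullet} (\sS_{/\iota(A_\bullet)})_{\act}$, to two separate convergence statements: one in $\Seg$ for the base and one in the fibers. First I would observe that, by Proposition~\ref{p:trunct-int}(2), the map $X_\bullet \lrar \P_n(X_\bullet)$ over $A_\bullet \lrar \P_n(A_\bullet)$ is an $n$-truncation in the integral model category (this is essentially Theorem~\ref{t:act-truncation}), so the tower in~\ref{e:segal postnikov} really is the Postnikov tower of $(X_\bullet \lrar A_\bullet)$ in the sense of Definition~\ref{d:truncated}. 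The content of the theorem is then that this Postnikov tower converges, i.e.\ that the canonical map $(X_\bullet \lrar A_\bullet) \lrar \holim_n \P_n(X_\bullet \lrar A_\bullet)$ is a weak equivalence in the integral model structure.

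Next I would use the characterization of weak equivalences in the integral model structure together with the behaviour of homotopy limits. A homotopy limit in $\int_\M \F$ over a diagram all of whose structure maps are fibrations can be computed by taking the homotopy limit of the bases in $\M$ and, over that, the homotopy limit of the fibers; more precisely, since $\pi : \int_\M \F \lrar \M$ is a right Quillen functor (\cite[Corollary 5.8]{HP}), it preserves homotopy limits, so $\pi(\holim_n \P_n(X_\bullet \lrar A_\bullet)) \simeq \holim_n \P_n(A_\bullet)$, and the fiber of the homotopy limit over the (contractible-to-$A_\bullet$) cone point is the homotopy limit of the fibers. Thus it suffices to check two things: (a) the map $A_\bullet \lrar \holim_n \P_n(A_\bullet)$ is a weak equivalence in $\Seg$; and (b) after pulling everything back along this equivalence, the map of fibers $X_\bullet \lrar \holim_n (\text{fiber of } \P_n(X_\bullet) \to \P_n(A_\bullet))$ is a weak equivalence in $(\sS_{/\iota(A_\bullet)})_{\act}$, equivalently (passing to realizations, using Theorem~\ref{t:rezk}(2) and Corollary~\ref{c:qe-reduced-equiv}) a weak equivalence of the underlying spaces $X_0 \lrar \holim_n (\text{relevant fiber})$.

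For (a): by Corollary~\ref{c:realization} a map in $\Seg$ is a weak equivalence iff its realization is one in $\SS_0$, and $|-| : \Seg \lrar \SS_0$ is a left Quillen equivalence (Corollary~\ref{c:qe-reduced-equiv}), so the question becomes whether $|A_\bullet| \lrar \holim_n |\P_n(A_\bullet)| \simeq \holim_n \P_n |A_\bullet|$ is a weak equivalence of reduced spaces — here I use Remark~\ref{r:seg-up-to} and the fact (Remark~\ref{r:easy-side}/Theorem~\ref{t:segal}) that realization takes level-wise Postnikov truncations of Segal groups to Postnikov truncations of the realization. This is precisely the convergence of the classical Postnikov tower of a (connected) space, which is standard. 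For (b): the same reduction via realization turns the fiber statement into convergence of the Postnikov tower of $X_0$ (using Lemma~\ref{l:wierd}-type fibration-sequence arguments exactly as in the proof of Theorem~\ref{t:act-truncation} to identify the fibers of $\P_n(X_\bullet) \to \P_n(A_\bullet)$ with the Postnikov pieces of $X_0$ twisted appropriately), which again reduces to the classical convergence theorem for spaces.

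The main obstacle I anticipate is justifying the interchange of the homotopy limit with the fiber/realization functors in the integral model category — that is, making precise the claim that a homotopy limit of a tower in $\int_\M \F$ is computed "base-wise and then fiber-wise," and that realization (which is a left adjoint, hence a priori only commutes with homotopy colimits) nonetheless commutes with these particular homotopy limits. The cleanest route is probably to pass to the underlying $\infty$-categories, invoke the identification $(\int_\M \F)_\infty \simeq \int_{\M_\infty} \F_\infty$ from~\cite[Proposition 3.10]{HP} used already in Proposition~\ref{p:ses}, and use that limits in a coCartesian fibration $\int_{\M_\infty} \F_\infty \to \M_\infty$ are computed as a limit in the base together with a limit in the (stabilized) fiber; then the realization equivalences of Theorem~\ref{t:rezk}(2) and Corollary~\ref{c:qe-reduced-equiv} are equivalences of $\infty$-categories and automatically commute with all limits. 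Once this bookkeeping is in place, the remaining content is entirely the classical fact that Postnikov towers of connected spaces converge, applied to $|A_\bullet|$ and to $X_0$.
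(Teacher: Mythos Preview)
Your overall strategy --- split the homotopy limit into a base computation in $\Seg$ and a fiber computation over $A_\bullet$ --- is exactly the paper's. The difference is in how each half is carried out.

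For the base, you route through the realization functor and then worry (correctly) that $|-|$ is a left adjoint and need not commute with $\holim$; your proposed fix via the underlying $\infty$-equivalence is sound but heavy. The paper sidesteps this entirely: since each $\P_n(A_\bullet)$ is a Segal group up to level-wise equivalence (Remark~\ref{r:seg-up-to}), the tower consists of fibrant objects in $\Seg$, and the homotopy limit may be computed degree-wise in $\sS_0$. One then has $\holim_n \P_n(A_k) \simeq A_k$ by classical convergence for the space $A_k$, so $\holim_n \P_n(A_\bullet) \simeq A_\bullet$ with no realization in sight. A similar remark applies to the fiber: rather than invoking Lemma~\ref{l:wierd} and realizations, the paper pulls back $\P_n(X_\bullet) \to \P_n(A_\bullet)$ along $A_\bullet \to \P_n(A_\bullet)$ \emph{level-wise}, and the Segal group action axioms identify the result as $L^{(n)}_k \simeq \P_n(X_0) \times A_k$. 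The fiber holim then reduces immediately to $\holim_n \P_n(X_0) \simeq X_0$.

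In short: both proofs reduce to classical Postnikov convergence for spaces, but the paper exploits the product structure of Segal groups and Segal group actions to stay entirely within level-wise computations, whereas your route through $|-|$ forces you to confront a limit/left-adjoint commutation that the paper never needs to address. Your approach is not wrong, and the $\infty$-categorical justification you sketch does close the gap; it is simply less direct. One small caution: the identification $|\P_n(A_\bullet)| \simeq \P_n|A_\bullet|$ that you use in step~(a) is off by one (since $\pi_k|A_\bullet| \cong \pi_{k-1}A_1$), though this is harmless for the convergence argument as the two towers are cofinal.
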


\begin{proof}
Since each $\P_n(A_\bullet)$ is a Segal group up to weak equivalence (see Remark~\ref{r:seg-up-to}), we can compute the homotopy limit of $\{\P_n(A_\bullet)\}$ separately in each simplicial degree, yielding

$$ \holim_n \P_nA_\bullet\simeq A_\bullet $$ 
in $\l(\sS_0\r)_{\segal}$. Thus, in order to compute $\holim_n  (\P_nX_\bullet\lrar \P_nA_\bullet)$ in $$\displaystyle\mathop{\int}_{A_\bullet \in \l(\sS_0\r)_{\segal}} \l(\sS_{/\iota A_\bullet}\r)_{\act}$$ we first pull back each $$\P_nX_\bullet\lrar \P_nA_\bullet$$ to the fiber $\sS_{/\iota(A_\bullet)}$ over $A_\bullet$ and compute the homotopy limit there.

For a fixed $n$, the homotopy pullback 
$$\xymatrix{
L_\bullet^{(n)} \ar[r]\ar[d] & \P_nX_\bullet\ar[d]\\ 
A_\bullet\ar[r] & \P_nA_\bullet
}$$  is taken separately in each simplicial degree and by the axioms of a Segal group action we get $$L_k^{(n)}\simeq \P_nX_0\times A_k.$$ We then obtain natural maps 
$$X_\bullet\lrar L_\bullet^{(n)}$$ 
(over $A_\bullet$) inducing a map $X_\bullet\lrar \holim_n L_\bullet^{(n)}$ of Segal group actions over $A_\bullet$. This map is an equivalence in each simplicial degree since $\holim\P_nX_0\simeq X_0$ and hence an equivalence of Segal group actions over $A_\bullet$. The result now follows.
\end{proof}
By the Quillen equivalence of Corollary~\ref{c:equivalence-3}, we get
\begin{cor}
For any simplicial group $G$ and any $G$-space $X$, the tower \ref{e:borel postnikov} converges in that $\holim_n (G_n\racts X_n)\simeq G\racts X$.
\end{cor}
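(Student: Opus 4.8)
The plan is to transport the convergence statement of Theorem~\ref{t:converge} along the Quillen equivalence of Corollary~\ref{c:equivalence-3}. First I would promote $\Lambda^L \dashv \Lambda^R$ to a Quillen equivalence between the Reedy model structures on the categories of $\NN^{\op}$-indexed diagrams, as indicated in the discussion preceding diagram~\ref{e:borel postnikov}. Since one half of a Quillen equivalence already induces an equivalence of underlying $\infty$-categories, the resulting derived functors $\mathbb{L}\Lambda^L$ and $\mathbb{R}\Lambda^R$ on these diagram categories are mutually inverse equivalences, and in particular $\mathbb{L}\Lambda^L$ preserves homotopy limits of $\NN^{\op}$-towers.

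Next I would match up the two towers and the two objects. By construction $B(G,X)_\bullet \lrar B(G)_\bullet = \mathbb{R}\Lambda^R(G,X)$, so $\mathbb{L}\Lambda^L$ carries this Segal group action back to $G \racts X$. Feeding $B(G,X)_\bullet \lrar B(G)_\bullet$ into Theorem~\ref{t:act-truncation} and Remark~\ref{r:act-up-to} produces exactly the coherent Postnikov tower~\ref{e:segal postnikov}, and, as already recorded in the text, $\mathbb{L}\Lambda^L$ sends it to the strict tower~\ref{e:borel postnikov} with $G_n \simeq \P_nG$ and $X_n \simeq \P_nX$. Theorem~\ref{t:converge}, applied to the Segal group action $B(G,X)_\bullet \lrar B(G)_\bullet$, then gives
$$ \holim_n \left(\P_n B(G,X)_\bullet \lrar \P_n B(G)_\bullet\right) \simeq \left(B(G,X)_\bullet \lrar B(G)_\bullet\right), $$
and applying $\mathbb{L}\Lambda^L$ to both sides, using that it commutes with $\holim_n$, yields $\holim_n (G_n \racts X_n) \simeq G \racts X$, which is the assertion.

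The only genuinely non-formal point I expect is the commutation of this derived equivalence with $\NN^{\op}$-indexed homotopy limits. This is where I would be careful, but it follows from the standard facts that a Quillen equivalence induces an equivalence of the associated $\infty$-categories (or of homotopy categories enriched in spaces) and that the Reedy model structure on $\M^{\NN^{\op}}$ computes homotopy limits correctly; all remaining steps amount to bookkeeping that has essentially been carried out in the preceding discussion.
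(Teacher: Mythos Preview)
Your proposal is correct and follows exactly the approach the paper takes: the paper's proof is the single sentence ``By the Quillen equivalence of Corollary~\ref{c:equivalence-3}, we get'', and you have simply unpacked what that sentence means, including the point that a derived Quillen equivalence commutes with homotopy limits of towers. There is nothing to add.
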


\end{document}